\theoremstyle{plain} 
\newtheorem{theorem}[equation]{Theorem}
\newtheorem{corollary}[equation]{Corollary}
\newtheorem{lemma}[equation]{Lemma}
\newtheorem{proposition}[equation]{Proposition}
\newtheorem{conjecture}[equation]{Conjecture}
\theoremstyle{definition}
\newtheorem{definition}[equation]{Definition}
\newtheorem{numerotation}[equation]{}
\theoremstyle{remark}
\newtheorem{remark}[equation]{Remark}
\newtheorem{example}[equation]{Example}
\newtheorem{propriete}[equation]{Property}
\newcommand{\Character}{\delta}
\def\build#1_#2^#3{\mathrel{\mathop{\kern0pt#1}\limits_{#2}^{#3}}}
\begin{document}
\begin{abstract}
Let $H$ be a Hopf algebra with a modular pair in involution $(\Character,1)$.
Let $A$ be a (module) algebra over $H$
equipped with a non-degenerated $\Character$-invariant $1$-trace $\tau$.
We show that Connes-Moscovici characteristic map
$\varphi_\tau:HC^*_{(\Character,1)}(H)\rightarrow HC^*_\lambda(A)$
is a morphism of graded Lie algebras.
We also have a morphism $\Phi$ of Batalin-Vilkovisky algebras
from the cotorsion product of $H$, $\text{Cotor}_H^*({\Bbbk},{\Bbbk})$,
to the Hochschild cohomology of $A$, $HH^*(A,A)$.
Let $K$ be both a Hopf algebra and a symmetric Frobenius algebra.
Suppose that the square of its antipode is an inner automorphism by a group-like element.
Then this morphism of Batalin-Vilkovisky algebras
$\Phi:\text{Cotor}_{K^\vee}^*(\mathbb{F},\mathbb{F})\cong \text{Ext}_{K}(\mathbb{F},\mathbb{F})
\hookrightarrow HH^*(K,K)$
is injective.
\end{abstract}
\title{\bf Connes-Moscovici characteristic map is a Lie algebra morphism}
\author{Luc Menichi}
\address{UMR 6093 associ\'ee au CNRS\\
Universit\'e d'Angers, Facult\'e des Sciences\\
2 Boulevard Lavoisier\\49045 Angers, FRANCE}
\email{firstname.lastname at univ-angers.fr}
\keywords{Batalin-Vilkovisky algebra, Hochschild cohomology, cyclic cohomology, Hopf algebra, Frobenius algebra}
\maketitle
\begin{center}
\end{center}
\section{Introduction}
Let ${\Bbbk}$ be any commutative ring and $\mathbb{F}$ be any field.
It is well known that the Hochschild cohomology of an algebra $A$, $HH^*(A,A)$,
is a Gerstenhaber algebra.
It is also well known that the homology of a double pointed loop space, $H_*(\Omega^2 X)$,
is also a Gerstenhaber algebra~\cite{Cohen-Lada-May:homiterloopspaces}.
Let $H$ be a bialgebra. It is not well known (See~\cite{Kadeishvili:cobarbialgebra}
for a recent paper rediscovering it) that the Cotorsion product of $H$,
 $\text{Cotor}^{*}_H(\Bbbk,\Bbbk)$
has a Gerstenhaber algebra structure
(this results from~\cite[p. 65]{Gerstenhaber-Schack:algbqgad}).
But it should. Indeed, by Adams cobar equivalence,
there is an isomorphism 
$\text{Cotor}^{*}_{S_*(\Omega X)}(\Bbbk,\Bbbk)\cong H_*(\Omega^2 X)$
between the two Gerstenhaber algebras (See the proof of Corollary~\ref{lacet double sous algebre
Gerstenhaber de Hochschild} for details).

The first goal of this paper is to study (Section~\ref{algebre de Gerstenhaber sur le produit exterieur})
this Gerstenhaber algebra
 $\text{Cotor}^{*}_H(\Bbbk,\Bbbk)$.
In particular, generalizing a result of Farinati and
Solotar~\cite{Farinati-Solotar:GstrcohHopf}, we show (Theorem~\ref{ext sous algebre de Gerstenhaber de Hochschild})
that the exterior product
$\text{Ext}^{*}_H(\Bbbk,\Bbbk)$ is a sub Gerstenhaber algebra
of the Hochschild cohomology of $H$, $HH^*(H,H)$.

In Section~\ref{algebres de Batalin-Vilkovisky}, we turn our attention to a particular
case of Gerstenhaber algebras: the Batalin-Vilkovisky algebras.
In~\cite{MenichiL:BValgaccoHa}, we introduced the notion of cyclic operad with multiplication
(Definition~\ref{operade cyclique avec multiplication}) and we showed
(Theorem~\ref{Theoreme BV algebre})
that every cyclic operad with multiplication $\mathcal{O}$ gives a cocyclic module
such that

-the homology of the associated cochain complex $H(\mathcal{C}^*(\mathcal{O}))$
is a Batalin-Vilkovisky algebra and

-the negative cyclic cohomology of $\mathcal{C}^*(\mathcal{O})$, $HC_-^*(\mathcal{O})$,
has a Lie bracket of degre $-2$.

Let $M$ be a simply-connected closed manifold.
In~\cite{Chas-Sullivan:stringtop}, Chas and Sullivan showed that $\mathbb{H}_*(LM)$,
the free loop space homology of $M$, is a Batalin-Vilkovisky algebra
and that the $S^1$-equivariant homology $H_*^{S^1}(LM)$ has a Lie bracket.
The singular cochains of $M$, $S^*(M)$ is a (derived) symmetric Frobenius algebra.
Motivated by Chas-Sullivan string topology, in~\cite{MenichiL:BValgaccoHa},
as first application of Theorem~\ref{Theoreme BV algebre}, we obtained that
the Hochschild cohomology of a symmetric Frobenius algebra $A$, $HH^*(A,A)$,
is a Batalin-Vilkovisky algebra and that the negative cyclic cohomology of $A$,
$HC_-^*(A)$ has a Lie bracket of degre $-2$.
It is expected that there is an isomorphism of Batalin-Vilkovisky algebras
$HH^*(S^*(M),S^*(M))\cong  \mathbb{H}_*(LM)$ and an isomorphism of Lie algebras
$HC_-^*(S^*(M))\cong H_*^{S^1}(LM)$.

In~\cite{Getzler:BVAlg}, Getzler showed that the Gerstenhaber algebra
$H_*(\Omega^2 X)$ is in fact a Batalin-Vilkovisky algebra.
Therefore as second application of Theorem~\ref{Theoreme BV algebre},
in~\cite{MenichiL:BValgaccoHa}, we showed that the Cotorsion product of a Hopf algebra
$H$ with an involutive antipode or more generally with a modular pair in involution
$(\Character,1)$, $\text{Cotor}^{*}_H(\Bbbk,\Bbbk)$, is a Batalin-Vilkovisky algebra.
In this paper, we give the dual result (Theorem~\ref{ext BV algebre})
which we believe is far more clear: Let $K$ be a Hopf algebra such that the square of
its antipode is an inner automorphism by a group-like element. Then
$\text{Ext}^{*}_K(\Bbbk,\Bbbk)$ is a Batalin-Vilkovisky algebra.

In~\cite{MenichiL:BValgaccoHa}, we also had that the negative cyclic cohomology of $H$,
$HC_{-(\Character,1)}^*(H)$ has a Lie bracket of degre $-2$.
But Connes and Moscovici never use negative cyclic cohomology: they use the
(ordinary) cyclic cohomology. Therefore, in this paper, we show
(Corollary~\ref{crochet sur Hopf cyclic cohomologie}) that
Connes-Moscovici (ordinary) cyclic cohomology of $H$,
$HC^*_{(\Character,1)}(H)$, has also a Lie bracket (of degree $-1$ this time)
and we show (Theorem~\ref{characteristic de Connes-Moscovici morphisme de Lie}
and its variant Theorem~\ref{characteristic de Khalkhali-Rangipour morphisme de Lie}) that Connes-Moscovici characteristic map
$\chi_\tau:HC_{(\Character,1)}^*(H)\rightarrow HC^*_\lambda(A)$
is compatible with the Lie brackets of degre $-1$.
Here $A$ is a symmetric Frobenius algebra equipped an action of the Hopf algebra $H$
compatible with the product and the trace.

In fact, we show that Connes-Moscovici characteristic map is induced by a morphism
of cyclic operads with multiplication from the cobar construction of $H$, $\Omega H$,
to the Hochschild cochain complex of $A$, $\mathcal{C}^*(A,A)$.
And we show that the (ordinary) cyclic cohomology of every cyclic operad with
multiplication has naturally a Lie bracket of degre $-1$
(Theorem~\ref{crochet sur cohomologie cyclique lambda}).
As a consequence of Theorem~\ref{Theoreme BV algebre}, we also obtain
a morphism of Batalin-Vilkovisky algebras
$H^*(\Phi):\text{Cotor}^*_H({\Bbbk},{\Bbbk})\rightarrow HH^*(A,A)$(Theorem~\ref{characteristic de Connes-Moscovici morphisme de Lie}).

Note that this morphism $H^*(\Phi)$ should be the algebraic counterpart
of our very recent morphism of
Batalin-Vilkovisky algebras~\cite[Theorem 24]{Menichi:BVmorphismdoublefree}
$$
\text{Cotor}_{S_*(G)}({\Bbbk},{\Bbbk})\cong H_*(\Omega^2 BG)\rightarrow\mathbb{H}_*(LM)
\cong HH^*(S^*(M),S^*(M))$$
between the Batalin-Vilkovisky algebra on the homology of double loop space
given by by Getzler~\cite{Getzler:BVAlg}, and the Batalin-Vilkovisky algebra on the free
loop space homology of a manifold given by Chas and Sullivan. Here $G$ is a topological group acting on $M$.

In Section 8, we specialize to the case where the symmetric Frobenius algebra
$A$ is the Hopf algebra $H$ itself. And we show that the inclusion of Gerstenhaber
algebras $\text{Ext}^{*}_H(\mathbb{F},\mathbb{F})\hookrightarrow HH^*(H,H)$,
given by Theorem~\ref{ext sous algebre de Gerstenhaber de Hochschild},
is often an inclusion of Batalin-Vilkovisky algebras
(Theorem~\ref{inclusion BV ext hochschild}).

In this last section, we compute the Batalin-Vilkovisky algebra structure on $\text{Cotor}_{H}({\Bbbk},{\Bbbk})$ introduced in~\cite[Theorem 1.1]{MenichiL:BValgaccoHa} and recalled
in Corollary~\ref{Cobar algebre de Hopf BV} when $H$ is the universal envelopping algebra
of a Lie algebra over a field of characteristic $0$.

{\em Acknowledgment:}
We wish to thank Yves F\'elix and Jean-Claude Thomas for interesting comments
on Corollary~\ref{lacet double sous algebre Gerstenhaber de Hochschild}.

\section{Hochschild complex and (Co)bar construction}
We work over an arbitrary commutative ring ${\Bbbk}$,
except for Conjectures~\ref{conjecture crochet trivial pour bialgebre tresse}
-\ref{conjecture ext 3-algebre} in 
Section~\ref{algebre de Gerstenhaber sur le produit exterieur},
for Proposition~\ref{dualite Connes-Moscovici Khalkhali-Rangipour}
to Corollary~\ref{dualite operad cyclic bar cobar}
(almost all Section~\ref{section ext BV-algebre})
and for all Section~\ref{algebre de Hopf symmetrique},
where we use  an arbitrary field $\mathbb{F}$ as coefficient.

Let $A$ be an algebra and $M$ be a $A$-bimodule.
The {\it Hochschild chain complex} $\mathcal{C}_*(A,M)$ is the chain complex
$\mathcal{C}_n(A,M)=M\otimes A^n$ with differential $d:\mathcal{C}_n(A,M)\rightarrow\mathcal{C}_{n-1}(A,M)$
given by
\begin{multline*}
d(m\otimes a_1\otimes\dots\otimes a_n)=ma_1\otimes a_2\otimes\dots\otimes a_n\\
+\sum_{i=1}^{n-1}(-1)^{i}m\otimes a_1\otimes\dots\otimes a_ia_{i+1}\otimes\dots\otimes a_n
+(-1)^n a_nm\otimes a_1\otimes\dots\otimes a_{n-1}.
\end{multline*}
By definition, the {\it Hochschild homology of $A$ with coefficients in $M$}, 
$HH_*(A,M)$ is the homology of $\mathcal{C}_*(A,M)$.
The {\it Hochschild cochain complex} $\mathcal{C}^*(A,M)$ is the cochain complex
$\mathcal{C}^n(A,M)=\text{Hom}(A^n,M)$ with differential $d:\mathcal{C}^n(A,M)\rightarrow\mathcal{C}^{n+1}(A,M)$
given by
\begin{multline*}
d(f)(a_0\otimes\dots\otimes a_n)=a_0f(a_1\otimes\dots\otimes a_n)\\
+\sum_{i=1}^{n}(-1)^{i}f(a_0\otimes\dots\otimes a_{i-1}a_{i}\otimes\dots\otimes a_n)
+(-1)^{n+1} f(a_0\otimes\dots\otimes a_{n-1}) a_n.
\end{multline*}
By definition, the {\it Hochschild cohomology of $A$ with coefficients in $M$}, 
$HH^*(A,M)$ is the homology of $\mathcal{C}^*(A,M)$.
Suppose that $A$ has an augmentation $\varepsilon:A\rightarrow {\Bbbk}$. Then ${\Bbbk}$ is a $A$-bimodule.
The {\it (reduced) Bar construction} $B(A)$ is just then the 
Hochschild chain complex $\mathcal{C}_*(A,{\Bbbk})$
and $\text{Ext}^*_A({\Bbbk},{\Bbbk})=HH^*(A,{\Bbbk})$. 

Dually, let $C$ be a coalgebra with diagonal $\Delta_C:C\rightarrow C\otimes C$.
Let $N$ be a $C$-bicomodule with left $C$-coaction
$\Delta_{N}^{l}:N\rightarrow C\otimes N$. and right $C$-coaction
$\Delta_{N}^{r}:N\rightarrow N\otimes C$.
The Hochschild cochain complex $\mathcal{C}_{coalg}^*(C,N)$
(~\cite[p. 57]{Gerstenhaber-Schack:algbqgad}
or~\cite[30.3]{Brzezinski-Wisbauer:corings}) is the cochain complex
$\mathcal{C}^n(C,N)=\text{Hom}(N,C^n)$ with differential $d:\mathcal{C}^n(C,N)\rightarrow\mathcal{C}^{n+1}(C,N)$
given by
$$d(\varphi)=(C\otimes\varphi)\circ \Delta^l_N+\sum_{i=1}^n(-1)^i
(C^{\otimes i-1}\otimes\Delta_C\otimes C^{\otimes n-i})\circ\varphi
+(-1)^{n+1}(\varphi\otimes C)\circ \Delta^r_N.
$$
The {\it Hochschild coalgebra cohomology} $HH^*_{coalg}(C,N)$ is the homology
of  $\mathcal{C}_{coalg}^*(C,N)$.
Suppose that $C$ has a coaugmentation $\eta:{\Bbbk}\rightarrow C$.
Then ${\Bbbk}$ is a $C$-bicomodule.
The {\it (reduced) cobar construction}
$\Omega(C)$~\cite[p. 432]{Kassel:quantumgrps} is just
 $\mathcal{C}^*_{coalg}(C,{\Bbbk})$
and $\text{Cotor}^*_C({\Bbbk},{\Bbbk})=HH^*_{coalg}(C,{\Bbbk})$. 
\section{Operads with multiplication}
A {\it Gerstenhaber algebra} is a commutative
graded algebra $A=\{A^i\}_{i\in
\mathbb{Z}}$
equipped with a bracket of degre $-1$
$$
\{-,-\}:A^i \otimes A^j \to A^{i+j-1} \,, \quad x\otimes y \mapsto \{x,y\}
$$
such that the product and the Lie bracket satisfy the Poisson rule:
for any  $c  \in A^k$  
the adjunction map $\{-,c\}:A^i \to A^{i+k-1} \,, \quad
a\mapsto \{a,c\}$ is  a
$(k-1)$-derivation: ie. for $a$, $b$, $c\in A$,
$\{ ab,c\} = \{ a,c\} b + (-1)^{\vert a\vert (\vert c\vert -1)} 
a \{ b,c\}$.

In this paper, every Gerstenhaber algebra comes from a (linear)
operad with multiplication using the following general theorem:
\begin{theorem}\cite{Gerstenhaber-Schack:algbqgad,Gerstenhaber-Voronov:hgamso,McClure-Smith:deligneconj}\label{Theoreme Gerstenhaber}
\noindent a) Each operad with multiplication $O$ is a cosimplicial
module (See~\ref{definition du module cosimplicial}).
Denote by $\mathcal{C}^{*}(O)$ the associated cochain complex. 

\noindent b) Its homology $H(\mathcal{C}^{*}(O))$
is a Gerstenhaber algebra.
\end{theorem}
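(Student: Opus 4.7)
The plan is to follow the approach of Gerstenhaber, Gerstenhaber--Voronov and McClure--Smith, leveraging that an operad with multiplication supplies two distinguished elements: a unit $e\in O(0)$ and a multiplication $\mu\in O(2)$ satisfying the usual associativity and unit relations with respect to the operadic compositions $\circ_i$.

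For part (a), I would construct cofaces $d^i:O(n)\to O(n+1)$ by inserting $\mu$: set $d^0(x)=\mu\circ_2 x$, $d^i(x)=x\circ_i\mu$ for $1\le i\le n$, and $d^{n+1}(x)=\mu\circ_1 x$. Dually, codegeneracies $s^i:O(n)\to O(n-1)$ are defined by inserting the unit, $s^i(x):=x\circ_{i+1}e$ for $0\le i\le n-1$. The cosimplicial identities are then a routine consequence of the associativity of $\mu$, the unit relations for $e$, and the parallel/sequential operadic axioms. The associated cochain complex $\mathcal{C}^*(O)$ has differential $d=\sum_i(-1)^i d^i$.

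For part (b), I would equip $\mathcal{C}^*(O)$ with the Gerstenhaber-style cup product $x\cup y:=(\mu\circ_1 x)\circ_{m+1} y$ for $x\in O(m)$, $y\in O(n)$, together with the bracket $[x,y]:=x\bar{\circ}y-(-1)^{(m-1)(n-1)}y\bar{\circ}x$ built from the pre-Lie composition $x\bar{\circ}y:=\sum_{i=1}^m(-1)^{(n-1)(i-1)}x\circ_i y$. Associativity of $\cup$ follows from associativity of $\mu$; that $\bar{\circ}$ is pre-Lie (its associator is symmetric in the last two arguments) is immediate from the parallel/sequential axioms of $\circ_i$, which in turn yields the graded Jacobi identity for $[-,-]$ of degree $-1$. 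A direct calculation shows that $d$ is a derivation of both $\cup$ and $[-,-]$, so both operations descend to $H(\mathcal{C}^*(O))$.

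The main obstacle, as in every proof of this theorem, is verifying graded commutativity of $\cup$ together with the Poisson/Leibniz rule \emph{on cohomology}, since neither holds at the chain level. The key step is to produce an explicit Hirsch-type chain homotopy of the form $x\cup y-(-1)^{|x||y|}y\cup x=\pm d(x\bar{\circ}y)\pm(dx)\bar{\circ}y\pm x\bar{\circ}(dy)$, assembled purely from the operadic compositions and the multiplication $\mu$. Once this single identity is checked, graded commutativity of the induced product on $H(\mathcal{C}^*(O))$ and the Leibniz rule relating $[-,-]$ and $\cup$ both follow formally. The McClure--Smith viewpoint in terms of the Deligne conjecture clarifies the origin of these formulas: the brace operations $x\{y_1,\dots,y_k\}$ built from the $\circ_i$ and $\mu$ assemble into an action of a chain model of the little $2$-discs operad on $\mathcal{C}^*(O)$, which \emph{a priori} forces a Gerstenhaber algebra structure on cohomology.
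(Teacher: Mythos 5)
Your construction is the same as the paper's sketch: identical coface and codegeneracy formulas, the same cup product $x\cup y=(\mu\circ_1 x)\circ_{m+1}y$, and the same bracket built from $\overline{\circ}$ (your convention omits the overall sign $(-1)^{(m-1)(n-1)}$ that the paper puts into $f\overline{\circ}g$, which is harmless), so in outline the two proofs agree. One caution about the logic of part (b): the claim that, once the single commutativity homotopy $x\cup y-(-1)^{|x||y|}y\cup x=\pm d(x\overline{\circ}y)\pm (dx)\overline{\circ}y\pm x\overline{\circ}(dy)$ is checked, the Poisson rule on cohomology ``follows formally'' is an overstatement. At the chain level only half of the Leibniz identity is exact, namely $(x\cup y)\overline{\circ}z=(x\overline{\circ}z)\cup y\pm x\cup(y\overline{\circ}z)$, which is immediate from the definition of $\cup$; the other half, comparing $z\overline{\circ}(x\cup y)$ with $(z\overline{\circ}x)\cup y\pm x\cup(z\overline{\circ}y)$, holds only up to a further chain homotopy furnished by the two-fold brace $z\{x,y\}$ (this is Gerstenhaber's original argument, packaged as the homotopy G-algebra identities of Gerstenhaber--Voronov). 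So the real content lies in your closing remark about the brace operations and the McClure--Smith $E_2$-chain structure, not in the commutativity homotopy alone; with that step made explicit, your argument is exactly the classical one the paper cites, and at the same level of detail as the paper's own sketch, which simply asserts the Poisson rule after passage to cohomology.
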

Let us first recall what is a (linear) operad and a (linear) operad with
multiplication.

In this paper, {\it operad} means non-$\Sigma$-operad in the category of
$\Bbbk$-modules. That is:
a sequence of modules $\{O(n)\}_{n\geq 0}$, an identity element $id\in O(1)$
and structure maps

$\gamma:O(n)\otimes O(i_1)\otimes\dots\otimes O(i_n)\rightarrow O(i_1+\dots+i_n)$

$f\otimes g_1\otimes\dots\otimes g_n\mapsto \gamma(f;g_1,\dots,g_n)$

\noindent
satisfying associativity and unit~\cite{Markl-Shnider-Stasheff:opeatp}.

Hereafter we use mainly the composition operations
$\circ_i:O(m)\otimes O(n)\rightarrow O(m+n-1)$,
$f\otimes g\mapsto f\circ_i g$
defined for $m\in\mathbb{N}^{*}$, $n\in\mathbb{N}$ and $1\leq i\leq m$
by $f\circ_i g:=\gamma(f;id,\dots,g,id,\dots,id)$ where $g$ is the $i$-th element
after the semicolon.
\begin{example}\label{operade endomorphisme}
Let $(\mathcal{C},\otimes,\Bbbk)$ be a monoidal category.
Suppose that $\mathcal{C}$ is enriched over the category of
$\Bbbk$-modules~\cite[I.8]{MacLane:catwm}
and that
$$\otimes: \text{Hom}_\mathcal{C}(V_1,W_1)\times \text{Hom}_\mathcal{C}(V_2,W_2)\rightarrow \text{Hom}_\mathcal{C}(V_1\otimes V_2,W_1\otimes W_2),$$
mapping $(g_1,g_2)$ to $g_1\otimes g_2$, is ${\Bbbk}$-bilinear
(we say that $\mathcal{C}$ is a {\it ${\Bbbk}$-linear monoidal category}).
Let $V$ be a object of $\mathcal{C}$. The {\it endomorphism operad} of $V$
in $\mathcal{C}$~\cite[p. 43]{Markl-Shnider-Stasheff:opeatp} is the operad
$\mathcal{E}nd_\mathcal{C}(V)$ defined by $$\mathcal{E}nd_\mathcal{C}(V)(n):=\text{Hom}_\mathcal{C}(V^{\otimes n},V).$$
The structure maps $\gamma$
$$\text{Hom}_\mathcal{C}(V^{\otimes n},V)\otimes \text{Hom}_\mathcal{C}(V^{\otimes i_1},V)\otimes\dots\otimes \text{Hom}_\mathcal{C}(V^{\otimes i_n},V)\rightarrow \text{Hom}_\mathcal{C}(V^{\otimes i_1+\dots+i_n},V)$$
are given by
$\gamma(f;g_1,\dots,g_n)=f\circ (g_1\otimes\dots\otimes g_n)$.
The identity element of $\mathcal{E}nd_\mathcal{C}(V)$ is the identity map $id_V:V\rightarrow V$.
\end{example}
\begin{example}
The {\it coendomorphism operad} of $V$
in $\mathcal{C}$, denoted
$\mathcal{C}o\mathcal{E}nd_\mathcal{C}(V)$, is by definition
the endomorphism operad of $V$ in the opposite category $\mathcal{C}^{op}$,
$\mathcal{E}nd_{\mathcal{C}^{op}}(V)$.
Explicitly~\cite[p. 43-4]{Markl-Shnider-Stasheff:opeatp} $\mathcal{C}o\mathcal{E}nd_\mathcal{C}(V)$
is the operad given by
$$\mathcal{C}o\mathcal{E}nd_\mathcal{C}(V)(n):=\text{Hom}_\mathcal{C}(V,V^{\otimes n}).$$
The structure maps $\gamma$
$$\text{Hom}_\mathcal{C}(V,V^{\otimes n})\otimes \text{Hom}_\mathcal{C}(V,V^{\otimes i_1})\otimes\dots\otimes \text{Hom}_\mathcal{C}(V,V^{\otimes i_n})\rightarrow \text{Hom}_\mathcal{C}(V,V^{\otimes i_1+\dots+i_n})$$
are given by
$\gamma(f;g_1,\dots,g_n)=(g_1\otimes\dots\otimes g_n)\circ f$.
The identity element of $\mathcal{E}nd_\mathcal{C}(V)$ is again $id$.
\end{example}
\begin{definition}\label{definition operade avec multiplication}
An {\it operad with multiplication} is an operad equipped with an element $\mu\in O(2)$ called the multiplication and an element $e\in O(0)$
such that
$\mu\circ_1\mu=\mu\circ_2\mu$
and $\mu\circ_1 e=id=\mu\circ_2 e.$
\end{definition}
Let $\underline{Ass}$ be the (non-$\Sigma$) associative
operad~\cite{Markl-Shnider-Stasheff:opeatp}: $\underline{Ass}(n):={\Bbbk}$.
An operad $O$ is an operad with multiplication if and only if
$O$ is equipped with a morphism of operads $\underline{Ass}\rightarrow O$.
\begin{proof}[Sketch of proof of~\ref{Theoreme Gerstenhaber}]

a) The coface maps $\delta_{i}:O(n)\rightarrow O(n+1)$ and codegeneracy maps
$\sigma_{i}:O(n)\rightarrow O(n-1)$
are defined \cite{McClure-Smith:deligneconj} by
\begin{equation}\label{definition du module cosimplicial}
\delta_{0}f=\mu\circ_2 f,\;
\delta_{i}f=f\circ_i\mu,\;
\delta_{n+1}f=\mu\circ_1 f,\;
\sigma_{i-1}f=f\circ_{i}e\text{ for }1\leq i\leq n.
\end{equation}
b) The associated cochain complex $\mathcal{C}^{*}(O)$ is the cochain
complex whose differential $d$ is given by $$d:=\sum_{i=0}^{n+1}(-1)^{i}\delta_{i}:O(n)\rightarrow O(n+1).$$
The linear maps $\cup:O(m)\otimes O(n)\rightarrow O(m+n)$ defined by
\begin{equation}\label{definition du cup}
f\cup g:=(\mu\circ_1 f)\circ_{m+1}g=(\mu\circ_2 g)\circ_{1}f
\end{equation}
gives $\mathcal{C}^{*}(O)$ a structure of differential graded algebra.
The linear maps of degree $-1$
$$\overline{\circ},\{-,-\}:O(m)\otimes O(n)\rightarrow O(m+n-1)$$
are defined by
\begin{equation}\label{definition du composition}
f\overline{\circ}g:=(-1)^{(m-1)(n-1)}\sum_{i=1}^{m}
(-1)^{(n-1)(i-1)}f\circ_i g
\end{equation}
and $$\{f,g\}:=f\overline{\circ}g-(-1)^{(m-1)(n-1)}g\overline{\circ}f.$$
The bracket $\{-,-\}$ defines a structure of differential graded Lie algebra
of degree $-1$ on $\mathcal{C}^{*}(O)$.
After passing to cohomology, the cup product $\cup$ and the bracket $\{-,-\}$
satisfy the Poisson rule.
\end{proof}
\begin{remark}\label{operation sur Steenrod sur cohomologie de Hochschild}
As pointed by Turchin in~\cite{turchin:dyeylashof}, the Gerstenhaber algebra
$H(\mathcal{C}^*(\mathcal{O})$ has Dyer-Lashof operations.
In particular~\cite[p. 63]{Gerstenhaber-Schack:algbqgad}, if $n$ is even
or if $2=0$ in ${\Bbbk}$, a Steenrod or Dyer-lashof (non additive) operation
$Sq^{n-1}:H^n(\mathcal{C}^*(\mathcal{O})\rightarrow H^{2n-1}(\mathcal{C}^*(\mathcal{O})$ is defined by $Sq^{n-1}(f)=f\overline{\circ}f$ for $f\in \mathcal{O}(n)$.
\end{remark}
\begin{remark}\label{crochet en degre un}
Let $\mathcal{O}$ be an operad.
Then $\mathcal{O}(1)$ equipped with
$\circ_1:\mathcal{O}(1)\otimes\mathcal{O}(1)\rightarrow\mathcal{O}(1)$
and $id:{\Bbbk}\rightarrow \mathcal{O}(1)$ is an algebra.
By~(\ref{definition du composition}), the Lie algebra $\mathcal{C}^{1}(O)$ 
is just  $\mathcal{O}(1)$ equipped with the Lie bracket given by
$
\{f,g\}:=f\circ_1 g-g\circ_1 f
$.
\end{remark}
\begin{example}
Let $A$ be a monoid in $\mathcal{C}$, i. e.
an object of  $\mathcal{C}$ equipped with an 
associative multiplication
$\mu:A\otimes A\rightarrow A$ and an unit $e:\Bbbk\rightarrow A$.
Then the endomorphism operad $\mathcal{E}nd_\mathcal{C}(A)$ of $A$ equipped
with
$\mu\in \text{Hom}_\mathcal{C}(A^{\otimes 2},A)=\mathcal{E}nd_\mathcal{C}(A)(2)$ and $e\in \text{Hom}_\mathcal{C}(A^{\otimes 0},A)=\mathcal{E}nd_\mathcal{C}(A)(0)$ is an operad with multiplication.
The associated cosimplicial module is the cosimplicial module
$\{\text{Hom}_\mathcal{C}(A^{\otimes n},A)\}_{n\in\mathbb{N}}$.
The coface maps $\delta_i:\text{Hom}_\mathcal{C}(A^{\otimes n},A)
\rightarrow\text{Hom}_\mathcal{C}(A^{\otimes n+1},A)$
and the codegeneracy map $\sigma_i:\text{Hom}_\mathcal{C}(A^{\otimes n},A)
\rightarrow\text{Hom}_\mathcal{C}(A^{\otimes n-1},A)$ are given
by~\cite[(2.5)]{MenichiL:BValgaccoHa}
\begin{equation}\label{cosimplicial associe a endomorphism}
\delta_{0}f=\mu\circ (id\otimes f),\;
\delta_{i}f=f\circ (id^{\otimes i-1}\otimes \mu\otimes id^{\otimes n-i}),\;
\delta_{n+1}f=\mu\circ (f\otimes id),\;
\end{equation}
and $\sigma_{i-1}f=f\circ (id^{\otimes i-1}\otimes e\otimes id^{\otimes n-i})$
for $1\leq i\leq n$.
\end{example}
If $\mathcal{C}$ is the category of ${\Bbbk}$-modules,
$A$ is an algebra and the cochain complex $\mathcal{C}^{*}(\mathcal{E}nd_\mathcal{C}(A))$ associated to this cosimplicial module is
the Hochschild cochain complex of $A$, denoted $\mathcal{C}^{*}(A,A)$.
This is why Turchin in his work on knots~\cite{turchin:homologyknots,turchin:bialgebrachord} always call the cochain
complex associated to a linear operad with multiplication,
the Hochschild cochain complex of the operad with multiplication.
\begin{propriete}\label{functor monoidal induit morphism d'operades}
Let $\mathcal{C}$ and $\mathcal{D}$ be two ${\Bbbk}$-linear monoidal
categories.
Let $F:\mathcal{C}\rightarrow \mathcal{D}$ be a monoidal functor
(in the sense of~\cite[p. 255]{MacLane:catwm}).
Let $\psi:F(V)\otimes F(W)\rightarrow F(V\otimes W)$ be the associated
associative unital natural transformation.
Suppose that
$F:\text{Hom}_\mathcal{C}(V,W)\rightarrow \text{Hom}_\mathcal{D}(F(V),F(W))$
is ${\Bbbk}$-linear.
Let $A$ be a monoid in $\mathcal{C}$. Then $F(A)$ is a monoid in $\mathcal{D}$
and the map $\Gamma$ from $\mathcal{E}nd_\mathcal{C}(A)$ to $\mathcal{E}nd_\mathcal{D}(F(A))$,
mapping $f:A^{\otimes n}\rightarrow A$ to the composite $F(f)\circ \psi:F(A)^{\otimes n}\rightarrow F(A)$,
is a morphism of operads with multiplication.
\end{propriete}
\begin{example}
Dually, let $C$ be a comonoid in  $\mathcal{C}$, i. e. an object
of  $\mathcal{C}$ equipped with a coassociative diagonal
$\Delta:C\rightarrow C\otimes C$
and a counit $\varepsilon:C\twoheadrightarrow {\Bbbk}$.
Since $C$ is a monoid in $\mathcal{C}^{op}$,
the coendomorphism operad of $C$, 
$\mathcal{C}o\mathcal{E}nd_\mathcal{C}(C)$
equipped with 
$\Delta\in \text{Hom}_\mathcal{C}(C,C^{\otimes 2})=\mathcal{C}o\mathcal{E}nd_\mathcal{C}(C)(2)$ and $\varepsilon\in \text{Hom}_\mathcal{C}(C, C^{\otimes 0})=\mathcal{C}o\mathcal{E}nd_\mathcal{C}(C)(0)$ is also an operad with multiplication.
The associated cosimplicial module is the cosimplicial module
$\{\text{Hom}_\mathcal{C}(C,C^{\otimes n})\}_{n\in\mathbb{N}}$.
The coface maps $\delta_i:\text{Hom}_\mathcal{C}(C,C^{\otimes n})
\rightarrow\text{Hom}_\mathcal{C}(C,C^{\otimes n+1})$
and the codegeneracy map $\sigma_i:\text{Hom}_\mathcal{C}(C,C^{\otimes n})
\rightarrow\text{Hom}_\mathcal{C}(C,C^{\otimes n-1})$ are given
by
\begin{equation}\label{cosimplicial associe a coendomorphism}
\delta_{0}f=(id\otimes f)\circ\Delta,\;
\delta_{i}f=(id^{\otimes i-1}\otimes \Delta\otimes id^{\otimes n-i})\circ f,\;
\delta_{n+1}f=(f\otimes id)\circ \Delta,\;
\end{equation}
and $\sigma_{i-1}f=(id^{\otimes i-1}\otimes \varepsilon\otimes id^{\otimes n-i})\circ f$
for $1\leq i\leq n$.
\end{example}
If $\mathcal{C}$ is the category of ${\Bbbk}$-modules,
$C$ is a coalgebra and the cochain complex $\mathcal{C}^{*}(\mathcal{C}o\mathcal{E}nd_\mathcal{C}(C))$ associated to this cosimplicial module is
the Hochschild cochain complex of the coalgebra $C$, denoted $\mathcal{C}^{*}_{coalg}(C,C)$.

More generally, let $A$ be ${\Bbbk}$-algebra.
 Let $\mathcal{C}$ be the category of $A$-bimodules.
Let $C$ be a $A$-coring, i. e. a comonoid in $\mathcal{C}$
(\cite[4.2]{Kadison:Frobeniusextension} or~\cite[17.1]{Brzezinski-Wisbauer:corings}).
The cochain complex $\mathcal{C}^{*}(\mathcal{C}o\mathcal{E}nd_\mathcal{C}(C))$ associated to this cosimplicial module is
the Cartier cochain complex of $C$ with coefficients in $C$, denoted $C_{Ca}(C,C)$.
Therefore, without any calculations, we have obtained that
$C_{Ca}(C,C)$ is an operad with multiplication~\cite[30.8]{Brzezinski-Wisbauer:corings}.
This is again an example of our leitmotiv in this paper:

``Every operad with multiplication should be the endomorphism operad of a monoid in a appropriate monoidal category $\mathcal{C}$''.
\section{Gerstenhaber algebra structure on $\text{Ext}_A^*({\Bbbk},{\Bbbk})$}\label{algebre de Gerstenhaber sur le produit exterieur}

Let $C$ be a bialgebra.
The Cobar construction of $C$ is the cosimplicial module associated to a
specific linear operad with multiplication~\cite[p. 65]{Gerstenhaber-Schack:algbqgad}.
Therefore its cohomology $\text{Cotor}^{*}_C(\Bbbk,\Bbbk)$
has a Gerstenhaber algebra structure.
In the following, we show that this operad with multiplication
is just the endomorphism operad of a monoid in an appropriate monoidal
category and we show:
\begin{theorem}\label{cotor sous algebre de Gerstenhaber de Hochschild}
Let $C$ be a bialgebra.
Then $\text{Cotor}^{*}_C(\Bbbk,\Bbbk)$ is a sub Gerstenhaber algebra
of the Hochschild cohomology of the coalgebra $C$, $HH^*_{coalg}(C,C)$.
\end{theorem}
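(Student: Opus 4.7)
The plan is to apply the leitmotiv stated at the end of Section~3: realize $\Omega(C)$ as the coendomorphism operad of a comonoid in a suitable ${\Bbbk}$-linear monoidal category, and then obtain the inclusion into $\mathcal{C}^*_{coalg}(C,C)=\mathcal{C}o\mathcal{E}nd_{{\Bbbk}\text{-Mod}}(C)$ as the morphism of operads with multiplication induced by a strict monoidal forgetful functor via (the dual of) Property~\ref{functor monoidal induit morphism d'operades}.

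Concretely, I take $\mathcal{D}$ to be the category of right $C$-modules. Since $C$ is a bialgebra, $\mathcal{D}$ is ${\Bbbk}$-linear monoidal: the tensor product carries the diagonal right $C$-action through $\Delta_C$, and the unit is ${\Bbbk}$ with action via $\varepsilon_C$. The bialgebra axioms say exactly that $\Delta_C$ and $\varepsilon_C$ are morphisms of right $C$-modules, so $C$ with its regular right action is a comonoid in $\mathcal{D}$. Since $C$ is the free right $C$-module on $1_C$, evaluation at $1_C$ gives an isomorphism
$$
\mathcal{C}o\mathcal{E}nd_{\mathcal{D}}(C)(n)=\mathrm{Hom}_{C\text{-mod}}(C,C^{\otimes n})\xrightarrow{\cong} C^{\otimes n}=\Omega(C)^n.
$$
A direct computation, using $\Delta_C(1)=1\otimes 1$, $\varepsilon_C(1)=1$ and right $C$-linearity (which determines any $g\in\mathrm{Hom}_{C\text{-mod}}(C,C^{\otimes n})$ from $g(1_C)$ via the diagonal action on $C^{\otimes n}$), shows that under this isomorphism the cosimplicial module, the multiplication $\Delta_C$, the unit $\varepsilon_C$ and the operations $\circ_i$ transported from $\mathcal{C}o\mathcal{E}nd_{\mathcal{D}}(C)$ coincide with the cobar cosimplicial structure of Section~2 and with the Gerstenhaber-Schack operad with multiplication on $\Omega(C)$.

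Applying Property~\ref{functor monoidal induit morphism d'operades} (in the opposite categories) to the strict monoidal forgetful functor $F\colon\mathcal{D}\to{\Bbbk}\text{-Mod}$ then yields a morphism of operads with multiplication
$$
\Phi\colon\Omega(C)\cong\mathcal{C}o\mathcal{E}nd_{\mathcal{D}}(C)\longrightarrow\mathcal{C}o\mathcal{E}nd_{{\Bbbk}\text{-Mod}}(C)=\mathcal{C}^*_{coalg}(C,C),
$$
explicitly $\Phi(x_1\otimes\cdots\otimes x_n)(c)=\sum(x_1 c_{(1)})\otimes\cdots\otimes(x_n c_{(n)})$. By Theorem~\ref{Theoreme Gerstenhaber}, $\Phi$ induces a morphism of Gerstenhaber algebras $H^*(\Phi)\colon\text{Cotor}^*_C({\Bbbk},{\Bbbk})\to HH^*_{coalg}(C,C)$.

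To upgrade this to an inclusion of a sub Gerstenhaber algebra, I would construct a retraction at the cosimplicial level by evaluation at the unit, $r(g):=g(1_C)$. The identities $\Delta_C(1)=1\otimes 1$ and $\varepsilon_C(1)=1$ make $r$ commute with every coface and codegeneracy of both $\mathcal{C}^*_{coalg}(C,C)$ and $\Omega(C)$, and the equality $r\circ\Phi=\mathrm{id}_{\Omega(C)}$ is immediate. Passing to cohomology gives $H^*(r)\circ H^*(\Phi)=\mathrm{id}$, so $H^*(\Phi)$ is injective and identifies $\text{Cotor}^*_C({\Bbbk},{\Bbbk})$ with a sub Gerstenhaber algebra of $HH^*_{coalg}(C,C)$. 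The main obstacle in this plan is the operad identification in the second paragraph: verifying that the $\circ_i$ transported from $\mathcal{C}o\mathcal{E}nd_{\mathcal{D}}(C)$ really is the Gerstenhaber-Schack $\circ_i$ on $\Omega(C)$ comes down, after invoking right $C$-linearity, to the bialgebra compatibility between product and coproduct.
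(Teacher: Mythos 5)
Your proposal is correct and is essentially the paper's own proof: the paper likewise realizes $\Omega C$ as $\mathcal{C}o\mathcal{E}nd_{C\text{-mod}}(C)$ via the mutually inverse maps ``evaluate at $1_C$'' and ``extend by the diagonal action'', includes it into $\mathcal{C}o\mathcal{E}nd_{{\Bbbk}\text{-mod}}(C)=\mathcal{C}^*_{coalg}(C,C)$ as a morphism of operads with multiplication, and uses the retraction $\mathcal{C}^*_{coalg}(C,\eta)$ (your $r$, evaluation at $1_C$) to get injectivity on cohomology. Your use of right $C$-modules instead of left ones, and of Property~\ref{functor monoidal induit morphism d'operades} for the forgetful functor instead of simply observing that the inclusion is a morphism of operads with multiplication, are only cosmetic variations.
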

By Property~\ref{cotor en degre un egal primitif comme algebre de Lie},
this Lie bracket of degre $-1$ on the cotorsion product of a bialgebra is an extension
of the well-known Lie bracket on the primitive elements of a bialgebra.
Dually, we prove
\begin{theorem}\label{ext sous algebre de Gerstenhaber de Hochschild}
Let $A$ be a bialgebra.
Then $\text{Ext}^{*}_A(\Bbbk,\Bbbk)$ is a sub Gerstenhaber algebra
of the Hochschild cohomology of the algebra $A$, $HH^*(A,A)$.
\end{theorem}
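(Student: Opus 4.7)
The plan is to mirror the strategy behind Theorem~\ref{cotor sous algebre de Gerstenhaber de Hochschild} in the dual setting, passing from the coalgebra structure to the algebra structure of the bialgebra. Following the leitmotiv of the paper, I would realize $\text{Ext}^*_A(\Bbbk,\Bbbk)$ as the cohomology of a sub-operad-with-multiplication of the Hochschild operad $\mathcal{C}^*(A,A) = \mathcal{E}nd_{\Bbbk\text{-Mod}}(A)$, with the inclusion built from the bialgebra structure of $A$ via the diagonal coaction.

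First I would identify $\text{Ext}^*_A(\Bbbk,\Bbbk)$ with the cohomology of the Hochschild cochain complex $\mathcal{C}^*(A,\Bbbk)$ of $A$ with coefficients in the trivial bimodule $\Bbbk$ (via $\varepsilon$). Using $\Delta$, I would then define a map $\iota: \mathcal{C}^*(A,\Bbbk) \to \mathcal{C}^*(A,A)$ by
$$ \iota(f)(a_1 \otimes \cdots \otimes a_n) := \sum a_{1,(1)} a_{2,(1)} \cdots a_{n,(1)} \cdot f(a_{1,(2)} \otimes \cdots \otimes a_{n,(2)}). $$
A direct Sweedler-notation computation, using that $\Delta$ is an algebra map and the counit axiom, confirms that $\iota$ is a cochain map. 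Moreover, post-composition with the counit, $\varepsilon_*: \mathcal{C}^*(A,A) \to \mathcal{C}^*(A,\Bbbk)$, $g \mapsto \varepsilon \circ g$, is a cochain map (because $\varepsilon$ is multiplicative) and satisfies $\varepsilon_* \circ \iota = \mathrm{id}$ by the counit axiom. So $\iota$ is a chain-level split monomorphism and hence induces an injection on cohomology.

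Next I would verify that $\iota$ is a morphism of operads with multiplication. For the cup product, a short Sweedler computation yields $\iota(f) \cup \iota(g) = \iota(f \cup g)$, where $f \cup g$ denotes the natural cup product of $\Bbbk$-valued cochains. The $\circ_i$ operations require a more involved calculation, but the same ingredients---coassociativity and multiplicativity of $\Delta$---show that $\iota(f) \circ_i \iota(g)$ lies in the image of $\iota$, giving rise to an induced operation $\bar\circ_i$ on $\mathcal{C}^*(A,\Bbbk)$. This endows $\mathcal{C}^*(A,\Bbbk)$ with an operad-with-multiplication structure for which $\iota$ is a morphism. Applying Theorem~\ref{Theoreme Gerstenhaber}, the resulting map on cohomology is a morphism of Gerstenhaber algebras, and by the injectivity above, it realizes $\text{Ext}^*_A(\Bbbk,\Bbbk)$ as a sub Gerstenhaber algebra of $HH^*(A,A)$.

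The main obstacle is the verification that the image of $\iota$ is closed under the $\circ_i$ operations. The difficulty is that the operad composition in $\mathcal{C}^*(A,A)$ mixes inputs through the algebra multiplication of $A$, while the definition of $\iota$ hides a comultiplication; one must carefully track, using coassociativity and the bialgebra axiom that $\Delta$ is an algebra map, how these two operations interlock. Modulo this Sweedler-notation bookkeeping, the rest of the argument is formal, and it is reassuring that exactly the same bialgebra axioms that enter the Cotor case re-appear here, as they should under the proposed algebra/coalgebra duality.
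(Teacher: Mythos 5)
Your proposal is correct and is essentially the paper's own argument: your map $\iota$ is exactly the section $lift$ (the Farinati--Solotar section) used in the paper, with the same retraction $\mathcal{C}^*(A,\varepsilon)$, and the same overall strategy of exhibiting $\mathcal{C}^*(A,\Bbbk)$ as a split sub-operad-with-multiplication of $\mathcal{E}nd_{{\Bbbk}-mod}(A)$ and then applying Theorem~\ref{Theoreme Gerstenhaber}. The only difference is that the paper bypasses the Sweedler bookkeeping you flag as the main obstacle by identifying the image of $\iota$ with $\mathcal{E}nd_{A-comod}(A)$, the endomorphism operad of $A$ as a monoid in left $A$-comodules (via the cofree-comodule isomorphism of Property~\ref{propriete universelle comodule libre}), so that closure under the $\circ_i$ and under the multiplication $\mu$, $e$ is automatic rather than checked by hand.
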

When $A$ is a Hopf algebra, this theorem was proved by Farinati and
Solotar~\cite{Farinati-Solotar:GstrcohHopf}.
But as we would like to emphasize, antipodes are not needed for
the Gerstenhaber algebra structure. As we explain in
Theorem~\ref{ext BV algebre},
antipodes are needed only to have a Batalin-Vilkovisky algebra structure.

By Property~\ref{ext en degre un egal dual indecomposable comme algebre de Lie}, this inclusion of Gerstenhaber algebras is in degre $1$ the inclusion of the Lie
algebra of ``differentiations'' into the Lie algebra of derivations, well known in algebraic
groups.

In Proposition~\ref{dualite entre inclusions Gerstenhaber}, we prove that
when the bialgebra $C$ is ${\Bbbk}$-free of finite type, Theorem~\ref{ext sous algebre de Gerstenhaber de Hochschild} is the dual of
Theorem~\ref{cotor sous algebre de Gerstenhaber de Hochschild}. This duality will
be later extended in Corollary~\ref{dualite operad cyclic bar cobar}.

In Conjectures~\ref{conjecture crochet trivial pour bialgebre tresse}
and~\ref{conjecture ext 3-algebre}, we explain that if the bialgebra $A$
is braided, the Lie bracket of degre $-1$ given by Theorem~\ref{ext sous algebre de Gerstenhaber de Hochschild} on $\text{Ext}^*_A(\mathbb{F},\mathbb{F})$ should vanish and be replaced by a Lie bracket of degre $-2$.
This is related to a conjecture of Kontsevich.

In Corollary~\ref{lacet double sous algebre Gerstenhaber de Hochschild},
we explain that the homology of a double loop space $H_*(\Omega^2 X)$
is always a sub Gerstenhaber algebra of Hochschild cohomology if
$X$ is $2$-connected.

In Corollary~\ref{cohomology d'un espace sous algebre Gerstenhaber de Hochschild},
we show that the cohomology algebra of any path-connected topological space is also
a sub Gerstenhaber algebra of Hochschild cohomology.
\begin{proof}[Proof of Theorem~\ref{cotor sous algebre de Gerstenhaber de Hochschild}]
The category of left $C$-modules, $C$-mod, is a monoidal category.
Let $M$ be a comonoid in this monoidal category, i. e. $M$ is a $C$-module
coalgebra~\cite[Definition IX.2.1]{Kassel:quantumgrps}.
The coendomorphism operad associated to $M$ is the operad
$\{\text{Hom}_{C-mod}(M,M^{\otimes n})\}_{n\in\mathbb{N}}$
with multiplication $\Delta:M\rightarrow M\otimes M\in \text{Hom}_{C-mod}(M,M^{\otimes 2})$
and  $\varepsilon:M\rightarrow{\Bbbk}\in \text{Hom}_{C-mod}(M,M^{\otimes 0})$.
The inclusion maps $i_C:\text{Hom}_{C-mod}(M,M^{\otimes n})
\hookrightarrow \text{Hom}_{{\Bbbk}-mod}(M,M^{\otimes n})
$
defines obviously a morphism of linear operads with multiplication.

The underlying coalgebra $C$ is an example of $C$-module coalgebra.
Therefore we can take in particular $M=C$.
The linear morphism
$ev:\text{Hom}_{C-mod}(C,C^{\otimes n})\buildrel{\cong}\over\rightarrow C^{\otimes n}$,
mapping $f:C\rightarrow C^{\otimes n}$ to $f(1)$ is an isomorphism.
The inverse is the linear map
$ext:C^{\otimes n}\buildrel{\cong}\over\rightarrow \text{Hom}_{C-mod}(C,C^{\otimes n})$,
mapping $c_1\otimes\dots\otimes c_n$ to $f:C\rightarrow C^{\otimes n}$
defined by $f(c)=c^{(1)}c_1\otimes\dots\otimes c^{(n)}c_n$.
Here we have denoted by $c^{(1)}\otimes\dots\otimes c^{(n)}$ the iterated diagonal
of $c$, $\Delta^{n-1}(c)$.
Consider the associated cosimplicial set
$\{\text{Hom}_{C-mod}(C,C^{\otimes n})\}_{n\in\mathbb{N}}$.
The coface maps $\delta_i$ and codegeneracy maps $\sigma_i$ are
given by equations~(\ref{cosimplicial associe a coendomorphism}).
Therefore for $1\leq i\leq n$,

$ev\circ \delta_0\circ ext(c_1\otimes\dots\otimes c_n)=1\otimes c_1\otimes\dots\otimes c_n
$,

$ev\circ \delta_i\circ ext(c_1\otimes\dots\otimes c_n)=c_1\otimes\dots\otimes\Delta(c_i)\otimes\dots\otimes c_n
$,

$ev\circ \delta_{n+1}\circ ext(c_1\otimes\dots\otimes c_n)= c_1\otimes\dots\otimes c_n\otimes 1
$ and

$ev\circ \sigma_{i-1}\circ ext(c_1\otimes\dots\otimes c_n)=c_1\otimes\dots\otimes\varepsilon (c_i)\otimes\dots\otimes c_n
$.

So $ext:C^{\otimes n}\buildrel{\cong}\over\rightarrow \text{Hom}_{C-mod}(C,C^{\otimes n})$
is an isomorphism of cosimplicial modules between the Cobar construction of $C$,
$\Omega C$, and the cosimplicial module associated to the operad with multiplication
$\mathcal{C}o\mathcal{E}nd_{C-mod}(C)$.
Therefore $\text{Cotor}^{*}_C(\Bbbk,\Bbbk):=H^*(\Omega C)$ is a Gerstenhaber algebra.
The composite $$C^{\otimes n}\buildrel{ext}\over\rightarrow \text{Hom}_{C-mod}(C,C^{\otimes n})
\subset \text{Hom}_{{\Bbbk}-mod}(C,C^{\otimes n})$$ admits the morphism of differential graded algebras
$$\mathcal{C}^*(C,\eta):\mathcal{C}^*_{coalg}(C,C):\rightarrow \mathcal{C}^*_{coalg}(C,{\Bbbk})=\Omega C$$
mapping $f:C\rightarrow C^{\otimes n}$ to $f(1)$ as retract.
Passing to cohomology, we obtain an injective morphism of Gerstenhaber algebras 
$$\text{Cotor}^{*}_C(\Bbbk,\Bbbk)\hookrightarrow HH^*_{coalg}(C,C)$$
which admits the morphism of graded algebras $$HH^*(C,\eta):HH^*_{coalg}(C,C)\twoheadrightarrow
\text{Cotor}^{*}_C(\Bbbk,\Bbbk)$$ as retract.
\end{proof}
\begin{propriete}\label{cotor en degre un egal primitif comme algebre de Lie}
The Lie algebra structure on $\text{Cotor}^{1}_C(\Bbbk,\Bbbk)$ given by
Theorem~\ref{cotor sous algebre de Gerstenhaber de Hochschild}
coincides with the Lie algebra of primitive elements $P(C)$ of the bialgebra $C$.
\end{propriete}
\begin{proof}
Consider the isomorphisms $ext$ and $ev$ given in the proof of Theorem~\ref{cotor sous algebre de Gerstenhaber de Hochschild}. We have:
\begin{multline}\label{Composition operad cobar}
ev\circ \circ_i \circ ( ext\otimes ext)(a_1\otimes\dots\otimes a_m\otimes b_1\otimes\dots\otimes b_n)=
\\
a_1\otimes\dots\otimes a_{i-1}\otimes a_i^{(1)}b_1\otimes\dots\otimes a_i^{(n)}b_n\otimes a_{i+1}\otimes\dots\otimes a_m.
\end{multline}
$
ev(id_C)=1_C\in C$, 
$
ev(\varepsilon)=1_{\Bbbk}\in {\Bbbk}$ and
$ ev(\Delta)=1_C\otimes 1_C\in C\otimes C$.
Therefore $ev:\text{Hom}_{C-mod}(C,C^{\otimes n})\buildrel{\cong}\over\rightarrow C^{\otimes n}$
is an isomorphism of linear operads with multiplication between 
$\mathcal{C}o\mathcal{E}nd_{C-mod}(C)$ and the operad with multiplication $\mathcal{O}$ of
~\cite[Proof of Corollary 2.9]{MenichiL:BValgaccoHa}, first considered by Gerstenhaber and Schack
~\cite[p. 65]{Gerstenhaber-Schack:algbqgad} (See also~\cite[Example 3.5]{McClure-Smith:deligneconj}).
In particular $\circ_1:\mathcal{O}(1)\otimes \mathcal{O}(1)\rightarrow \mathcal{O}(1)$
is the multiplication of $C$, $\mu:C\otimes C\rightarrow C$.
Therefore, by~(\ref{crochet en degre un}), the Lie algebra $\text{Cotor}^{1}_C(\Bbbk,\Bbbk)$ coincides with the Lie algebra
of primitive elements of $C$, denoted $P(C)$.
\end{proof}
In order to check that the Gerstenhaber algebra structure
given by Theorem~\ref{ext sous algebre de Gerstenhaber de Hochschild}
coincides with  the Gerstenhaber algebra structure on 
$\text{Ext}^{*}_A(\Bbbk,\Bbbk)$ given by Farinati and
Solotar~\cite{Farinati-Solotar:GstrcohHopf}, we give the proof of 
Theorem~\ref{ext sous algebre de Gerstenhaber de Hochschild}.
\begin{propriete}\label{propriete universelle comodule libre}
Let $C$ be a coalgebra. Let $\varepsilon:C\twoheadrightarrow {\Bbbk}$ be its counit. Let $N$ be a left $C$-comodule.
Then the linear morphism $$proj:\text{Hom}_{C-comod}(N,C)\buildrel{\cong}\over\rightarrow N^\vee,\quad F\mapsto \varepsilon\circ F,$$
is an isomorphism.
Its inverse is the linear map $lift:N^\vee\buildrel{\cong}\over\rightarrow \text{Hom}_{C-comod}(N,C)$ mapping $f:N\rightarrow {\Bbbk}$
to the composite
$N\buildrel{\Delta_N}\over\rightarrow C\otimes N\buildrel{C\otimes f}\over\rightarrow C\otimes{\Bbbk}=C$.
\end{propriete}
\begin{proof}[Proof of Theorem~\ref{ext sous algebre de Gerstenhaber de Hochschild}]
The category of left $A$-comodules, $A$-comod, is a monoidal category.
Let $M$ be a monoid in this monoidal category, i. e. $M$ is a $A$-comodule algebra~\cite[Definition III.7.1]{Kassel:quantumgrps}.
The endomorphism operad associated to $M$ is the operad
$\{\text{Hom}_{A-comod}(M^{\otimes n},M)\}_{n\in\mathbb{N}}$
with multiplication $\mu:M\otimes M\rightarrow M\in \text{Hom}_{A-comod}(M^{\otimes 2},M)$
and  $\eta:{\Bbbk}\rightarrow M\in \text{Hom}_{A-comod}(M^{\otimes 0},M)$.
The coaction of $A$ on $M^{\otimes n}$, $\Delta_{M^{\otimes n}}$, is
the composite 
$
M^{\otimes n}\buildrel{\Delta_M^{\otimes n}}\over\rightarrow (A\otimes M)^{\otimes n}\buildrel{\tau}\over\rightarrow A^{\otimes n}
\otimes M^{\otimes n}\buildrel{\mu_A\otimes M^{\otimes n} }\over\rightarrow A\otimes M^{\otimes n}$ where $\tau$ is the exchange isomorphism.
Explicitly $\Delta_{M^{\otimes n}}(a_1\otimes\dots\otimes a_n)=
a_1^{(1)} \dots a_n^{(1)}\otimes
(a_1^{(2)}\otimes\dots\otimes a_n^{(2)}$)
where $\Delta_M a_i=a_i^{(1)}\otimes a_i^{(2)}$.

We now take $M=A$. Using Property~\ref{propriete universelle comodule libre} with $C=A$ and $N=A^{\otimes n}$, we obtain that
$$proj:\text{Hom}_{A-comod}(A^{\otimes n},A)\buildrel{\cong}\over\rightarrow (A^{\otimes n})^\vee,\quad F\mapsto \varepsilon\circ F,$$
is an isomorphism.
Its inverse is the linear map $lift:(A^{\otimes n})^\vee\buildrel{\cong}\over\rightarrow \text{Hom}_{A-comod}(A^{\otimes n},A)$ mapping $f:A^{\otimes n}\rightarrow {\Bbbk}$
to $F:A^{\otimes n}\rightarrow A$ defined by
$F(a_1\otimes\dots\otimes a_n)=
a_1^{(1)} \dots a_n^{(1)}
f(a_1^{(2)}\otimes\dots\otimes a_n^{(2)}$).

Therefore the composite $$(A^{\otimes n})^\vee\buildrel{lift}\over\rightarrow \text{Hom}_{A-comod}(A^{\otimes n},A)
\subset \text{Hom}_{{\Bbbk}-mod}(A^{\otimes n},A)$$
coincides with the section of 
$\mathcal{C}^*(A,\varepsilon):\mathcal{C}^*(A,A)\rightarrow BA^\vee
$
defined by Farinati and
Solotar~\cite[p. 2862]{Farinati-Solotar:GstrcohHopf}.

Consider the associated cosimplicial set
$\{\text{Hom}_{A-comod}(A^{\otimes n},A)\}_{n\in\mathbb{N}}$.
The coface maps $\delta_i$ and codegeneracy maps $\sigma_i$ are
given by equations~(\ref{cosimplicial associe a endomorphism}).
Therefore for $1\leq i\leq n$,

$proj\circ \delta_0\circ lift(f)(a_1\otimes\dots\otimes a_{n+1})=
\varepsilon(a_1)f(a_2\otimes\dots\otimes a_{n+1})
$,

$proj\circ \delta_i\circ lift(f)(a_1\otimes\dots\otimes a_{n+1})=f(a_1\otimes\dots\otimes a_ia_{i+1}\otimes\dots\otimes a_{n+1})
$,

$proj\circ \delta_{n+1}\circ lift(f)(a_1\otimes\dots\otimes a_{n+1})= f(a_1\otimes\dots\otimes a_n)\varepsilon(a_{n+1})
$ and

$proj\circ \sigma_{i-1}\circ lift(f)(a_1\otimes\dots\otimes a_{n-1})=
f(a_1\otimes\dots\otimes a_{i-1}\otimes 1_A\otimes a_i\otimes\dots\otimes a_n)
$.

So $lift:(A^{\otimes n})^\vee\buildrel{\cong}\over\rightarrow
\text{Hom}_{A-comod}(A^{\otimes n},A)$
is an isomorphism of cosimplicial modules between the dual of the bar construction of $A$,
$BA^\vee$, and the cosimplicial module associated to the operad with multiplication
$\mathcal{E}nd_{A-comod}(A)$.
Therefore $\text{Ext}^{*}_A(\Bbbk,\Bbbk):=H^*(BA^\vee)$ is a Gerstenhaber algebra.
\end{proof}
Let $A$ be an algebra and $M$ be a $A$-bimodule.
The cocycles of degre $1$ of the Hochschild complex $\mathcal{C}^*(A,M)$
are exactly the module of derivations $\text{Der}(A,M)$.
A linear map $f:A\rightarrow M$ is a {\em derivation} if and only if 
$\forall a$, $b\in A$, $f(ab)=f(a)b+af(b)$.
The boundaries of degre $1$ of  $\mathcal{C}^*(A,M)$ are the inner derivations, i. e.
the linear maps $f:A\rightarrow M$, $a\mapsto am-ma$, where $m$ is a given element of
$M$.
The degre $1$ component of Hochschild cohomology, $HH^1(A,M)$,
can be identified with the quotient $\text{Der}(A,M)/\{\text{inner derivations}\}$
~\cite[1.5.2]{LodayJ.:cych}.
In particular, suppose that $A$ has an augmentation $\varepsilon:A\rightarrow {\Bbbk}$.
Then $\text{Ext}_A^1({\Bbbk},{\Bbbk})=HH^1(A,{\Bbbk})=\text{Der}(A,{\Bbbk})$.
\begin{propriete}\label{ext en degre un egal dual indecomposable comme algebre de Lie}
Let $A$ be a bialgebra. The inclusion of Lie algebra 
$\text{Ext}_A^1({\Bbbk},{\Bbbk})\hookrightarrow HH^1(A,A)$ given by
Theorem~\ref{ext sous algebre de Gerstenhaber de Hochschild}
can be identified with the following composite of Lie algebra morphisms
$$
\text{Der}(A,{\Bbbk})\buildrel{i}\over\hookrightarrow 
\text{Der}(A,A)\buildrel{q}\over\twoheadrightarrow \text{Der}(A,A)/\{inner derivations\}.
$$
Here $q$ is the obvious quotient map and $i$ is the inclusion of the Lie algebra of ``differentiations'' of $A$
into the Lie algebra of derivations of $A$ given
by~\cite[p. 36]{Hochschild:alggroupandliealgebra}.
\end{propriete}
Let $G$ be an affine algebraic group.
Then the {\em algebra of polynomial functions} on $G$, ${\mathcal{P}(G)}$,
is a commutative Hopf algebra.
By definition~\cite[p. 36]{Hochschild:alggroupandliealgebra}, the Lie algebra
of $G$ is $\text{Ext}_{\mathcal{P}(G)}^1({\Bbbk},{\Bbbk})=\text{Der}(\mathcal{P}(G),{\Bbbk})$.

Let $G$ be a Lie group. The algebra of smooth maps on $G$, $C^\infty(G)$, 
is a module algebra over the group ring $\mathbb{R}[G]$, but is not a bialgebra
(except when $G$ is finite and discrete).
However there is still an analogue of the inclusion $i$: the composite
$$
T_e(G)\build\rightarrow_{lift}^{\cong}\text{Hom}_{mod-\mathbb{R}[G]}(C^\infty(G),C^\infty(G))\cap\text{Der}(C^\infty(G))\subset \text{Der}(C^\infty(G)).
$$
Here $lift$ is the isomorphism between the tangent space and the right
invariant vector fields on $G$.
\begin{proof}
Consider the inverse isomorphisms
$proj:\text{Hom}_{A-comod}(A^n,A)\buildrel{\cong}\over\rightarrow (A^{\otimes n})^\vee$
and $lift: (A^{\otimes n})^\vee\buildrel{\cong}\over\rightarrow \text{Hom}_{A-comod}(A^n,A)$
given in the proof of Theorem~\ref{ext sous algebre de Gerstenhaber de Hochschild}.
Let $\mathcal{O}$ denote the linear operad with multiplication
such that $proj:
\mathcal{E}nd_{A-comod}(A)\buildrel{\cong}\over\rightarrow \mathcal{O}
$
is an isomorphism of linear operads with multiplication.
Explicitly, for $f\in\mathcal{O}(m)=(A^{\otimes m})^\vee$
and $g\in\mathcal{O}(n)=(A^{\otimes n})^\vee$, $f\circ_i g$ is given by
\begin{multline*}
f\circ_i g(a_1\otimes\dots\otimes a_{m+n-1})=\\
f(a_1\otimes\dots\otimes a_{i-1}\otimes a_i^{(1)}\dots a_{i+n-1}^{(1)}g(a_i^{(2)}\otimes\dots a_{i+n-1}^{(2)})\otimes a_{i+n}\otimes\dots\otimes a_{m+n-1}
\end{multline*}
where $\Delta a_j=a_j^{(1]}\otimes a_j^{(2)}$.
The identity element of $\mathcal{O}$ is the counit of $A$,
$\varepsilon\in A^\vee=\mathcal{O}(1)$.
The multiplication of $\mathcal{O}$ is the composite $\varepsilon\circ\mu\in
(A\otimes A)^\vee=\mathcal{O}(2)$ and the unit is $id_{\Bbbk}\in (A^{\otimes 0})^\vee=\mathcal{O}(0)$.
In particular, $\circ_1:\mathcal{O}(1)\otimes\mathcal{O}(1)\rightarrow \mathcal{O}(1)$
is the multiplication of $A^\vee$,
$\mu_{A^\vee}:A^\vee\otimes A^\vee\rightarrow A^\vee$
obtained by dualizing the diagonal of $A$.
Therefore, by~(\ref{crochet en degre un}), the Lie algebra $\mathcal{C}^1(\mathcal{O})$ is just the Lie algebra
associated to the associative algebra $A^\vee$.
The composite $\mathcal{O}\build\rightarrow_{\cong}^{lift}
\mathcal{E}nd_{A-comod}(A)\subset \mathcal{E}nd_{{\Bbbk}-mod}(A)
$
is an injective morphism of linear operads with multiplication.
Therefore
 this composite 
$\mathcal{C}^*(\mathcal{O})\build\rightarrow_{\cong}^{lift}
\mathcal{C}^*(\mathcal{E}nd_{A-comod}(A))\subset \mathcal{C}^*(\mathcal{E}nd_{{\Bbbk}-mod}(A))
$
is an injective morphism of differential graded Lie algebras.
In degre $1$, this composite
$\mathcal{O}(1)\hookrightarrow  \mathcal{E}nd_{{\Bbbk}-mod}(A)(1)=\text{Hom}_{{\Bbbk}-mod}(A,A)$
is the injective morphism of (associative) algebras, mapping $f:A\rightarrow {\Bbbk}$
to $(A\otimes f)\circ \Delta_A$,
given by~\cite[I.Proposition 2.1]{Hochschild:alggroupandliealgebra}.
Restricted at the cycles in degre $1$, this composite gives the injective
morphism of Lie algebras
$
\text{Der}(A,{\Bbbk})\buildrel{i}\over\hookrightarrow 
\text{Der}(A,A)
$
considered in~\cite[p. 36]{Hochschild:alggroupandliealgebra}.
\end{proof}
Let us prove that Theorem~\ref{ext sous algebre de Gerstenhaber de Hochschild}
is the dual of Theorem~\ref{cotor sous algebre de Gerstenhaber de Hochschild}.
\begin{lemma}\label{comparaison bar cobar hochschild}
Let $C$ be a coalgebra with coaugmentation $\eta:{\Bbbk}\rightarrow C$.
Let $A=C^\vee$ be the dual algebra with augmentation $\varepsilon:A\rightarrow {\Bbbk}$.
Then

i) the linear map
$\Gamma: \mathcal{C}o\mathcal{E}nd_{{\Bbbk}-mod}(C))\rightarrow \mathcal{E}nd_{{\Bbbk}-mod}(A)$,
mapping $f:C\rightarrow C^{\otimes n}$ to the composite
$
A^{\otimes n}\rightarrow (C^{\otimes n})^\vee\buildrel{f^\vee}\over\rightarrow A
$, is a morphism of linear operads with multiplication,

ii) the linear map $\phi:\Omega C\rightarrow (BA)^\vee$, such that
$\phi(c_1\otimes\dots\otimes c_n)$ is the form on $
A^{\otimes n}$, mapping $\varphi_1\otimes\dots\otimes\varphi_n$ to the product
$\varphi_1(c_1)\dots\varphi_n(c_n)$,
is a morphism of differential graded algebras.

iii) We have the commutative diagram of differential graded algebras
$$\xymatrix{
\mathcal{C}_{coalg}^*(C,C)\ar[r]^{\mathcal{C}_{coalg}^*(C,\eta)}\ar[d]_\Gamma
& \Omega C\ar[d]^\phi\\
\mathcal{C}^*(A,A)\ar[r]_{\mathcal{C}^*(A,\varepsilon)}
& (BA)^\vee
}
$$
If $C$ is ${\Bbbk}$-free of finite type then both $\Gamma$ and $\phi$ are isomorphisms.
\end{lemma}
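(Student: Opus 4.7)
My plan is to handle the three parts in order, exploiting Property~\ref{functor monoidal induit morphism d'operades} for (i), doing a direct verification for (ii), and reducing (iii) to evaluation on cochains.

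For part (i), I would observe that $\Gamma$ is exactly the morphism of operads with multiplication produced by Property~\ref{functor monoidal induit morphism d'operades} applied to the contravariant dualization functor $(-)^\vee$. More precisely, $(-)^\vee:({\Bbbk}\text{-mod})^{op}\rightarrow {\Bbbk}\text{-mod}$ is a ${\Bbbk}$-linear monoidal functor with natural transformation $\psi:V^\vee\otimes W^\vee\rightarrow (V\otimes W)^\vee$ sending $\varphi\otimes\psi$ to $v\otimes w\mapsto \varphi(v)\psi(w)$. Since the comonoid $C$ is a monoid in $({\Bbbk}\text{-mod})^{op}$, its image $A=C^\vee$ is a monoid in ${\Bbbk}\text{-mod}$, and the induced operad morphism $\mathcal{E}nd_{({\Bbbk}\text{-mod})^{op}}(C)=\mathcal{C}o\mathcal{E}nd_{{\Bbbk}\text{-mod}}(C)\rightarrow \mathcal{E}nd_{{\Bbbk}\text{-mod}}(A)$ unwinds to $f\mapsto f^\vee\circ\psi^{(n)}$, which is the definition of $\Gamma$. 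The fact that it preserves multiplications and identities is then automatic from the property (one checks that $\psi^{(2)}$ maps $\mu_A$ to $\Delta^\vee$, etc.).

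For part (ii), both $\Omega C$ and $(BA)^\vee$ carry concatenation-type products, and $\phi$ visibly sends concatenation to concatenation: evaluating $\phi(c_1\otimes\dots\otimes c_m)\cdot\phi(c_{m+1}\otimes\dots\otimes c_{m+n})$ on $\varphi_1\otimes\dots\otimes\varphi_{m+n}$ splits at position $m$ and produces $\prod_j\varphi_j(c_j)$, which matches $\phi(c_1\otimes\dots\otimes c_{m+n})$. For compatibility with differentials, I would verify term by term that the dual under $\phi$ of inserting $\Delta$ at position $i$ is the same as inserting $\mu_A$ at position $i$ in $(BA)^\vee$ (using the identity $\varphi(c^{(1)})\psi(c^{(2)})=(\varphi\cdot\psi)(c)$) and that the cobar coaugmentation terms dualize to the bar augmentation terms via $\eta^\vee=\varepsilon$. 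The sign conventions match because both complexes use the same alternating sum. This sign bookkeeping is the main obstacle, but it is routine.

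For part (iii), the diagram commutes by a direct calculation on a cochain $f:C\rightarrow C^{\otimes n}$. The top-right path sends $f$ to $\phi(f(1))$, which on $\varphi_1\otimes\dots\otimes\varphi_n$ evaluates to $\varphi_1(c_1)\cdots\varphi_n(c_n)$ when $f(1)=c_1\otimes\dots\otimes c_n$. The left-bottom path sends $f$ to $\varepsilon\circ\Gamma(f)$, which equals $\Gamma(f)(\varphi_1\otimes\dots\otimes\varphi_n)(1)=\psi^{(n)}(\varphi_1\otimes\dots\otimes\varphi_n)(f(1))=\varphi_1(c_1)\cdots\varphi_n(c_n)$, using that the augmentation $\varepsilon:A=C^\vee\rightarrow{\Bbbk}$ is evaluation at $1=\eta(1)$. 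Finally, when $C$ is ${\Bbbk}$-free of finite type, the canonical map $\psi^{(n)}:A^{\otimes n}\rightarrow(C^{\otimes n})^\vee$ is an isomorphism, which makes $\Gamma$ an isomorphism degreewise; by the same argument applied to the double-dual, $\phi:C^{\otimes n}\rightarrow(A^{\otimes n})^\vee$ is an isomorphism degreewise as well.
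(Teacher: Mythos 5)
Your proposal is correct and follows essentially the same route as the paper: part (i) is obtained exactly as in the paper by applying Property~\ref{functor monoidal induit morphism d'operades} to the monoidal dualization functor $^\vee:({\Bbbk}\text{-mod})^{op}\rightarrow{\Bbbk}\text{-mod}$ with the natural transformation $\psi$. For (ii) and (iii) the paper simply declares them ``well-known'' and ``easy to check''; your term-by-term verification and the evaluation argument using $\varepsilon=\eta^\vee$ (together with the finite-type observation that $\psi^{(n)}$ and the double-dual map are isomorphisms) are exactly the routine checks being alluded to.
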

\begin{proof}
Let $\psi:V^\vee\otimes W^\vee\rightarrow (V\otimes W)^\vee$ be the linear map,
mapping the tensor product $\varphi_1\otimes \varphi_2$ of a form on $V$ and
of a form on $W$, to the form on $V\otimes W$, also denoted $\varphi_1\otimes \varphi_2$,
mapping $v\otimes w$ to the product $\varphi_1(v)\varphi_2(w)$.
The functor $^\vee$ from the opposite category of ${\Bbbk}$-modules
to the category of ${\Bbbk}$-modules, mapping a ${\Bbbk}$-module $V$, to its dual
$V^\vee:=\text{Hom}(V,{\Bbbk})$ is a monoidal functor.
Therefore by applying Property~\ref{functor monoidal induit morphism d'operades},
we obtain i). ii) is well-known and iii) is easy to check.
\end{proof}
Note that in~\cite{Felix-Menichi-Thomas:GerstduaiHochcoh}, together with Felix
and Thomas, we gave a different proof that $H^*(\Gamma):HH^*_{coalg}(C,C)\rightarrow HH^*(A,A)$ is a morphism
of Gerstenhaber algebras.
\begin{proposition}\label{dualite entre inclusions Gerstenhaber}
Let $C$ be a bialgebra ${\Bbbk}$-free of finite type.
Let $A$ be the dual bialgebra.
Then the inclusions of Gerstenhaber algebras given by
Theorems~\ref{cotor sous algebre de Gerstenhaber de Hochschild} and~\ref{ext sous algebre de Gerstenhaber de Hochschild} fit into the commutative diagram of Gerstenhaber algebras.
$$
\xymatrix{
\text{Cotor}_C^*({\Bbbk},{\Bbbk})\ar[r]\ar[d]_{H^*(\phi)}^\cong
&HH_{coalg}^*(C,C)\ar[d]^{H^*(\Gamma)}_\cong\\
\text{Ext}_A^*({\Bbbk},{\Bbbk})\ar[r]
&HH^*(A,A)
}
$$
\end{proposition}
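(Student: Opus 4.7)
The plan is to lift the desired commutative square of Gerstenhaber algebras to a commutative square of operads with multiplication, from which the conclusion follows by applying the cosimplicial-to-cochain-complex construction of Theorem~\ref{Theoreme Gerstenhaber} and then passing to cohomology. The observation driving this is that both vertical maps $\phi$ and $\Gamma$ arise from a single contravariant monoidal functor, namely the dualizing functor $D := (-)^\vee$.

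First, I would recall that the map $\Gamma$ of Lemma~\ref{comparaison bar cobar hochschild} i) is exactly the output of Property~\ref{functor monoidal induit morphism d'operades} applied to $D$, viewed as a covariant ${\Bbbk}$-linear monoidal functor $({\Bbbk}\text{-mod})^{op} \to {\Bbbk}\text{-mod}$, and to the monoid $C$ in $({\Bbbk}\text{-mod})^{op}$. Under our finiteness assumption, $D$ also restricts to a monoidal equivalence between the free-of-finite-type parts of $(C\text{-mod})^{op}$ and $A\text{-comod}$, and it sends the $C$-module coalgebra $C$ to the $A$-comodule algebra $A$. A second application of Property~\ref{functor monoidal induit morphism d'operades} then yields a morphism of operads with multiplication
$$D_\ast: \mathcal{C}o\mathcal{E}nd_{C\text{-mod}}(C) \to \mathcal{E}nd_{A\text{-comod}}(A).$$

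Second, I would verify that the square of morphisms of operads with multiplication
$$
\xymatrix{
\mathcal{C}o\mathcal{E}nd_{C\text{-mod}}(C) \ar[r]^{i_C}\ar[d]_{D_\ast} & \mathcal{C}o\mathcal{E}nd_{{\Bbbk}\text{-mod}}(C) \ar[d]^\Gamma \\
\mathcal{E}nd_{A\text{-comod}}(A) \ar[r]_{j_A} & \mathcal{E}nd_{{\Bbbk}\text{-mod}}(A)
}
$$
commutes, where $i_C$ and $j_A$ are the forgetful inclusions used in the proofs of Theorems~\ref{cotor sous algebre de Gerstenhaber de Hochschild} and~\ref{ext sous algebre de Gerstenhaber de Hochschild} to define the horizontal maps of the target diagram. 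This should be essentially tautological: both composites are the instance of Property~\ref{functor monoidal induit morphism d'operades} obtained by composing $D$ with the appropriate forgetful functors, and hence agree on underlying ${\Bbbk}$-linear maps.

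The main obstacle, and last step, is to identify $D_\ast$ with $\phi$ under the isomorphisms of operads with multiplication $ext: \Omega C \to \mathcal{C}o\mathcal{E}nd_{C\text{-mod}}(C)$ and $lift: (BA)^\vee \to \mathcal{E}nd_{A\text{-comod}}(A)$ provided by the proofs of Theorems~\ref{cotor sous algebre de Gerstenhaber de Hochschild} and~\ref{ext sous algebre de Gerstenhaber de Hochschild}. I expect this to reduce to a direct calculation: for $c_1 \otimes \cdots \otimes c_n \in C^{\otimes n}$, both $D_\ast(ext(c_1 \otimes \cdots \otimes c_n))$ and $lift(\phi(c_1 \otimes \cdots \otimes c_n))$ evaluated on $\varphi_1 \otimes \cdots \otimes \varphi_n \in A^{\otimes n}$ should yield the linear form $c \mapsto \varphi_1(c^{(1)} c_1) \cdots \varphi_n(c^{(n)} c_n)$ on $C$, where $c^{(1)} \otimes \cdots \otimes c^{(n)} = \Delta^{n-1}(c)$; the match on the $lift(\phi(\cdot))$ side uses the defining identity $a^{(1)}(x)\, a^{(2)}(y) = a(xy)$ in $A = C^\vee$ together with the convolution product formula $(a \cdot b)(c) = a(c^{(1)}) b(c^{(2)})$. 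Once this identification is in hand, applying $\mathcal{C}^*$ and passing to cohomology yields the desired commutative square of Gerstenhaber algebras, whose vertical maps are isomorphisms because $\phi$ and $\Gamma$ are cochain isomorphisms under our finiteness hypothesis (Lemma~\ref{comparaison bar cobar hochschild}).
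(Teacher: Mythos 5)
Your proposal is correct and follows essentially the same route as the paper: the paper also applies Property~\ref{functor monoidal induit morphism d'operades} to the restricted dualizing functor to get the operad morphism $\Gamma_F$ (your $D_\ast$), uses the same tautologically commuting square with the inclusions $i_C$, $i_A$, and then closes the argument by identifying the composites with $ev=ext^{-1}$ and $proj=lift^{-1}$ via Lemma~\ref{comparaison bar cobar hochschild}~iii). Your explicit formula verification that $D_\ast\circ ext=lift\circ\phi$ is just a hands-on substitute for that last citation, and your computation is correct.
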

\begin{proof}
Since $C$ is an algebra ${\Bbbk}$-free of finite type,
the dualizing functor $^\vee$, defined in the proof of
Lemma~\ref{comparaison bar cobar hochschild},
restrict to a functor $F$ from the opposite category of left $C$-modules
to the category of left $A$-comodules.
If $M$ and $N$ are left $C$-modules, 
 $\psi:M^\vee\otimes N^\vee\rightarrow (M\otimes N)^\vee$
is a morphism of left $A$-comodules. Therefore by Property~\ref{functor monoidal induit morphism d'operades},
we obtain the morphism of linear operads with multiplication $\Gamma_F:
\mathcal{C}o\mathcal{E}nd_{C-mod}(C))\rightarrow \mathcal{E}nd_{A-comod}(A)$.
Consider the two commutatives squares
$$
\xymatrix{
\mathcal{C}o\mathcal{E}nd_{C-mod}(C)\ar[r]^{i_C}\ar[d]_{\Gamma_F}^\cong
& \mathcal{C}o\mathcal{E}nd_{{\Bbbk}-mod}(C)\ar[d]_{\Gamma}^\cong
\ar[r]^{\mathcal{C}_{coalg}^*(C,\eta)}
&\Omega C\ar[d]^{\phi}_\cong\\
\mathcal{E}nd_{A-comod}(A)\ar[r]^{i_A}
&\mathcal{E}nd_{{\Bbbk}-mod}(A)\ar[r]_{\mathcal{C}^*(A,\varepsilon)}
&(BA)^\vee }$$
The left square commutes by definition of $\Gamma_F$ since the two
horizontal maps $i_C$ and $i_A$ are just the inclusions.
Part iii) of Lemma~\ref{comparaison bar cobar hochschild} says that the right
square commutes.
The composite $\mathcal{C}o\mathcal{E}nd_{C-mod}(C)\buildrel{i_C}\over\hookrightarrow
\mathcal{C}o\mathcal{E}nd_{{\Bbbk}-mod}(C)\buildrel{\mathcal{C}_{coalg}^*(C,\eta)}\over\rightarrow\Omega C$ is the isomorphism $ev$ considered in the proof of Theorem~\ref{cotor sous algebre de Gerstenhaber de Hochschild}.
The composite $\mathcal{E}nd_{A-comod}(A)\buildrel{i_A}\over\hookrightarrow
\mathcal{E}nd_{{\Bbbk}-mod}(A)\buildrel{\mathcal{C}^*(A,\varepsilon)}\over\rightarrow(BA)^\vee$ is the isomorphism $proj$ considered in the proof of Theorem~\ref{ext sous algebre de Gerstenhaber de Hochschild}.
Therefore,we have the commutative square of linear operads with multiplication
$$
\xymatrix{
\Omega C\ar[r]^{i_c\circ ev^{-1}}\ar[d]_{\phi}^\cong
&\mathcal{C}_{coalg}^*(C,C)\ar[d]^{\Gamma}_\cong\\
(BA)^{\vee}\ar[r]^{i_A\circ proj^{-1}}
&\mathcal{C}^*(A,A)
}
$$
Applying homology, we obtain the Proposition.
\end{proof}
Let $H$ be a finite dimensional Hopf algebra. 
Let $D(H)$ be the Drinfeld double of $H$.
Then Taillefer~\cite{Taillefer:InjectiveHopf} proved that
the Gerstenhaber-Schack cohomology of $H$, $H_{GS}(H,H)$
is isomorphic as graded algebras to $\text{Ext}_{D(H)^{op}}(\mathbb{F},\mathbb{F})$.
Since $D(H)$ is a Hopf algebra, by Theorem~\ref{ext sous algebre de Gerstenhaber de Hochschild}, Farinati and
Solotar~\cite{Farinati-Solotar:GstrcohHopf} have obtained a Gerstenhaber
algebra structure on $\text{Ext}_{D(H)^{op}}(\mathbb{F},\mathbb{F})=H_{GS}(H,H)$.
But Taillefer using a
braiding~\cite[Beginning of Section 5]{Taillefer:InjectiveHopf}
shows that the Lie bracket in this Gerstenhaber algebra structure
is trivial. The Drinfeld double $D(H)$ is a braided Hopf algebra.
Therefore, following the proof of Taillefer, it should be easy to
prove 
\begin{conjecture}\label{conjecture crochet trivial pour bialgebre tresse}
Let $A$ be braided bialgebra.
Then the Lie algebra of the Gerstenhaber algebra $\text{Ext}_A^*(\mathbb{F},\mathbb{F})$ given by Theorem~\ref{ext sous algebre de Gerstenhaber de Hochschild}
is trivial.
\end{conjecture}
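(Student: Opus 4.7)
The plan is to adapt Taillefer's argument~\cite{Taillefer:InjectiveHopf} for the Drinfeld double to an arbitrary braided bialgebra $A$. The main input is the interpretation, given in the proof of Theorem~\ref{ext sous algebre de Gerstenhaber de Hochschild}, of the Gerstenhaber structure as coming from the endomorphism operad of the monoid $A$ in the monoidal category of left $A$-comodules. When $A$ is braided, this monoidal category inherits a braiding $c_{V,W}:V\otimes W\buildrel{\cong}\over\rightarrow W\otimes V$, and, crucially, $A$ becomes a \emph{braided-commutative} monoid in it: $\mu\circ c_{A,A}=\mu$. The conjecture should then follow from an operadic upgrade of Hirsch's formula forced by this braided commutativity.

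Concretely, the first step is to exhibit, for $f\in \mathcal{E}nd_{A\text{-comod}}(A)(m)$ and $g\in \mathcal{E}nd_{A\text{-comod}}(A)(n)$, a chain-level cup-one type homotopy
$$
h(f,g):=\sum_{i=1}^{m}\pm \bigl((\mu\circ_{1} f)\circ_{i}g\bigr),
$$
formed in the braided category by inserting $g$ through the braiding, whose coboundary computes $f\cup g-(-1)^{|f||g|}g\cup' f$, where $g\cup' f$ denotes the cup product formed using $c$ to move $g$ past $f$. Because $A$ is braided-commutative and the operations live in $A\text{-comod}$, braiding $g$ past $f$ produces a cochain cohomologous to $g\cup f$; this gives the graded commutativity of the cup product at the level of cohomology, a fact already expected since $\text{Ext}^{*}_{A}(\mathbb{F},\mathbb{F})$ is graded commutative.

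The second and crucial step is to show that the Gerstenhaber bracket $\{f,g\}=f\overline{\circ}g-(-1)^{(m-1)(n-1)}g\overline{\circ}f$, built out of the partial compositions~(\ref{definition du composition}), is killed by a second braided homotopy. Following the strategy of Taillefer~\cite[Section~5]{Taillefer:InjectiveHopf}, I would write $f\overline{\circ}g$ as an operadic insertion in the endomorphism operad, then use the braiding $c$ to rewrite each $f\circ_{i}g$ modulo a coboundary as an insertion made at position~$1$ twisted by $c$; the sum telescopes and, using $\mu\circ c_{A,A}=\mu$, cancels against $(-1)^{(m-1)(n-1)}g\overline{\circ}f$. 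More conceptually, one should check that the whole operad $\mathcal{E}nd_{A\text{-comod}}(A)$ receives, via the braiding, an action of a chain model of the little $2$-discs operad, refining the operad-with-multiplication structure; this would simultaneously prove the triviality claimed here and produce the degree $-2$ bracket predicted in Conjecture~\ref{conjecture ext 3-algebre}, realising the expected ``Kontsevich shift'' $E_{1}\leadsto E_{2}$.

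The hard part is sign and coherence bookkeeping: one must verify that the braided homotopies indeed cancel $\{f,g\}$ rather than producing a nonzero braided commutator, and that the resulting structure is compatible with the cosimplicial differential~(\ref{definition du module cosimplicial}). The natural framework is a string-diagram calculation in the braided category, exactly as in Taillefer's paper; the subtle point is that the bar construction on a braided bialgebra is only a differential graded algebra in the braided sense, so the classical Gerstenhaber-Voronov identity $\{f,g\}=\pm d(f\overline{\circ}g)+\cdots$ must be re-derived inside the braided category before one can conclude that $\{f,g\}$ is a coboundary.
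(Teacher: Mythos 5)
You should first note what the paper itself does: this statement is recorded there as a \emph{conjecture}, and the paper proves it only when $A$ is a cocommutative Hopf algebra, by a completely different and indirect route --- cocommutativity forces $S^2=\mathrm{id}$, so Theorem~\ref{ext BV algebre} makes $\text{Ext}^*_A(\mathbb{F},\mathbb{F})$ a Batalin-Vilkovisky algebra, the operator $B$ of that structure is trivial by \cite[Theorem 4.1]{Khalkhali-Rangipour:newcyclic}, and the bracket then vanishes by the relation~(\ref{relation BV}). Your proposal aims at the general braided case, but as written it is a programme rather than a proof: the decisive point --- that your second braided homotopy exhibits $\{f,g\}$ as a coboundary (``the sum telescopes and cancels against $(-1)^{(m-1)(n-1)}g\overline{\circ}f$'') --- is exactly the content of the conjecture and is asserted, not established; you yourself postpone the re-derivation of the Gerstenhaber--Voronov identity in the braided setting and all sign and coherence verifications. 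Nothing in the write-up rules out that your construction produces a nonzero braided commutator rather than zero.

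There is also a concrete obstruction in the setup itself. The Gerstenhaber structure of Theorem~\ref{ext sous algebre de Gerstenhaber de Hochschild} comes from the operad $\mathcal{E}nd_{A-comod}(A)$ built in the category of left $A$-\emph{comodules}; a braided (that is, quasitriangular, which is how the paper uses the word) structure on $A$ braids the category of $A$-\emph{modules}, not $A$-comodules --- for comodules you need a coquasitriangular form $r$, or you must dualize and control finiteness. Worse, even in the coquasitriangular case the regular comodule algebra $A$ is \emph{not} braided-commutative: $\mu\circ c_{A,A}=\mu$ fails in general; what coquasitriangularity gives is only the convolution identity $r(a^{(1)}\otimes b^{(1)})\,a^{(2)}b^{(2)}=b^{(1)}a^{(1)}\,r(a^{(2)}\otimes b^{(2)})$, and honest braided commutativity appears only after Majid's transmutation, which changes both the product and the coaction, hence the operad you are working in. Since both of your homotopies are predicated on $\mu\circ c_{A,A}=\mu$, the argument breaks at its first step; to salvage it you would have to redo Taillefer's computation \cite{Taillefer:InjectiveHopf} for the transmuted object and then identify the resulting bracket with the operadic bracket of Theorem~\ref{ext sous algebre de Gerstenhaber de Hochschild}, none of which is carried out here.
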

\begin{proof}[Proof when $A$ is a cocommutative Hopf algebra]
Let $A$ be a cocommutative Hopf algebra. Since $A$ is cocommutative,
the antipode $S$ is involutive. Therefore by Theorem~\ref{ext BV algebre},
$\text{Ext}^*_A(\mathbb{F},\mathbb{F})$ is a Batalin-Vilkovisky algebra.
By~\cite[Theorem 4.1]{Khalkhali-Rangipour:newcyclic}, the operator $B$ of this Batalin-Vilkovisky algebra is trivial. Therefore by~(\ref{relation BV}),
the Lie bracket is null.
\end{proof}
In~\cite{shoikhet:hopftetra}, Shoikhet mentions the following conjecture of Kontsevich.
\begin{conjecture}(Kontsevich)
Let $H$ be a bialgebra.
Then $H_{GS}(H,H)$ is a $3$-algebra, i. e. \cite[Theorem p. 26-7]{Markl-Shnider-Stasheff:opeatp} an algebra over the homology of the little $3$-cubes operad, $\mathcal{C}_3$.
\end{conjecture}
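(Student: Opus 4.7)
The plan is to upgrade the paper's slogan ``operad with multiplication $\Rightarrow$ Gerstenhaber algebra'' to a two-dimensional analogue ``operad with bimultiplication $\Rightarrow$ $E_3$-algebra'', specialized to the Gerstenhaber-Schack bicomplex $C_{GS}(H,H)$. Following the philosophy of Sections 3--4 of this paper, a bialgebra $H$ should be viewed simultaneously as a monoid in the monoidal category of $H$-bicomodules and as a comonoid in the monoidal category of $H$-bimodules. Each point of view yields an operad with multiplication, acting respectively along the algebra and coalgebra directions of the bicomplex, and the bialgebra axiom encodes the compatibility between the two. Via the Dunn-Tamarkin additivity $E_1 \otimes E_2 \simeq E_3$, these commuting data should combine to produce an action of a chain model of $\mathcal{C}_3$.

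Concretely, I would first realize $C_{GS}^{p,q}(H,H) = \text{Hom}(H^{\otimes p}, H^{\otimes q})$ as the operations of a bigraded ``bioperad'' carrying two commuting families of composition maps: horizontal compositions $\circ_i^h$ built from $\mu$ (analogous to the operations on $\mathcal{E}nd_{H\text{-comod}}(H)$ appearing in the proof of Theorem~\ref{ext sous algebre de Gerstenhaber de Hochschild}), and vertical compositions $\circ_j^v$ built from $\Delta$ (analogous to those on $\mathcal{C}o\mathcal{E}nd_{H\text{-mod}}(H)$ appearing in the proof of Theorem~\ref{cotor sous algebre de Gerstenhaber de Hochschild}). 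Two cup products, two brace families, and the corresponding Dyer-Lashof-type operations (cf. Remark~\ref{operation sur Steenrod sur cohomologie de Hochschild}) should then assemble, via a Barratt-Eccles or surjection operad model, into an action of a chain model of $\mathcal{C}_3$ on $C_{GS}(H,H)$.

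The main obstacle, and the reason this conjecture has remained famously open, is the verification of the higher coherence relations between the horizontal and vertical structures. The bialgebra axiom $\Delta \circ \mu = (\mu \otimes \mu) \circ \tau_{23} \circ (\Delta \otimes \Delta)$ guarantees commutativity of the two operadic structures only at the lowest level; the full $E_3$-action requires an infinite tower of explicit higher homotopies controlling how horizontal and vertical braces interact, and in particular a chain-level avatar of the Eckmann-Hilton argument that produces the extra $E_1$-factor. Partial progress has been made by Shoikhet via tetramodules and by Kaufmann and Zuniga via polytopal chain models, but a complete chain-level construction in full generality remains elusive. A realistic attack would combine these combinatorial approaches with the clean operadic reformulation above, probably in the framework of $2$-fold monoidal categories or higher PROP-like structures, to which the techniques of Sections 3--4 of the present paper apply essentially verbatim in each variable separately.
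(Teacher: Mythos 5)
This statement is a conjecture in the paper: no proof is given (the text only records Shoikhet's announcement of a proof in the Hopf-algebra case, via tetramodules), so there is no argument of the paper to compare yours against. More importantly, what you have written is not a proof either, and you say so yourself: the entire mathematical content of the conjecture is concentrated in the step you defer, namely producing a chain-level action of (a model of) $\mathcal{C}_3$, i.e. the full tower of coherent higher homotopies between the ``horizontal'' and ``vertical'' structures on $\mathcal{C}_{GS}^{p,q}(H,H)=\text{Hom}(H^{\otimes p},H^{\otimes q})$. Declaring that these should assemble ``via a Barratt--Eccles or surjection operad model'' is a restatement of the conjecture, not an argument.

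Two specific points where the sketch is misleading. First, the bialgebra axiom does not make the two composition structures commute even ``at the lowest level'': the horizontal brace operations (built from $\mu$, as in the operad $\mathcal{E}nd_{H\text{-comod}}(H)$ of Theorem~\ref{ext sous algebre de Gerstenhaber de Hochschild}) and the vertical ones (built from $\Delta$, as in $\mathcal{C}o\mathcal{E}nd_{H\text{-mod}}(H)$ of Theorem~\ref{cotor sous algebre de Gerstenhaber de Hochschild}) only interchange up to nontrivial homotopies already on cochains, and the Gerstenhaber--Schack total differential mixes the two directions; so there is no strict $E_1\otimes E_1$-interchange to feed into an Eckmann--Hilton or Dunn-type argument. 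Second, Dunn--Tamarkin additivity $E_1\otimes E_2\simeq E_3$ is a statement about operads acting compatibly in a homotopy-coherent sense; to invoke it you must first exhibit an action of (a cofibrant model of) the tensor product, which again is exactly the missing coherence data. A correct treatment in the Hopf case (Shoikhet) goes through a genuinely different mechanism --- identifying $H_{GS}(H,H)$ with a deformation-theoretic or $2$-fold-monoidal/tetramodule object for which an $E_3$-structure can be constructed --- rather than by superposing the two one-variable operads of this paper. As it stands, your text is a reasonable research program, but it neither proves the conjecture nor supplies the key construction it would need.
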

Shoikhet~\cite[Corollary 0.3]{shoikhet:hopftetra}
has announced that the proof of this conjecture when $H$ is a Hopf algebra.
We formulate the following related conjecture:
\begin{conjecture}\label{conjecture ext 3-algebre}
Let $A$ be braided bialgebra.
Then $\text{Ext}_A^*(\mathbb{F},\mathbb{F})$ is a $3$-algebra.
\end{conjecture}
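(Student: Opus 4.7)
The plan is to enrich the Gerstenhaber algebra structure on $\text{Ext}_A^*(\mathbb{F},\mathbb{F})$ provided by Theorem~\ref{ext sous algebre de Gerstenhaber de Hochschild} into a $3$-algebra, by exploiting the braiding $c:A\otimes A\to A\otimes A$ to produce the missing degree $-2$ bracket together with its higher-homotopy companions. The guiding analogy is topological: braided monoidal structures recognize double loop spaces, whereas symmetric ones (e.g.\ cocommutative Hopf algebras, which produce a genuine BV algebra by Theorem~\ref{ext BV algebre}) recognize infinite loop spaces. So in the braided case one expects an $E_3$-action rather than $E_2$ or $E_\infty$.

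First, I would establish Conjecture~\ref{conjecture crochet trivial pour bialgebre tresse} in an \emph{explicit} form: construct, using the braiding, a chain-level natural operation $h:\mathcal{O}(m)\otimes\mathcal{O}(n)\to\mathcal{O}(m+n-2)$ on the operad $\mathcal{O}=\mathcal{E}nd_{A\text{-comod}}(A)$ (the one that appears in the proof of Theorem~\ref{ext sous algebre de Gerstenhaber de Hochschild}), such that $d(h(f,g))\pm h(df,g)\pm h(f,dg) = \{f,g\}$. The formula for $h$ should be of the form $h(f,g)(a_1,\dots) = \pm f(\dots,g(c_\sigma(\cdots)),\dots)$ with $c_\sigma$ a braiding-induced permutation; this is where the shuffle combinatorics come in. Passing to cohomology gives both the vanishing of the degree $-1$ bracket and a canonical degree $-2$ bracket $[-,-]$ on $\text{Ext}_A^*(\mathbb{F},\mathbb{F})$, which is the first new piece of a $3$-algebra structure.

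Second, I would construct a morphism from a chain model of the little $3$-cubes operad $\mathcal{C}_3$ to the endomorphism operad of $\text{Ext}_A^*(\mathbb{F},\mathbb{F})$. The most tractable route is via Tamarkin-style formality: use the braiding to produce an explicit sequence of higher homotopies generalizing $h$, organize them into an action of a cofibrant $E_3$-resolution, and then transfer to cohomology. Alternatively, following Shoikhet's treatment of the Hopf case via tetramodules, one would model the operations by coloured operads whose algebras over a braided bialgebra naturally carry a $\mathcal{C}_3$-action, then dualize/restrict to $\text{Ext}$ via the inclusion into $HH^*(A,A)$.

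The main obstacle will be the verification of higher coherences: constructing the degree $-2$ bracket and showing it satisfies the graded Jacobi and a Poisson identity with the cup product is already non-trivial, and a genuine $3$-algebra structure requires infinitely many compatible brace-like operations together with all their relations. Even in the Hopf algebra case this required the whole tetramodule formalism of Shoikhet, and without an antipode the rigidifying tool (inverse of the braiding composed with $S\otimes S$) is unavailable, so one must work at the pro-operadic level. Checking that the resulting $E_3$-action refines the Gerstenhaber structure of Theorem~\ref{ext sous algebre de Gerstenhaber de Hochschild} — in particular that the $\pi_0$ and $\pi_1$ pieces recover the cup product and the (now trivial) degree $-1$ bracket — is the final compatibility that the proof must not neglect.
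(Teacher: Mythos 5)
This statement is a \emph{conjecture} in the paper (Conjecture~\ref{conjecture ext 3-algebre}): the paper offers no proof at all, only the remark that Shoikhet has announced a proof of the related Kontsevich conjecture in the Hopf-algebra case. So your text cannot be matched against a paper proof; it must stand on its own, and as a proof it has a genuine gap --- it is a research program, not an argument. None of the central objects is actually constructed. The chain homotopy $h:\mathcal{O}(m)\otimes\mathcal{O}(n)\to\mathcal{O}(m+n-2)$ that is supposed to trivialize the degree $-1$ bracket is only asserted to ``be of the form'' $\pm f(\dots,g(c_\sigma(\cdots)),\dots)$; no formula is written, no signs are fixed, and the identity $d(h(f,g))\pm h(df,g)\pm h(f,dg)=\{f,g\}$ is never verified. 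Note moreover that the vanishing of the degree $-1$ bracket is itself only Conjecture~\ref{conjecture crochet trivial pour bialgebre tresse}, which the paper proves only when $A$ is a cocommutative Hopf algebra (via the $B$-operator of Theorem~\ref{ext BV algebre} and Khalkhali--Rangipour); so the very first step of your plan already rests on an open statement, and even granting it you would only have a degree $-2$ bracket on cohomology, with its Jacobi identity and its Poisson compatibility with the cup product unchecked.

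The second half --- producing an honest action of (a chain model of) the little $3$-cubes operad --- is deferred entirely to ``Tamarkin-style formality'' or to Shoikhet's tetramodule formalism, neither of which is available in your setting as stated: Shoikhet's announced result concerns the Gerstenhaber--Schack cohomology of a \emph{Hopf} algebra, and the antipode is used there in an essential way, whereas a braided bialgebra need not have one. You acknowledge this obstruction but offer no substitute beyond the phrase ``work at the pro-operadic level,'' which is not a construction. A $3$-algebra structure consists of infinitely many operations subject to coherence relations; a conjectural degree $-2$ bracket together with a list of things one would like to verify does not produce them. In short, the proposal correctly identifies the expected picture (braided structures should yield $E_3$-actions, by analogy with double loop space recognition), which is exactly the heuristic already present in the paper, but it establishes nothing beyond what the paper itself leaves as a conjecture.
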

If $A$ is cocommutative, again this Lie algebra bracket (of degre $-2$ this time)
should vanish: modulo $p$, only the Steenrod or Dyer-Lashoff operations
on $\text{Ext}_A^*(\mathbb{F},\mathbb{F})$ (Remark~\ref{operation sur Steenrod sur cohomologie de Hochschild} and~\cite[Theorem 11.8]{maygeneral}) should be non trivial.

As an algebraic topologist, we find the following Corollaries of
Theorem~\ref{cotor sous algebre de Gerstenhaber de Hochschild}
and Theorem~\ref{ext sous algebre de Gerstenhaber de Hochschild}, highly interesting.
\begin{corollary}\label{lacet double sous algebre Gerstenhaber de Hochschild}
Let $X$ be a $2$-connected pointed topological space.
Denote by $\Omega_M X$ the pointed Moore loops on $X$.
Then the homology of the double loop spaces on $X$, $H_*(\Omega^2X)$, equipped
with the Pontryagin product,
is a sub Gerstenhaber algebra of $HH^*_{coalg}(S_*(\Omega_M X),S_*(\Omega_M X))$,
the Hochschild cohomology of the coalgebra $S_*(\Omega_M X)$.
\end{corollary}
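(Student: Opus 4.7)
The plan is to deduce this directly from Theorem~\ref{cotor sous algebre de Gerstenhaber de Hochschild} applied to the differential graded bialgebra $C = S_*(\Omega_M X)$, combined with the iterated cobar equivalence of Adams. First, the Moore loop space $\Omega_M X$ is a topological monoid, and the diagonal $\Omega_M X \to \Omega_M X \times \Omega_M X$ is a morphism of monoids; consequently $S_*(\Omega_M X)$ acquires the structure of a differential graded bialgebra, with multiplication given by the Eilenberg--Zilber/Pontryagin product and comultiplication given by the Alexander--Whitney coproduct coming from the diagonal. (The usual subtlety that Alexander--Whitney is only a morphism of algebras up to homotopy is harmless here; alternatively, since $X$ is $2$-connected, one may replace $S_*(\Omega_M X)$ by a quasi-isomorphic strict DG Hopf algebra model.) The proof of Theorem~\ref{cotor sous algebre de Gerstenhaber de Hochschild} is entirely operadic and transcribes verbatim to the DG setting, providing an inclusion of Gerstenhaber algebras
\[
\text{Cotor}^{*}_{S_*(\Omega_M X)}({\Bbbk},{\Bbbk}) \;\hookrightarrow\; HH^{*}_{coalg}(S_*(\Omega_M X),S_*(\Omega_M X)).
\]

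Next, I would invoke Adams' theorem: for a simply-connected pointed space $Y$, there is a natural quasi-isomorphism of DG algebras $\Omega S_*(Y) \simeq S_*(\Omega_M Y)$. Since $X$ is $2$-connected, the Moore loop space $\Omega_M X$ is simply-connected, so Adams' theorem applied to $Y=\Omega_M X$ produces a quasi-isomorphism of DG algebras
\[
\Omega\, S_*(\Omega_M X) \;\simeq\; S_*(\Omega^{2}_M X).
\]
Passing to homology yields an isomorphism of graded algebras $\text{Cotor}^{*}_{S_*(\Omega_M X)}({\Bbbk},{\Bbbk}) \cong H_*(\Omega^2 X)$, where the Pontryagin product on the right corresponds to the cup product coming from the operad with multiplication on the left.

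The last, and most delicate, step is to check that this Adams isomorphism also matches the Gerstenhaber bracket on $H_*(\Omega^2 X)$ of Cohen--Lada--May (coming from the little $2$-cubes operad $\mathcal{C}_2$) with the bracket on $\text{Cotor}^{*}_{S_*(\Omega_M X)}({\Bbbk},{\Bbbk})$ coming from the operad-with-multiplication structure on the cobar produced by Theorem~\ref{Theoreme Gerstenhaber}. This compatibility is precisely the content of the paper of Kadeishvili~\cite{Kadeishvili:cobarbialgebra} cited at the beginning of the introduction: Adams' cobar equivalence can be upgraded to a morphism intertwining the two operadic structures that define the Gerstenhaber bracket. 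Granting this, composing the Adams isomorphism with the inclusion above yields the desired inclusion of Gerstenhaber algebras $H_*(\Omega^2 X) \hookrightarrow HH^{*}_{coalg}(S_*(\Omega_M X),S_*(\Omega_M X))$. The main obstacle is exactly this operadic refinement of Adams' theorem; the algebra-level equivalence is classical, but matching the higher operations producing the Lie bracket lies deeper and is the reason Kadeishvili's work is invoked.
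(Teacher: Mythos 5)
Your first two steps are exactly the paper's argument: observe that $S_*(\Omega_M X)$ is a differential graded bialgebra, apply the (DG version of) Theorem~\ref{cotor sous algebre de Gerstenhaber de Hochschild} to get the inclusion of Gerstenhaber algebras $\text{Cotor}^{*}_{S_*(\Omega_M X)}({\Bbbk},{\Bbbk})\hookrightarrow HH^*_{coalg}(S_*(\Omega_M X),S_*(\Omega_M X))$, and then use Adams' cobar equivalence to identify $\text{Cotor}^{*}_{S_*(\Omega_M X)}({\Bbbk},{\Bbbk})$ with $H_*(\Omega^2 X)$ \emph{as graded algebras} (the paper also inserts the small step $\Omega X\simeq\Omega_M X$ as $H$-spaces to pass from Moore loops to ordinary loops, which you elide but which is harmless). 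At that point you are done: the corollary only asserts that $H_*(\Omega^2X)$, with its Pontryagin product, sits inside $HH^*_{coalg}$ as a sub Gerstenhaber algebra, i.e.\ that it inherits \emph{some} bracket extending the Pontryagin product — namely the one transported from $\text{Cotor}$.

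Your ``last, and most delicate, step'' is therefore based on a misreading of the statement, and it is also the one place where your argument rests on something that is not available. Identifying the transported bracket with the Cohen--Lada--May bracket coming from the little $2$-cubes operad is \emph{not} needed for the corollary; in the paper it is exactly the content of Conjecture~\ref{conjecture iso homologie lacets double cotor}, stated immediately after the corollary and left open. Moreover, your attribution of this compatibility to Kadeishvili is incorrect: the paper cites \cite{Kadeishvili:cobarbialgebra} only as a recent rediscovery of the Gerstenhaber algebra structure on $\text{Cotor}^*_H({\Bbbk},{\Bbbk})$ (originally due to Gerstenhaber--Schack), not as a proof that Adams' equivalence intertwines this bracket with Cohen's. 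So if you delete the final paragraph (or downgrade it to a remark that the comparison with Cohen's bracket is conjectural), your proof is correct and coincides with the paper's; as written, you have made the conclusion depend on an unproven and misattributed claim.
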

\begin{proof}
The bialgebra $C$ in Theorem~\ref{cotor sous algebre de Gerstenhaber de Hochschild} can be differential graded.
Since $\Omega_M X$ is a topological monoid, the (reduced normalized)
singular chains on $\Omega_M X$ form a differential graded bialgebra
$C=S_*(\Omega_M X)$.
Therefore, by Theorem~\ref{cotor sous algebre de Gerstenhaber de Hochschild},
$\text{Cotor}_{S_*(\Omega_M X)}({\Bbbk},{\Bbbk})$ is a sub Gerstenhaber
algebra of $HH^*_{coalg}(S_*(\Omega_M X),S_*(\Omega_M X))$.
By Adams cobar equivalence, there is an isomorphism of graded algebras
$\text{Cotor}_{S_*(\Omega_M X)}({\Bbbk},{\Bbbk})\cong H_*(\Omega_M\Omega_M X)$.
The inclusion of the (ordinary) pointed loops into the Moore loops
$\Omega X\buildrel{\approx}\over\hookrightarrow \Omega_M X$ is a both a homotopy
equivalence~\cite[p. 112-3]{Whitehead:eltsoht} and a morphism of $H$-spaces.
So as graded algebras, $H_*(\Omega_M\Omega_M X)$ is isomorphic to
$H_*(\Omega^2X)$.
\end{proof}
Corollary~\ref{lacet double sous algebre Gerstenhaber de Hochschild}
gives in particular a Gerstenhaber algebra structure
on $H_*(\Omega^2X)$ extending the Pontryagin product.
Of course, we believe that this Gerstenhaber algebra
structure coincides with the usual one given by Cohen
in~\cite{Cohen-Lada-May:homiterloopspaces}:
\begin{conjecture}\label{conjecture iso homologie lacets double cotor}
Let $X$ be a $2$-connected pointed topological space.
There is an isomorphism of Gerstenhaber algebras between the Gerstenhaber
algebra $\text{Cotor}_{S_*(\Omega_M X)}({\Bbbk},{\Bbbk})$ given by
Theorem~\ref{cotor sous algebre de Gerstenhaber de Hochschild}
 and the Gerstenhaber
algebra $H_*(\Omega^2X)$ given by Cohen
in~\cite{Cohen-Lada-May:homiterloopspaces}.
\end{conjecture}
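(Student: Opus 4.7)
The plan is to upgrade the Adams cobar equivalence to a quasi-isomorphism of $E_2$-algebras and then pass to homology. The two Gerstenhaber structures on $H_*(\Omega^2 X)$ to be identified are (a) Cohen's structure, induced by the natural action of the little $2$-disks operad on the double loop space $\Omega^2 X$, and (b) the structure supplied by Theorem~\ref{cotor sous algebre de Gerstenhaber de Hochschild}, obtained via the Adams isomorphism $\text{Cotor}_{S_*(\Omega_M X)}({\Bbbk},{\Bbbk})\cong H_*(\Omega^2 X)$ as the Gerstenhaber algebra on the cohomology of the operad with multiplication $\mathcal{C}o\mathcal{E}nd_{C\text{-mod}}(C)$ for $C=S_*(\Omega_M X)$.

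First, I would lift structure (b) to the chain level. By the Deligne conjecture proved by McClure--Smith, the cochain complex $\mathcal{C}^*(\mathcal{O})$ of any operad with multiplication $\mathcal{O}$ carries a natural action of an $E_2$-operad inducing the Gerstenhaber structure of Theorem~\ref{Theoreme Gerstenhaber} on its cohomology. Applied to $\mathcal{O}=\mathcal{C}o\mathcal{E}nd_{C\text{-mod}}(C)$, this endows $\Omega S_*(\Omega_M X)$ with an $E_2$-action whose induced Gerstenhaber structure on $\text{Cotor}_C({\Bbbk},{\Bbbk})$ is precisely (b).

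The decisive step is to refine the Adams quasi-isomorphism of DGAs $\Omega S_*(\Omega_M X)\to S_*(\Omega^2 X)$ (valid since $X$ is $2$-connected) into a quasi-isomorphism of $E_2$-algebras, where $S_*(\Omega^2 X)$ carries the standard chain-level $E_2$-action inducing Cohen's structure on homology. Once this is achieved, passing to homology yields the claimed isomorphism of Gerstenhaber algebras. One strategy is to exhibit a cosimplicial topological space $Y^\bullet$ whose totalization is weakly equivalent to $\Omega^2 X$ and whose levelwise singular chain complex realizes the cosimplicial module underlying $\mathcal{C}o\mathcal{E}nd_{C\text{-mod}}(C)$; the topological McClure--Smith $E_2$-structure on $\text{Tot}(Y^\bullet)$ would then identify both actions in a single framework. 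A more algebraic alternative is to compare Kadeishvili's brace (or Hirsch) algebra structure on the cobar construction of a DG bialgebra with a brace structure induced on $S_*(\Omega^2 X)$ by a chosen chain-level $E_2$-operad.

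The main obstacle is precisely this $E_2$-enhancement of Adams' equivalence. It is a topological analogue of Deligne's conjecture, close in spirit to the cyclic Deligne conjecture and to the comparison results linking Hochschild cochain operations with string topology operations. The Adams equivalence is well understood at the DGA level, but matching a full $E_2$-structure requires additional geometric input, either a configuration-type cosimplicial model of $\Omega^2 X$ whose chain complex coincides termwise with the cobar construction, or a careful term-by-term comparison of brace operations on both sides.
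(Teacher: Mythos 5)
The statement you are asked to prove is stated in the paper as Conjecture~\ref{conjecture iso homologie lacets double cotor}; the paper offers no proof of it, and your text does not supply one either. What you have written is a reduction, not a proof: after correctly observing that the McClure--Smith solution of the Deligne conjecture lifts the Gerstenhaber structure of Theorem~\ref{cotor sous algebre de Gerstenhaber de Hochschild} to a chain-level $E_2$-action on the cobar construction $\Omega S_*(\Omega_M X)\cong \mathcal{C}^*(\mathcal{C}o\mathcal{E}nd_{C\text{-mod}}(C))$, you state that ``the decisive step is to refine the Adams quasi-isomorphism of DGAs into a quasi-isomorphism of $E_2$-algebras'' and then acknowledge that this step is ``the main obstacle.'' That step is precisely the mathematical content of the conjecture: the Adams equivalence is only known here as a quasi-isomorphism of differential graded algebras, and nothing in your argument produces an $E_2$-map (or a zig-zag of $E_2$-quasi-isomorphisms) connecting the McClure--Smith action on the cobar side with the little $2$-disks action on $S_*(\Omega^2 X)$ inducing Cohen's structure. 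The two strategies you sketch for closing this gap --- a configuration-type cosimplicial model of $\Omega^2 X$ realizing the cobar construction levelwise, or a term-by-term comparison of brace operations --- are proposals for future work, with no construction, no verification of compatibility of coface/codegeneracy or brace identities, and no treatment of the coherence issues (e.g.\ that the chain-level $E_2$-structures on both sides are only well defined up to zig-zags of quasi-isomorphisms, so one must compare them in a homotopy-invariant way).

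So the verdict is that there is a genuine gap, and it is exactly the one you name yourself: without the $E_2$-enhancement of the cobar equivalence, all that follows from your argument is what the paper already proves in Corollary~\ref{lacet double sous algebre Gerstenhaber de Hochschild}, namely an isomorphism of graded \emph{algebras} $\text{Cotor}_{S_*(\Omega_M X)}({\Bbbk},{\Bbbk})\cong H_*(\Omega^2 X)$ respecting the Pontryagin product, together with the existence of two a priori unrelated Lie brackets on the two sides. Identifying the brackets requires the comparison you postpone, and until it is carried out the conjecture remains open; your write-up is a reasonable research plan but cannot be accepted as a proof.
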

Recall that the Gerstenhaber algebra on $H_*(\Omega^2X)$ is usually defined as follows:
the little $2$-cube operad $\mathcal{C}_2$ acts on the double loop space,
$\Omega^2 X$. So its homology $H_*(\Omega^2X)$ is an algebra over the homology of $\mathcal{C}_2$, i. e. is a Gerstenhaber algebra by
Cohen~\cite{Cohen-Lada-May:homiterloopspaces}.
\begin{corollary}\label{cohomology d'un espace sous algebre Gerstenhaber de Hochschild}
Let $X$ be a path-connected topological space.
Denote by $\Omega_M X$ the pointed Moore loops on $X$.
Then the cohomology of $X$, $H^*(X)$, equipped
with the cup product,
is a sub Gerstenhaber algebra of $HH^*(S_*(\Omega_M X),S_*(\Omega_M X))$,
the Hochschild cohomology of the algebra $S_*(\Omega_M X)$.
\end{corollary}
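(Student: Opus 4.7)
The plan is to mirror the proof of Corollary~\ref{lacet double sous algebre Gerstenhaber de Hochschild} with Theorem~\ref{ext sous algebre de Gerstenhaber de Hochschild} playing the role that Theorem~\ref{cotor sous algebre de Gerstenhaber de Hochschild} plays there. Since $\Omega_M X$ is a topological monoid, the (reduced normalized) singular chains $A := S_*(\Omega_M X)$ form a differential graded bialgebra: the product comes from the composition of Moore loops, while the coproduct is the Alexander-Whitney diagonal induced by the diagonal of the space. As in the previous corollary, Theorem~\ref{ext sous algebre de Gerstenhaber de Hochschild} extends to the differential graded setting and yields an inclusion of Gerstenhaber algebras
$$
\text{Ext}^*_A({\Bbbk},{\Bbbk}) \hookrightarrow HH^*(A,A).
$$

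The remaining task, and the essential content of the proof, is to identify $\text{Ext}^*_A({\Bbbk},{\Bbbk})$ with the cup product algebra $H^*(X)$. By the computation recalled in the proof of Theorem~\ref{ext sous algebre de Gerstenhaber de Hochschild}, one has $\text{Ext}^*_A({\Bbbk},{\Bbbk}) = H^*((BA)^\vee)$, where $BA$ is the reduced bar construction of the DG algebra $A$ equipped with its shuffle product, so that $(BA)^\vee$ is a DGA for the dual cup-type multiplication. The classical Milgram quasi-isomorphism of DG bialgebras $B S_*(G) \simeq S_*(BG)$, valid for any topological monoid $G$, is precisely the bar-side dual of the Adams cobar equivalence invoked in Corollary~\ref{lacet double sous algebre Gerstenhaber de Hochschild}. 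Specializing to $G = \Omega_M X$ and composing with the classical weak equivalence $B\Omega_M X \simeq X$, available for every path-connected $X$, one obtains a quasi-isomorphism of DG algebras $(BA)^\vee \simeq S^*(X)$, whence the desired isomorphism of graded algebras $\text{Ext}^*_A({\Bbbk},{\Bbbk}) \cong H^*(X)$.

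Combining this identification with the Gerstenhaber algebra inclusion from Theorem~\ref{ext sous algebre de Gerstenhaber de Hochschild} produces the claimed sub Gerstenhaber algebra embedding $H^*(X) \hookrightarrow HH^*(S_*(\Omega_M X), S_*(\Omega_M X))$. The only potentially delicate point is ensuring that the Milgram equivalence respects the shuffle/cup multiplicative structure for an arbitrary path-connected (as opposed to simply-connected) $X$; this is exactly the same kind of classical input as the Adams equivalence used in the preceding corollary, and no extra hypothesis on $X$ beyond path-connectedness is needed since one goes from $\Omega_M X$ back to $X$ via the classifying space construction.
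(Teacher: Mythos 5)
Your overall strategy is exactly the paper's: apply Theorem~\ref{ext sous algebre de Gerstenhaber de Hochschild} to the differential graded bialgebra $A=S_*(\Omega_M X)$, and then identify $\text{Ext}^*_A({\Bbbk},{\Bbbk})$ with $H^*(X)$ as graded algebras. The paper obtains this identification in one stroke by citing F\'elix--Halperin--Thomas (Theorem 7.2 ii) of their Handbook chapter on differential graded algebras in topology), which is precisely the natural chain-level comparison between $B S_*(\Omega_M X)$ and $S_*(X)$; your reconstruction of that input from the Milgram-type equivalence $BS_*(G)\simeq S_*(BG)$ together with $B\Omega_M X\simeq X$ is a legitimate equivalent route, and path-connectedness of $X$ is indeed what makes $\Omega_M X$ grouplike so that $B\Omega_M X\to X$ is a weak equivalence.

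One point in your write-up is genuinely wrong and should be corrected, although it does not invalidate the strategy. You say that $BA$ carries ``its shuffle product, so that $(BA)^\vee$ is a DGA for the dual cup-type multiplication'', and later that the delicate point is whether the Milgram equivalence ``respects the shuffle/cup multiplicative structure''. But $A=S_*(\Omega_M X)$ is not commutative, so its bar construction carries no shuffle product; and even when a shuffle product exists, dualizing a product on $BA$ would produce a coproduct on $(BA)^\vee$, not the product you need. The product on $(BA)^\vee$ whose cohomology is the cup product on $\text{Ext}^*_A({\Bbbk},{\Bbbk})=HH^*(A,{\Bbbk})$ (the one occurring in Theorem~\ref{ext sous algebre de Gerstenhaber de Hochschild}) is dual to the deconcatenation coproduct of $BA$. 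Accordingly, what your comparison $BS_*(\Omega_M X)\to S_*(B\Omega_M X)\to S_*(X)$ must preserve is the coalgebra structures---deconcatenation on the bar side, Alexander--Whitney on the space side---so that its dual is a morphism of differential graded algebras inducing $H^*(X)\cong \text{Ext}^*_A({\Bbbk},{\Bbbk})$ compatibly with cup products. That coalgebra-level compatibility is exactly what the classical results you invoke (and the theorem of F\'elix--Halperin--Thomas cited in the paper) provide; once ``shuffle product'' is replaced by ``deconcatenation coproduct'', your argument coincides with the paper's.
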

\begin{proof}
By Theorem~\ref{ext sous algebre de Gerstenhaber de Hochschild}
applied to $A=S_*(\Omega_M X)$,
$\text{Ext}^*_{S_*(\Omega_M X)}({\Bbbk},{\Bbbk})$ is a sub Gerstenhaber algebra of $HH^*(S_*(\Omega_M X),S_*(\Omega_M X))$.
Applying homology to~\cite[Theorem 7.2 ii)]{Felix-Halperin-Thomas:dgait},
gives the natural isomorphism of graded algebras
$$
H^*(X)\cong \text{Ext}^*_{S_*(\Omega_M X)}({\Bbbk},{\Bbbk}).
$$
\end{proof}
We believe that the Lie bracket on $
H^*(X)\cong \text{Ext}^*_{S_*(\Omega_M X)}({\Bbbk},{\Bbbk})
$ given by Corollary~\ref{cohomology d'un espace sous algebre Gerstenhaber de Hochschild}
is trivial since $S_*(\Omega_M X)$ is cocommutative up to homotopy in some
$E_\infty$-sense.

\section{Batalin-Vilkovisky algebras}\label{algebres de Batalin-Vilkovisky}
\begin{numerotation}\label{definition BV algebre}
A {\it Batalin-Vilkovisky algebra} is a Gerstenhaber algebra
$A$ equipped with a degree $-1$ linear map $B:A^{i}\rightarrow A^{i-1}$
such that $B\circ B=0$ and
\begin{equation}\label{relation BV} 
\{a,b\}=(-1)^{\vert a\vert}\left(B(a\cup b)-(B a)\cup b-(-1)^{\vert
  a\vert}a\cup(B b)\right)
\end{equation}
for $a$ and $b\in A$.
\end{numerotation}
\begin{definition}
A {\it cyclic operad} is a non-$\Sigma$-operad $\mathcal{O}$
equipped with linear maps $\tau_n:O(n)\rightarrow O(n)$
for $n\in\mathbb{N}$ such that
\begin{equation}
\forall n\in\mathbb{N},\quad\tau_n^{n+1}=id_{O(n)},
\end{equation}
\begin{equation}\label{deux cyclic}
 \forall m\geq 1,n\geq 1,
\quad\tau_{m+n-1}(f\circ_1 g)= \tau_n g\circ_n \tau_m f,
\end{equation}
\begin{equation}\label{trois cyclic}
\forall m\geq 2, n\geq 0, 2\leq i\leq m,
\quad\tau_{m+n-1}(f\circ_i g)=\tau_m f\circ_{i-1}g,
\end{equation}
for each $f\in O(m)$ and $g\in O(n)$. In particular, we have
$\tau_1 id=id$.
\end{definition}
\begin{definition}~\cite{MenichiL:BValgaccoHa}\label{operade cyclique avec multiplication}
A {\it cyclic operad with multiplication} is an operad
which is both an operad with multiplication and a cyclic operad
such that 
\begin{equation*}
\tau_2\mu=\mu.
\end{equation*}
\end{definition}
\begin{theorem}~\cite{MenichiL:BValgaccoHa}\label{Theoreme BV algebre}
If $\mathcal{O}$ is a cyclic operad with a multiplication then

a) the structure of cosimplicial module on $\mathcal{O}$ extends to a structure
of cocyclic module and

b) the Connes coboundary map $B$ on $\mathcal{C}^{*}(\mathcal{O})$
induces a natural structure of Batalin-Vilkovisky algebra
on the Gerstenhaber algebra $H^*(\mathcal{C}^{*}(\mathcal{O}))$.
\end{theorem}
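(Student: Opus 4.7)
The strategy is to upgrade the cosimplicial structure of~(\ref{definition du module cosimplicial}) to a cocyclic structure, and then harvest the Batalin--Vilkovisky structure from the general formalism of cocyclic modules via Connes's operator $B$. The natural candidate for the cyclic operator is simply $t_n := \tau_n : \mathcal{O}(n) \to \mathcal{O}(n)$, which by definition of a cyclic operad already satisfies $t_n^{n+1} = id_{\mathcal{O}(n)}$.

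For part~a), one must verify the cocyclic compatibilities between $t_n$ and the coface and codegeneracy maps of~(\ref{definition du module cosimplicial}). The identity $t_{n+1} \circ \delta_0 = \delta_{n+1}$ unpacks to $\tau_{n+1}(\mu \circ_2 f) = \mu \circ_1 f$, which follows from axiom~(\ref{trois cyclic}) with $i = 2$ combined with $\tau_2 \mu = \mu$. Dually, $t_{n+1} \circ \delta_{n+1} = \delta_n \circ t_n$ follows from~(\ref{deux cyclic}). The intermediate identities $t_{n+1} \circ \delta_i = \delta_{i-1} \circ t_n$ for $1 \leq i \leq n$ are applications of~(\ref{trois cyclic}), except for the boundary case $i = 1$ where~(\ref{deux cyclic}) must be used. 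The codegeneracy identities are verified analogously; the delicate case is $t_{n-1} \circ \sigma_0 = \sigma_{n-1} \circ t_n^2$, since the cyclic axioms are stated only for arities $\geq 1$. Here one has to adopt and consistently use the convention $\tau_0 e = e$, together with the unit axioms $\mu \circ_1 e = id = \mu \circ_2 e$ and $\tau_2 \mu = \mu$.

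For part~b), once~a) is established, Connes's general construction of the $B$-operator on the cochain complex of any cocyclic module defines a square-zero operator of degree $-1$ on $\mathcal{C}^*(\mathcal{O})$ which descends to $H^*(\mathcal{C}^*(\mathcal{O}))$. What remains is to verify the defining relation~(\ref{relation BV}) of Batalin--Vilkovisky algebras, namely that the bracket $\{-,-\}$ of~(\ref{definition du composition}) is recovered from $B$, $\cup$ and the graded Leibniz defect. I expect this to be the main obstacle. I would carry out the computation at the cochain level up to an explicit cochain homotopy: both $f \cup g$ of~(\ref{definition du cup}) and $f \overline{\circ} g$ of~(\ref{definition du composition}) are sums indexed by composition positions, and after applying $B$ one reorganizes the resulting sums of cyclic permutations via~(\ref{deux cyclic}) and~(\ref{trois cyclic}) to isolate $f \overline{\circ} g - (-1)^{(m-1)(n-1)} g \overline{\circ} f$ modulo Hochschild coboundaries. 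This is the operadic analogue of the classical chain-level Batalin--Vilkovisky computation for the Hochschild complex of a symmetric Frobenius algebra, and the same bookkeeping of cyclic permutations should transfer to the general cyclic-operadic setting.
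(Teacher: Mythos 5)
The paper itself gives no proof of Theorem~\ref{Theoreme BV algebre}: it is imported from~\cite{MenichiL:BValgaccoHa}, so the benchmark is the proof there, which follows the same two-step outline you adopt. Your part~a) is essentially right: the coface relations are exactly the instances of~(\ref{deux cyclic}) and~(\ref{trois cyclic}) (plus $\tau_2\mu=\mu$) that you list. For the exceptional codegeneracy relation $\tau_{n-1}\sigma_0=\sigma_{n-1}\tau_n^2$ you do not need any ad hoc convention, and the unit axioms for $\mu$ are not what is used ($\tau_0=id$ is already forced by $\tau_0^{1}=id$): the relation is derivable from the stated axioms alone. Indeed, iterating~(\ref{trois cyclic}) with $g=e$ gives $\tau_{n-1}^{\,n-1}(\tau_n^{2}f\circ_n e)=\tau_n^{\,n+1}f\circ_1 e=f\circ_1 e$, and applying $\tau_{n-1}$ once more and using $\tau_{n-1}^{\,n}=id$ yields $\tau_{n-1}(f\circ_1 e)=\tau_n^{2}f\circ_n e$, as required.

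The genuine gap is in part~b), which is where the substance of the theorem lies. Once a) is known, Connes' operator $B$ exists and squares to zero on cohomology for purely formal reasons, but the identity~(\ref{relation BV}) does \emph{not} follow formally from ``cocyclic module whose cohomology is a Gerstenhaber algebra'': what makes it true is an explicit cochain homotopy, written solely in terms of the $\circ_i$, the $\tau_n$, $\mu$ and $e$ (on normalized cochains), between $f\overline{\circ}g$ and the deviation of $B$ from being a derivation of $\cup$, and constructing that homotopy with its signs is the bulk of the proof in~\cite{MenichiL:BValgaccoHa}. Your proposal stops at ``the same bookkeeping should transfer,'' which is an expectation, not an argument. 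Worse, the analogy you invoke is logically backwards within this circle of ideas: in the present paper the symmetric Frobenius case (Corollary~\ref{Cohomologie Hochschild algebre symetrique BV}) is \emph{deduced} from Theorem~\ref{Theoreme BV algebre} by exhibiting $\mathcal{C}_*(A,A)^\vee$ as a cyclic operad with multiplication. So to complete your proof you must either write down the operadic homotopy directly, or show that the known Frobenius-algebra homotopy (Tradler's, say) is an instance of a formula using only the cyclic-operad axioms; either way this is a nontrivial computation that your proposal leaves entirely to the reader.
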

\begin{theorem}\label{crochet sur cohomologie cyclique lambda}
Let $\mathcal{O}$ be a linear cyclic operad with multiplication.
Consider the associated cocyclic module.
Then the cyclic cochains $\mathcal{C}_\lambda^*(\mathcal{O})$ forms 
a subcomplex of $\mathcal{C}^*(\mathcal{O})$, stable under the Lie bracket od degre $-1$.
In particular, the cyclic cohomology $HC^*_\lambda(\mathcal{C}^*(\mathcal{O}))$
has naturally a graded Lie algebra structure of degre $-1$.
\end{theorem}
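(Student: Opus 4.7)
The plan is to reduce the theorem to two stability assertions: first, that $\mathcal{C}_\lambda^*(\mathcal{O})$ is closed under the Hochschild differential $d=\sum_{i=0}^{n+1}(-1)^i\delta_i$; second, that it is closed under the pre-Lie operation $\bar{\circ}$, hence under $\{-,-\}$. Both are direct consequences of the cocyclic module structure on $\mathcal{O}$ (Theorem~\ref{Theoreme BV algebre}(a)) combined with the cyclic operad axioms~(\ref{deux cyclic}) and~(\ref{trois cyclic}). Once established, the graded Lie structure of degree $-1$ on $\mathcal{C}^*(\mathcal{O})$ produced in the sketch of Theorem~\ref{Theoreme Gerstenhaber} restricts to $\mathcal{C}_\lambda^*(\mathcal{O})$ and descends to the cohomology $HC^*_\lambda$, automatically carrying its antisymmetry and Jacobi identity.

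For the first stability, I would invoke the cocyclic identities between $\tau$ and the coface maps $\delta_i$: by a classical reindexing, these imply that $d$ anti-intertwines with the cyclic operator, which forces $d$ to preserve the $\lambda$-fixed cochains. This is the standard verification that the cyclic cochains form a subcomplex of any cocyclic complex and uses no operadic input beyond Theorem~\ref{Theoreme BV algebre}(a).

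The main computation is the second stability. Given $f\in\mathcal{O}(m)$ and $g\in\mathcal{O}(n)$ with $\tau_m f=(-1)^m f$ and $\tau_n g=(-1)^n g$, I would apply $\tau_{m+n-1}$ to
$$f\,\bar{\circ}\, g=(-1)^{(m-1)(n-1)}\sum_{i=1}^m (-1)^{(n-1)(i-1)} f\circ_i g.$$
Axiom~(\ref{deux cyclic}) rewrites the $i=1$ summand as a term proportional to $g\circ_n f$, which is the top summand of $g\,\bar{\circ}\, f$; axiom~(\ref{trois cyclic}) rewrites each $i\geq 2$ summand as $\tau_m f\circ_{i-1}g=(-1)^m f\circ_{i-1}g$, producing a reindexed copy of $f\,\bar{\circ}\, g$ with its $i=m$ summand missing. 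Performing the symmetric calculation on $g\,\bar{\circ}\, f$ and forming the antisymmetric combination $\{f,g\}=f\,\bar{\circ}\, g-(-1)^{(m-1)(n-1)}g\,\bar{\circ}\, f$, the four ``edge'' boundary terms pair up and cancel, leaving $\tau_{m+n-1}\{f,g\}=(-1)^{m+n-1}\{f,g\}$, as required.

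The sole obstacle is the sign bookkeeping in this last step: one must verify that the cyclicity signs $(-1)^m,(-1)^n$, the symmetrization signs $(-1)^{(m-1)(n-1)}$ and $(-1)^{(n-1)(i-1)}$, and the index shift $i\mapsto i-1$ combine so that the $i=1$ and $i=m$ boundary contributions from $f\,\bar{\circ}\, g$ and $g\,\bar{\circ}\, f$ cancel correctly after antisymmetrization. Note also Remark~\ref{crochet en degre un}: in the operad degree $1$ case the formula specializes to the commutator bracket on $\mathcal{O}(1)$ restricted to the $\tau_1$-fixed subspace, providing a useful consistency check.
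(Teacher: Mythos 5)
Your central computation is exactly the paper's: apply $\tau_{m+n-1}$ to $f\,\overline{\circ}\,g$, use~(\ref{deux cyclic}) on the $i=1$ term to produce $\tau_n g\circ_n\tau_m f=(-1)^{m+n}g\circ_n f$ and~(\ref{trois cyclic}) with the shift $i\mapsto i-1$ on the remaining terms, then antisymmetrize to get $\tau_{m+n-1}\{f,g\}=(-1)^{m+n-1}\{f,g\}$; the sign bookkeeping you flag as the ``sole obstacle'' is precisely the two-line verification carried out in the paper, so that part is sound. One correction, though: your opening reduction claims $\mathcal{C}^*_\lambda(\mathcal{O})$ is closed under $\overline{\circ}$ itself, ``hence'' under $\{-,-\}$. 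That is false in general, and your own third paragraph shows why: $\tau_{m+n-1}(f\,\overline{\circ}\,g)$ contains the term $g\circ_n f$, which lives in $g\,\overline{\circ}\,f$, not in $f\,\overline{\circ}\,g$; only the antisymmetrized bracket is preserved. Since the detailed computation you then perform is at the level of $\{f,g\}$, the proof goes through, but the stated logic should be repaired. The other genuine difference is the subcomplex step: you invoke the standard fact that the cyclic cochains of any cocyclic module form a subcomplex of its Hochschild complex (valid, given Theorem~\ref{Theoreme BV algebre}(a)), whereas the paper gets it for free from the bracket computation, observing that $\tau_2\mu=\mu$ means $\mu\in\mathcal{C}^2_\lambda(\mathcal{O})$ and $d(g)=\{\mu,g\}$, so stability under the bracket already yields stability under $d$. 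The paper's route is shorter and avoids appealing to the cocyclic structure for this step; yours is equally correct but uses a slightly heavier external input.
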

\begin{proof}
Let $\mathcal{O}$ be a linear cyclic operad.
Let $f\in \mathcal{C}^m_\lambda(\mathcal{O})$ and $g\in \mathcal{C}^n_\lambda(\mathcal{O})$.
Using~(\ref{deux cyclic}), (\ref{trois cyclic}) and the change of variable $i'=i-1$ for the
first equation and using $\tau_m(f)=(-1)^m f$ and $\tau_m(g)=(-1)^n g$ for the second equation,
we have
\begin{multline*}
\tau_{m+n-1}(f\overline{\circ}g )=(-1)^{(m-1)(n-1)}
\left(\tau_n g\circ_n\tau_m f+\sum_{i'=1}^{m-1}(-1)^{(n-1)i'}\tau_mf\circ_{i'} g\right)\\
=(-1)^{(m-1)(n-1)}\left((-1)^{m+n}g\circ_n f
+(-1)^{m+n-1}\sum_{i=1}^{m-1}(-1)^{(n-1)(i-1)}f\circ_{i} g)
\right).
\end{multline*}
By symmetry
$$
(-1)^{(m-1)(n-1)}\tau_{m+n-1}(g\overline{\circ}f )
=(-1)^{m+n}f\circ_m g
+(-1)^{m+n-1}\sum_{i=1}^{n-1}(-1)^{(m-1)(i-1)}g\circ_{i} f.
$$
Therefore $\tau_{m+n-1}\{f,g\}=(-1)^{m+n-1}\{f,g\}$.

Suppose now that $\mathcal{O}$ has an associative multiplication $\mu$
such that $\tau_2\mu=\mu$.
Since $\mu\in \mathcal{C}^2_\lambda(\mathcal{O})$, we have just proved above that
for any $g\in \mathcal{C}^n_\lambda(\mathcal{O})$, the differential of $g$, $d(g)=\{\mu,g\}\in\mathcal{C}_\lambda (\mathcal{O})$. 
\end{proof}
\begin{remark}
Let $\mathcal {O}$ be a cyclic operad. Then $\tau_1:\mathcal {O}(1)\rightarrow
\mathcal {O}(1)$ is an involutive morphism of anti-algebras.
And $\mathcal{C}^1_\lambda(\mathcal{O})=\text{Ker} (\tau_1+Id:\mathcal {O}(1)\rightarrow
\mathcal {O}(1))$ is a sub Lie algebra of the Lie algebra associated
to the associative algebra $\mathcal {O}(1)$(Compare with Remark~\ref{crochet en degre un}).
\end{remark}
\begin{remark}\label{crochet sur cohomologie cyclique negative}
In~\cite[Corollary 1.5]{MenichiL:BValgaccoHa},
motivated by applications to string topology
~\cite{Chas-Sullivan:stringtop},
we proved that the negative cyclic cohomology of a cyclic operad $\mathcal{O}$
with multiplication, $HC^*_-(\mathcal{C}^{*}(\mathcal{O}))$, has a Lie bracket
of degre $-2$.
\end{remark}
\begin{remark}\label{crochet sur cohomologie cyclique ordinaire}
The (ordinary) cyclic cohomology of $\mathcal{O}$, $HC^*(\mathcal{C}^{*}(\mathcal{O}))$,
has also a Lie bracket of degre $-1$.
This was stated only in the case of the cyclic cohomology of the group ring
${\Bbbk}[G]$ of a finite group $G$~\cite[Theorem 67 a)]{Chataur-Menichi:stringclass}.
But the proof of~\cite[Theorem 67 a)]{Chataur-Menichi:stringclass}
works for any cyclic operad with multiplication.
\end{remark}
\begin{remark}
The proof of Theorem~\ref{crochet sur cohomologie cyclique lambda}
is a lot more simple than the proofs of remarks~\ref{crochet sur cohomologie cyclique negative} and~\ref{crochet sur cohomologie cyclique ordinaire}.
Indeed, the proofs of remarks~\ref{crochet sur cohomologie cyclique negative} and~\ref{crochet sur cohomologie cyclique ordinaire} use that
$H^*(\mathcal{C}^{*}(\mathcal{O}))$ is a Batalin-Vilkovisky algebra (Theorem~\ref{Theoreme BV algebre}).
On the contrary, in the proof of Theorem~\ref{crochet sur cohomologie cyclique lambda},
we don't even use that $H^*(\mathcal{C}^{*}(\mathcal{O}))$ is a Gerstenhaber algebra:
we use only the Lie algebra on $\mathcal{C}^{*}(\mathcal{O})$.
\end{remark}
If our ground ring ${\Bbbk}$ contains $\mathbb{Q}$,
there is a natural isomorphim~\cite[p. 72]{LodayJ.:cych}
$$
HC_\lambda^n(\mathcal{C}^{*}(\mathcal{O}))\buildrel{\cong}\over\rightarrow
HC^n(\mathcal{C}^{*}(\mathcal{O})).
$$
This isomorphism obviously should be compatible with the brackets.

Recall the following well-known result in string topology.
\begin{corollary}(\cite{Tradler:bvalgcohiiip},\cite[Theorem 1.6]{MenichiL:BValgaccoHa})\label{Cohomologie Hochschild algebre symetrique BV}
Let $A$ be a symmetric Frobenius algebra (Definition~\ref{def algebre de Frobenius}).
Then its Hochschild cohomology $HH^*(A,A)$ is a Batalin-Vilkovisky algebra.
\end{corollary}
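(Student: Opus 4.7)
The plan is to realize this as a direct consequence of Theorem~\ref{Theoreme BV algebre} by exhibiting the endomorphism operad $\mathcal{E}nd(A)$ of a symmetric Frobenius algebra as a cyclic operad with multiplication. Recall that $A$ symmetric Frobenius means there is a non-degenerate bilinear form $\langle -,-\rangle: A\otimes A\to {\Bbbk}$ which is associative ($\langle ab,c\rangle =\langle a,bc\rangle$) and symmetric ($\langle a,b\rangle=\langle b,a\rangle$). Since $\mathcal{E}nd(A)$ is already an operad with multiplication $\mu\in \text{Hom}(A^{\otimes 2},A)$ and unit $e=\eta\in \text{Hom}({\Bbbk},A)$ (this was Example~\ref{operade endomorphisme}), the whole game is to equip it with a compatible cyclic structure.

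The key construction is the following: non-degeneracy of the Frobenius form gives a natural isomorphism $\text{Hom}(A^{\otimes n},A)\cong \text{Hom}(A^{\otimes n+1},{\Bbbk})$ sending $f:A^{\otimes n}\to A$ to the $(n{+}1)$-form $\widetilde{f}(a_1,\dots,a_{n+1}):=\langle f(a_1,\dots,a_n),a_{n+1}\rangle$. Transport the cyclic permutation of $\{1,\dots,n{+}1\}$ through this isomorphism: define $\tau_n(f):A^{\otimes n}\to A$ by the formula
\begin{equation*}
\langle \tau_n(f)(a_1,\dots,a_n),a_{n+1}\rangle \;=\; \langle f(a_{n+1},a_1,\dots,a_{n-1}),a_n\rangle.
\end{equation*}
I would then verify the three axioms of a cyclic operad with multiplication in turn. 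First, $\tau_n^{n+1}=\mathrm{id}$ is immediate since it corresponds to the full cyclic rotation of $A^{\otimes n+1}$. Second, $\tau_2(\mu)=\mu$ follows from the symmetry and associativity of the Frobenius form: $\langle \tau_2(\mu)(a,b),c\rangle=\langle ca,b\rangle=\langle c,ab\rangle=\langle ab,c\rangle=\langle \mu(a,b),c\rangle$.

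The main work, and the only real obstacle, is checking the compatibility of $\tau$ with the composition operations $\circ_i$, i.e., the two identities (\ref{deux cyclic}) and (\ref{trois cyclic}). For $2\leq i\leq m$, the relation $\tau_{m+n-1}(f\circ_i g)=\tau_m f\circ_{i-1}g$ is essentially a tautology, because the inner slot $g$ is not touched by the cyclic rotation; one just relabels the variables. The delicate case is (\ref{deux cyclic}), $\tau_{m+n-1}(f\circ_1 g)=\tau_n g\circ_n\tau_m f$: here the rotation transfers the last argument of $f\circ_1 g$ (which belongs to the $f$-block) to the first position (which belongs to the $g$-block), so one must rewrite the composition as $g$ evaluated with an argument coming from $f$. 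This is exactly the content of the calculation performed in \cite{Tradler:bvalgcohiiip} and \cite{MenichiL:BValgaccoHa}; it boils down to the invariance of the pairing combined with non-degeneracy, and unwinds to the identity
\begin{equation*}
\langle \tau_{m+n-1}(f\circ_1 g)(a_1,\dots,a_{m+n-1}),a_{m+n}\rangle
= \langle (\tau_n g\circ_n\tau_m f)(a_1,\dots,a_{m+n-1}),a_{m+n}\rangle,
\end{equation*}
established by tracking the cyclic shift through the formula $\widetilde{f\circ_1 g}(a_1,\dots,a_{m+n})=\langle f(g(a_1,\dots,a_n),a_{n+1},\dots,a_{m+n-1}),a_{m+n}\rangle$.

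Once $(\mathcal{E}nd(A),\tau,\mu,e)$ is verified to be a cyclic operad with multiplication, Theorem~\ref{Theoreme BV algebre}(b) applies verbatim: the Gerstenhaber algebra $H^*(\mathcal{C}^*(\mathcal{E}nd(A)))=HH^*(A,A)$ acquires a compatible Connes coboundary $B$ turning it into a Batalin-Vilkovisky algebra. This finishes the corollary.
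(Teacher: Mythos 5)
Your proposal is correct and is essentially the paper's own argument: defining $\tau_n$ on $\mathcal{E}nd(A)$ via the non-degenerate symmetric invariant pairing is exactly what the paper does by transporting the standard cyclic structure on $\mathcal{C}_*(A,A)^\vee$ through the isomorphisms $\mathcal{C}^*(A,\Theta)$ and $Ad$. Like the paper, you verify the easy axioms ($\tau_n^{n+1}=\mathrm{id}$, $\tau_2\mu=\mu$) and defer the compatibility with the $\circ_i$ to \cite{Tradler:bvalgcohiiip} and \cite{MenichiL:BValgaccoHa}, before invoking Theorem~\ref{Theoreme BV algebre}.
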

We need to sketch our proof given in~\cite{MenichiL:BValgaccoHa}.
\begin{proof}
Let $\Theta:A\buildrel{\cong}\over\rightarrow A^\vee$ be an isomorphism
of $A$-bimodules given by the symmetric Frobenius algebra structure on $A$.
Then $\mathcal{C}^*(A,\Theta):\mathcal{C}^*(A,A)\buildrel{\cong}\over\rightarrow\mathcal{C}^*(A,A^\vee)$ is an isomorphism of cosimplicial
modules.
Let $Ad:\mathcal{C}^*(A,A^\vee)\buildrel{\cong}\over\rightarrow
\mathcal{C}_*(A,A)^\vee$ be the adjunction map~\cite[(4.1)]{MenichiL:BValgaccoHa}
which associates to any $g\in\text{Hom}(A^n,A^\vee)$, the linear map
$Ad(g):A\otimes A^{\otimes n}\rightarrow {\Bbbk}$ given by
$Ad(g)(a_0\otimes\dots\otimes a_n)=g(a_1,\dots,a_n)(a_0)$.
Then $Ad:\mathcal{C}^*(A,A^\vee)\buildrel{\cong}\over\rightarrow
\mathcal{C}_*(A,A)^\vee$ is an isomorphism of cosimplicial modules.
By~\cite[Proof of Theorem 1.6]{MenichiL:BValgaccoHa}
$$\mathcal{C}^*(A,A)\build\rightarrow_\cong^{\mathcal{C}^*(A,\Theta)}
\mathcal{C}^*(A,A^\vee)\build\rightarrow_\cong^{Ad}
\mathcal{C}_*(A,A)^\vee$$
equipped with the $\tau_n$~\cite[(4.2)]{MenichiL:BValgaccoHa}
is a cyclic operad with multiplication.
Using Theorem~\ref{Theoreme BV algebre},
$HH^*(A,A)\buildrel{\cong}\over\rightarrow HH^*(A,A^\vee)$ is a Batalin-Vilkovisky algebra.
\end{proof}
If instead of using Theorem~\ref{Theoreme BV algebre}, we apply 
Theorem~\ref{crochet sur cohomologie cyclique lambda} in the previous proof,
we obtain the following Corollary:
\begin{corollary}\label{crochet sur la Cohomologie cyclic rationelle}
Let $A$ be a symmetric Frobenius algebra.
Then its cyclic cohomology $HC^*_\lambda(A)$ (in the sense of~\cite[2.4.2]{LodayJ.:cych}), is a
graded Lie algebra of degree $-1$.
\end{corollary}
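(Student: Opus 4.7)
The plan is to re-run the proof of Corollary~\ref{Cohomologie Hochschild algebre symetrique BV} essentially verbatim, but substituting Theorem~\ref{crochet sur cohomologie cyclique lambda} for Theorem~\ref{Theoreme BV algebre} at the last step. Concretely, from the symmetric Frobenius structure on $A$ I would take the $A$-bimodule isomorphism $\lineaire:A\xrightarrow{\cong}A^\vee$, form the composite
\[
\mathcal{C}^*(A,A)\xrightarrow[\cong]{\mathcal{C}^*(A,\lineaire)}
\mathcal{C}^*(A,A^\vee)\xrightarrow[\cong]{Ad}
\mathcal{C}_*(A,A)^\vee,
\]
and endow it with the cyclic operators $\tau_n$ used in the proof of Corollary~\ref{Cohomologie Hochschild algebre symetrique BV}. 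As recalled there (quoting \cite{MenichiL:BValgaccoHa}), this exhibits $\mathcal{C}^*(A,A)$ as the underlying cosimplicial module of a cyclic operad with multiplication~$\mathcal{O}$.

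Having this cyclic operad with multiplication in hand, I would then apply Theorem~\ref{crochet sur cohomologie cyclique lambda} directly: the cyclic cochain subcomplex $\mathcal{C}^*_\lambda(\mathcal{O})$ is stable under the degree $-1$ Lie bracket coming from the operadic $\overline{\circ}$ operation, and hence its cohomology $HC^*_\lambda(\mathcal{C}^*(\mathcal{O}))$ inherits a graded Lie algebra structure of degree~$-1$.

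The remaining point, and the only one that is not immediate from quoting the earlier theorem, is to identify $HC^*_\lambda(\mathcal{C}^*(\mathcal{O}))$ with $HC^*_\lambda(A)$ in the sense of~\cite[2.4.2]{LodayJ.:cych}. Since the two horizontal arrows $\mathcal{C}^*(A,\lineaire)$ and $Ad$ are isomorphisms of cosimplicial modules (indeed of cocyclic modules, for the $\tau_n$ transported from $\mathcal{C}_*(A,A)^\vee$), the associated cyclic cochain subcomplex $\mathcal{C}^*_\lambda(\mathcal{O})$ is isomorphic to the cyclic cochain subcomplex of $\mathcal{C}_*(A,A)^\vee$, whose cohomology is by definition Loday's $HC^*_\lambda(A)$. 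This compatibility is the only step requiring verification, and it is purely a matter of unwinding the definitions of $\lineaire$, $Ad$ and the $\tau_n$ already displayed in the proof of Corollary~\ref{Cohomologie Hochschild algebre symetrique BV}; I do not expect any genuine obstacle.
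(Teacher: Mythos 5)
Your proposal is correct and is essentially the paper's own argument: the paper obtains this corollary precisely by re-running the proof of Corollary~\ref{Cohomologie Hochschild algebre symetrique BV} (the cyclic operad with multiplication $\mathcal{C}^*(A,A)\cong\mathcal{C}_*(A,A)^\vee$ built from $\lineaire$ and $Ad$) and invoking Theorem~\ref{crochet sur cohomologie cyclique lambda} in place of Theorem~\ref{Theoreme BV algebre}. Your final identification of the cyclic cochains of this cocyclic module with Loday's complex $C^*_\lambda(A)$ is exactly the definitional unwinding the paper leaves implicit, and it goes through without obstacle.
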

We wonder if this Corollary is not a
particular simple case of~\cite[Proposition 2.11]{Hamilton-Lazarev:symplecticstring}?

In~\cite{MenichiL:BValgaccoHa}, our main objective was the following result:
\begin{corollary}\cite[Theorem 1.1]{MenichiL:BValgaccoHa}\label{Cobar algebre de Hopf BV}
Let $H$ be a Hopf algebra endowed with a modular pair in
involution $(\Character,1)$  (Definition~\ref{definition modular pair in involution}). Then the Connes-Moscovici cocyclic on the Cobar construction
on $H$,
defines a Batalin-Vilkovisky
algebra structure on $\text{Cotor}^{*}_{H}(\Bbbk,\Bbbk)$.
\end{corollary}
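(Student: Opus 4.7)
The plan is to recognise Connes-Moscovici's cocyclic structure on $\Omega H$ as the one coming from a cyclic operad with multiplication, and then invoke Theorem~\ref{Theoreme BV algebre} to obtain the Batalin-Vilkovisky structure for free. From the proof of Theorem~\ref{cotor sous algebre de Gerstenhaber de Hochschild}, the isomorphism $ev$ already identifies the cosimplicial module $\Omega H$ with the cosimplicial module of the operad with multiplication $\mathcal{O} = \mathcal{C}o\mathcal{E}nd_{H\text{-mod}}(H)$, whose operadic composition is described by formula~(\ref{Composition operad cobar}). All that remains is to upgrade $\mathcal{O}$ to a cyclic operad by defining cyclic operators $\tau_n$ on $\mathcal{O}(n) \cong H^{\otimes n}$, and to verify that they satisfy the axioms of Definition~\ref{operade cyclique avec multiplication}.

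Concretely, I would transport along $ev$ the Connes-Moscovici cyclic operator attached to the modular pair $(\Character,1)$, namely
$$
\tau_n(h_1 \otimes \cdots \otimes h_n) \;=\; (h_2 \otimes \cdots \otimes h_n \otimes 1) \cdot \Delta^{n-1}\bigl(\widetilde{S}(h_1)\bigr),
$$
where $\widetilde{S} = (\Character \otimes S)\circ\Delta$ is the $\Character$-twisted antipode and the dot denotes componentwise right multiplication. Three checks are required: (a) $\tau_n^{n+1} = \mathrm{id}$, which after coassociativity and the antipode identity $\mu\circ(S\otimes\mathrm{id})\circ\Delta = \eta\varepsilon$ reduces exactly to the involution condition $\widetilde{S}^2 = \mathrm{id}$ defining the modular pair in involution; (b) the compatibility formulas (\ref{deux cyclic}) and (\ref{trois cyclic}) with respect to $\circ_i$; (c) $\tau_2\mu = \mu$ for $\mu = 1\otimes 1 \in \mathcal{O}(2)$, which is immediate from $\widetilde{S}(1)=1$ and $\Delta(1)=1\otimes 1$. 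Granting (a)--(c), Theorem~\ref{Theoreme BV algebre}(b) then endows $H^*(\mathcal{C}^*(\mathcal{O})) \cong \text{Cotor}^*_H(\Bbbk,\Bbbk)$ with a Batalin-Vilkovisky structure whose operator $B$ is the Connes coboundary of this cocyclic module.

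The main obstacle is the verification of identity (\ref{deux cyclic}), i.e.\ $\tau_{m+n-1}(f\circ_1 g) = \tau_n g\circ_n \tau_m f$. Under $ev$, the composition $\circ_1$ of~(\ref{Composition operad cobar}) inserts an iterated coproduct of the first factor of $f$ and right-multiplies it componentwise against $g$, while $\tau$ rotates entries through $\widetilde{S}$ and redistributes an iterated coproduct on the right. Reconciling these two operations requires combinatorial bookkeeping using coassociativity, the character property $\Character(ab)=\Character(a)\Character(b)$, the anti-coalgebra nature of $S$, and the antipode axiom; at bottom it amounts to repackaging in operadic language the proof that the Connes-Moscovici formulas make $\Omega H$ a cocyclic module. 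The axiom (\ref{trois cyclic}) is comparatively routine, as it involves only an insertion away from the slot where $\widetilde S$ acts.
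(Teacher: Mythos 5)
Your strategy is exactly the paper's: identify $\Omega H$ with the operad with multiplication $\mathcal{C}o\mathcal{E}nd_{H-mod}(H)$ via $ev$ (proof of Theorem~\ref{cotor sous algebre de Gerstenhaber de Hochschild}), equip it with the Connes--Moscovici operators $\tau_n$, check that this yields a cyclic operad with multiplication, and then apply Theorem~\ref{Theoreme BV algebre}; the paper likewise does not redo the cyclicity verification here but cites the computation of \cite[Section 5]{MenichiL:BValgaccoHa}, so leaving your checks (a)--(c) as ``combinatorial bookkeeping'' is at the same level of detail as the paper's own proof. One concrete correction, though: your displayed formula for $\tau_n$ multiplies the components of $\Delta^{n-1}\bigl(\widetilde{S}(h_1)\bigr)$ on the \emph{right} of $h_2\otimes\cdots\otimes h_n\otimes 1$, whereas the Connes--Moscovici operator (recalled in the proof of Proposition~\ref{dualite Connes-Moscovici Khalkhali-Rangipour}) multiplies them on the \emph{left}: $\tau_n(h_1\otimes\cdots\otimes h_n)=\mu_{H^{\otimes n}}\bigl(\Delta^{n-1}\widetilde{S}(h_1)\otimes(h_2\otimes\cdots\otimes h_n\otimes 1)\bigr)$. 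With your right-handed version the cocyclic identities already fail in low degree: $\tau_2\circ\delta_1=\delta_0\circ\tau_1$ would require $h^{(2)}S(h^{(1)})=\varepsilon(h)1$, which is false for a general noncommutative, noncocommutative Hopf algebra, while with the left-handed version it reduces to the antipode axiom $S(h^{(1)})h^{(2)}=\varepsilon(h)1$. With that order fixed, your outline coincides with the paper's argument.
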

\begin{proof}
A computation~\cite[Section 5]{MenichiL:BValgaccoHa} shows
that the operad with multiplication $\mathcal{C}o\mathcal{E}nd_{H-mod}(H)\cong \Omega H$
considered in the proof of Theorem~\ref{cotor sous algebre de Gerstenhaber de Hochschild}
equipped with the $\tau_n$ defined by Connes and Moscovici, is cyclic.
Therefore, by Theorem~\ref{Theoreme BV algebre}, its homology
$\text{Cotor}^{*}_{H}(\Bbbk,\Bbbk)$ is a Batalin-Vilkovisky algebra.
\end{proof}
If instead of using Theorem~\ref{Theoreme BV algebre}, we apply 
Theorem~\ref{crochet sur cohomologie cyclique lambda} in the previous proof,
we obtain the following Corollary:
\begin{corollary}\label{crochet sur Hopf cyclic cohomologie}
Let $H$ be a Hopf algebra endowed with a modular pair in
involution $(\Character,1)$ (Definition~\ref{definition modular pair in involution}).
Then its cyclic cohomology,
$HC^*_{(\Character,1)}(H)$, is a graded Lie algebra of degree $-1$.
\end{corollary}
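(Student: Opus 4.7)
The plan is to mimic the proof of Corollary~\ref{Cobar algebre de Hopf BV}, but feed the cyclic operad with multiplication into Theorem~\ref{crochet sur cohomologie cyclique lambda} instead of into Theorem~\ref{Theoreme BV algebre}.

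More precisely, I would first recall the ingredient already in hand. The proof of Theorem~\ref{cotor sous algebre de Gerstenhaber de Hochschild} identifies the cobar construction $\Omega H$ with the cosimplicial module associated to the operad with multiplication $\mathcal{C}o\mathcal{E}nd_{H\text{-mod}}(H)$. The computation in~\cite[Section 5]{MenichiL:BValgaccoHa}, invoked in Corollary~\ref{Cobar algebre de Hopf BV}, shows that when $H$ carries a modular pair in involution $(\Character,1)$, the Connes--Moscovici operators $\tau_n$ endow this operad with the structure of a cyclic operad with multiplication (they satisfy the cyclic axioms and fix the multiplication $\Delta$). By definition, Connes--Moscovici's cyclic cohomology $HC^*_{(\Character,1)}(H)$ is the cyclic cohomology of this cocyclic module, that is, $HC^*_\lambda$ of the cochain complex $\mathcal{C}^*(\mathcal{C}o\mathcal{E}nd_{H\text{-mod}}(H))$.

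Given this setup, the statement is an immediate application of Theorem~\ref{crochet sur cohomologie cyclique lambda}: since $\mathcal{C}o\mathcal{E}nd_{H\text{-mod}}(H)$ is a cyclic operad with multiplication, the subspace $\mathcal{C}^*_\lambda$ of cyclic cochains is stable under the Gerstenhaber bracket $\{-,-\}$ of degree $-1$ and under the differential $d = \{\mu,-\}$, so passing to cohomology produces a graded Lie algebra of degree $-1$ on $HC^*_\lambda \cong HC^*_{(\Character,1)}(H)$.

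Consequently the only thing to verify in the writeup is the bookkeeping identification between the cocyclic module defined by Connes and Moscovici from $(\Character,1)$ and the cocyclic module carried by $\mathcal{C}o\mathcal{E}nd_{H\text{-mod}}(H)$ with the $\tau_n$ of~\cite[Section 5]{MenichiL:BValgaccoHa}; but this identification is exactly the content of that computation and is already invoked in the proof of Corollary~\ref{Cobar algebre de Hopf BV}. The main (mild) obstacle is simply to make clear that Theorem~\ref{crochet sur cohomologie cyclique lambda} applies verbatim, with no extra hypothesis on $H$ beyond the modular pair in involution that was used to cyclify the operad; no Batalin--Vilkovisky structure, and in particular no Gerstenhaber algebra structure on $HC^*_\lambda$ itself, is needed, as emphasized in the remark following Theorem~\ref{crochet sur cohomologie cyclique lambda}.
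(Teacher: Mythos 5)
Your proposal is correct and is exactly the paper's argument: the paper obtains this corollary by rerunning the proof of Corollary~\ref{Cobar algebre de Hopf BV} (where the Connes--Moscovici operators $\tau_n$ make $\mathcal{C}o\mathcal{E}nd_{H\text{-mod}}(H)\cong\Omega H$ a cyclic operad with multiplication) and simply applying Theorem~\ref{crochet sur cohomologie cyclique lambda} in place of Theorem~\ref{Theoreme BV algebre}. Nothing is missing.
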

\section{Batalin-Vilkovisky algebra structure on $\text{Ext}^{*}_{H}(\Bbbk,\Bbbk)$}\label{section ext BV-algebre}
Everybody is more familiar with an algebra $A$ than with a coalgebra $C$.
And therefore, one usually prefers the Exterior product $\text{Ext}^{*}_{A}(\Bbbk,\Bbbk)$
instead of the Cotorsion product  $\text{Cotor}^{*}_{C}(\Bbbk,\Bbbk)$.
The goal of this section is to give the duals of Corollaries~\ref{Cobar algebre de Hopf BV}
and~\ref{crochet sur Hopf cyclic cohomologie}, Theorem~\ref{ext BV algebre} below.
Taillefer~\cite{Taillefer:cyclicHopf}, Khalkhali and Rangipour~\cite{Khalkhali-Rangipour:newcyclic}
developped a theory dual to Connes-Moscovici cyclic cohomology of Hopf algebras.
First, we are going to explain this duality.
\begin{proposition}\label{dualite Connes-Moscovici Khalkhali-Rangipour}
Let $K$ be a finite dimensional Hopf algebra with a modular pair in involution
$(\Character, \sigma )$ in the sense
of Khalkhali-Rangipour~\cite[(1)]{Khalkhali-Rangipour:newcyclic}.
Then i) its dual $K^\vee$ is a Hopf algebra equipped with a modular pair in involution
$(ev_\sigma, \Character )$
in the sense
of Connes-Moscovici where $ev_\sigma:K^\vee\rightarrow \mathbb{F}$ is defined by
$ev_\sigma(\varphi)=\varphi(\sigma)$.

Let $\psi_n:(K^\vee)^{\otimes n}\buildrel{\cong}\over\rightarrow (K^{\otimes n})^{\vee}$
be the linear map mapping the tensor product $\varphi_1\otimes\dots\otimes \varphi_n$
of $n$ forms on $K$ to the form on $K^{\otimes n}$, also denoted
$\varphi_1\otimes\dots\otimes \varphi_n$, mapping $k_1\otimes\dots\otimes k_n$
to the product $\varphi_1(k_1)\dots \varphi_n(k_n)$.
Then ii) $\psi_*$ is an isomorphism of cocyclic modules between the cocyclic
modules $\Omega (K^\vee)_{(ev_\sigma, \Character)}$ introduced by Connes-Moscovici
and the dual of the cyclic module $B(K)^{(\Character,\sigma)}$
introduced by Khalkhali-Rangipour~\cite[Theorem 2.1]{Khalkhali-Rangipour:newcyclic}
and Taillefer~\cite{Taillefer:cyclicHopf}.

iii) In particular, $\psi_*$ induces an isomorphism between
Connes-Moscovici cyclic cohomology of $K^\vee$, $HC^*_{(ev_\sigma,\Character)}(K^\vee)$
and the dual of Khalkhali-Rangipour-Taillefer cyclic homology of $K$,
$\widetilde{HC}_*^{(\Character,\sigma)}(K)^\vee$.
\end{proposition}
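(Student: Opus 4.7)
The plan is to exploit the classical fact that, in finite dimension, Hopf algebra duality exchanges characters and group-like elements, and then to run through the structure maps one by one.

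For part i), I would start by recalling that for a finite dimensional Hopf algebra $K$, the linear dual $K^\vee$ is again a Hopf algebra, with product $\mu_{K^\vee}=\Delta_K^\vee$, coproduct $\Delta_{K^\vee}=\mu_K^\vee$, unit $\eta_{K^\vee}=\varepsilon_K^\vee$, counit $\varepsilon_{K^\vee}=\eta_K^\vee$ and antipode $S_{K^\vee}=S_K^\vee$. Under this duality, the set of characters (algebra maps $K\to\mathbb{F}$) corresponds bijectively to the set of group-like elements of $K^\vee$, and conversely group-like elements of $K$ correspond to characters of $K^\vee$ via $g\mapsto ev_g$. So $(\Character,\sigma)$ gives precisely the pair $(ev_\sigma,\Character)$ for $K^\vee$. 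The involution condition in the Khalkhali-Rangipour sense, namely $(\widetilde{S}_{(\Character,\sigma)})^2=\mathrm{id}$ where $\widetilde{S}(h)=\Character(h^{(1)})S(h^{(2)})\sigma$ (or the appropriate dual twist), translates into its counterpart on $K^\vee$ by direct dualization of the defining formula; I would write out $\widetilde{S}_{(ev_\sigma,\Character)}$ on $K^\vee$ and verify it is the transpose of $\widetilde{S}_{(\Character,\sigma)}$ on $K$, so that $(\widetilde{S})^2=\mathrm{id}$ on one side if and only if it holds on the other.

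For part ii), the heart of the proof, I would write down the cofaces $\delta_i$, codegeneracies $\sigma_i$ and cocyclic operators $\tau_n$ of $\Omega(K^\vee)_{(ev_\sigma,\Character)}$ as given by Connes-Moscovici, and the faces $d_i$, degeneracies $s_i$ and cyclic operators $t_n$ of $B(K)^{(\Character,\sigma)}$ as given by Khalkhali-Rangipour-Taillefer, then check naturality of $\psi_n$ with respect to each. The coface and codegeneracy checks are routine: $\delta_0$ and $\delta_{n+1}$ involve $\Delta_{K^\vee}=\mu_K^\vee$ together with the twist by $ev_\sigma$ (resp.\ $\Character$), and these translate under $\psi$ to the faces $d_0$ and $d_{n+1}$ of $B(K)$ which involve $\mu_K$ twisted by $\sigma$ and $\Character$; the interior cofaces/degeneracies similarly dualize componentwise. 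The subtle point is the cocyclic operator $\tau_n$ on $\Omega(K^\vee)$: its defining formula mixes the antipode $S_{K^\vee}$, the iterated coproduct of $K^\vee$, the group-like $\Character$ and the character $ev_\sigma$. Under $\psi_n$ this becomes a formula involving $S_K^\vee$, the iterated product of $K$, $\Character$ and $\sigma$, which must be compared term by term with the Khalkhali-Rangipour cyclic operator $t_n$ on $B(K)$. I expect this to be the main obstacle: the calculation is a somewhat lengthy pairing manipulation, essentially the content of \cite[Theorem 2.1]{Khalkhali-Rangipour:newcyclic} read backwards. The key manipulations will be the identity $\psi_n\circ(f_1\otimes\cdots\otimes f_n)=(f_1\otimes\cdots\otimes f_n)^\vee\circ\psi_n$ together with the Hopf-algebra identities expressing how $S_{K^\vee}$ pairs against iterated multiplication in $K$.

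For part iii), once the isomorphism of cocyclic modules in ii) is established, it is formal: the normalized cyclic cochain complex functor $HC^*_\lambda$, or equivalently the Connes cyclic cohomology functor, applied to the cocyclic module $\Omega(K^\vee)_{(ev_\sigma,\Character)}$ gives $HC^*_{(ev_\sigma,\Character)}(K^\vee)$, while applied to $(B(K)^{(\Character,\sigma)})^\vee$ it gives $\widetilde{HC}_*^{(\Character,\sigma)}(K)^\vee$ (since linear duality is exact over a field and commutes with taking cyclic (co)homology in finite dimension). The isomorphism $H^*(\psi_*)$ is then the desired iso. The whole strategy is simply: identify the two cocyclic structures via the duality pairing $\psi_n$, verify compatibility with faces/degeneracies/cyclic operators, then conclude on (co)homology.
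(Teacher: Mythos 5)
Your plan is essentially the paper's own proof: finite-dimensional Hopf duality exchanges characters and group-like elements, $\psi_*$ is checked to intertwine the (twisted) cofaces, codegeneracies and cyclic operators of $\Omega(K^\vee)_{(ev_\sigma,\Character)}$ and $(B(K)^{(\Character,\sigma)})^\vee$, and iii) is the formal consequence on (co)homology over a field; the paper even treats the involution claim of i) exactly as you suggest, as the $n=1$ case $\tau_1=t_1^\vee$ of the cocyclic compatibility. For the step you rightly single out as the main obstacle, the paper shortcuts the ``lengthy pairing manipulation'' by noting that transposition $\text{Hom}(K,K)\rightarrow\text{Hom}(K^\vee,K^\vee)$ is a morphism of convolution algebras, so $(\sigma S)^\vee=((\eta_{K^\vee}\circ ev_\sigma)\star S^\vee)$ is precisely the Connes--Moscovici twisted antipode of $(ev_\sigma,\Character)$ on $K^\vee$, and then writes $t_n$ and $\tau_n$ as composites of structure maps to get $\psi_n\circ\tau_n=t_n^\vee\circ\psi_n$; when carrying this out, be careful that the Khalkhali--Rangipour operator is $t_1=\sigma\cdot\bigl((S\star(\eta\circ\Character)\bigr)$ (group-like multiplying on the left, character hitting the second leg), not $h\mapsto\bigl(((\eta\circ\Character)\star S)(h)\bigr)\sigma$ as in your sketch, which is the Connes--Moscovici convention on the dual side.
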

The cocyclic module $\Omega (H)_{(\Character,\sigma)}$ is denoted
$H_{(\Character,\sigma)}^{\natural} $ 
in~\cite[Theorem 1]{Connes-Moscovici:symmetry}.
The cyclic module $B(K)^{(\Character,\sigma)}$ is denoted $\widetilde{K}^{(\Character,\sigma)}$ in~\cite[Theorem 2.1]{Khalkhali-Rangipour:newcyclic} and
$C^{(\sigma,\varepsilon,\Character)}_*(K)$ in~\cite[2.1]{Taillefer:cyclicHopf}.
\begin{definition}\label{definition modular pair}
A {\it modular pair} is a couple $(\Character,\sigma)$ when $\Character$ is
a character and $\sigma$ is a group-like element such
that $\Character(\sigma)=1$.
\end{definition}
\begin{proof}[Proof of Proposition~\ref{dualite Connes-Moscovici Khalkhali-Rangipour}]
i) An element $\sigma$ is a {\it group-like} element of $K$ by definition if and only if
$\Delta \sigma=\sigma\otimes\sigma$ and $\varepsilon(\sigma)=1$. This means
that the linear map that we denoted again $\sigma:\mathbb{F}\rightarrow K$, mapping
$1$ to $\sigma$ is a morphism of coalgebras. Therefore its dual $ev_\sigma=\sigma^\vee:K^\vee\rightarrow \mathbb{F}$ is a morphism of algebras, i. e.
a {\it character} of $K^\vee$.
Let $\Character:K\rightarrow \mathbb{F}$ be a character of $K$, i. e.
a morphism of algebras. Its dual $\Character^\vee:\mathbb{F}\rightarrow K^\vee$,
 mapping $1$ to $\Character$, is a morphism of coalgebras, i.e $\Character$ is a group-like element of $K^\vee$.
By definition, $ev_\sigma(\Character)=\Character(\sigma)$. Therefore $(\Character,\sigma)$ is a modular pair on $K$ if and only if $(ev_\sigma,\Character)$
is a modular pair in $K^\vee$.

Let $(\Character,\sigma)$ be a modular pair on $H$.
The {\it twisted antipode} $\widetilde{S}$ associated to $(\Character,\sigma)$ (in the sense of Connes-Moscovici)  is by definition the convolution
product $(\eta\circ\Character)\star S$ in $\text{Hom}(H,H)$.
Explicitly, for $h\in H$, $\widetilde{S}(h)=\Character(h^{1})S(h^{2})$,
where $\Delta h=h^{1}\otimes h^{2}$.
Consider the map $\tau_n:H^{\otimes n}\rightarrow H^{\otimes n}$ defined by
$$
\tau_n(h_1\otimes\dots\otimes h_n):=
\mu_{H^{\otimes n}}\left(\Delta^{n-1}\widetilde{S}(h_1)\otimes
(h_2\otimes\cdots\otimes h_n\otimes \sigma)\right)
$$
Here $\mu_{H^{\otimes n}}:H^{\otimes n}\otimes H^{\otimes n}\rightarrow H^{\otimes n}$ is the product in $H^{\otimes n}$
and $\Delta^{n-1}:H\rightarrow H^{\otimes n}$ is the iterated diagonal on $H$.
In particular, $\tau_1(h)=\widetilde{S}(h)\sigma$.
\begin{definition}\label{definition modular pair in involution}
A modular pair $(\Character,\sigma)$
(Definition~\ref{definition modular pair}) is
{\it in involution} in the sense of Connes-Moscovici
if and only if $\tau_1^2=id_H$, i. e. $\forall h\in H$,
$\widetilde{S}^2(h)=\sigma h\sigma^{-1}$.
\end{definition}
Let $(\Omega H)_{(\Character,\sigma)}$ be the usual cosimplicial module
defining the Cobar construction, except that $\delta_{n+1}:H^{\otimes n}
\rightarrow H^{\otimes n+1}$ is given by~\cite[(2.9)]{Connes-Moscovici:symmetry}
 $\delta_{n+1}(h_1\otimes\dots\otimes h_n)=h_1\otimes\dots\otimes h_n\otimes\sigma$. Connes and Moscovici have shown that if $\tau_1^2=id_H$,
then  $(\Omega H)_{(\Character,\sigma)}$ (equipped with the $\tau_n$)
is a cocyclic module.

Let $(\Character,\sigma)$ be a modular pair on $K$.
Let $t_n:K^{\otimes n}\rightarrow K^{\otimes n}$ defined by
(~\cite[Theorem 2.1]{Khalkhali-Rangipour:newcyclic}
or~\cite[2.1]{Taillefer:cyclicHopf} which generalizes~\cite[(7.3.3.1)]{LodayJ.:cych})
$$t_n(k_1\otimes\dots\otimes k_n)=\sigma S(k_1^{(1)}\dots k_n^{(1)})
\otimes k_1^{(2)}\otimes\dots\otimes k_{n-1}^{(2)}\Character(k_n^{(2)})
$$
where $\Delta(k_i)=k_i^{(1)}\otimes k_i^{(2)}$.
In particular, $t_1$ is equal to $\sigma(S\star \eta\circ\Character)$,
the left multiplication by $\sigma$ of the convolution product
$\star$ of $S$ and $\eta\circ\Character$.
By definition, the couple $(\Character,\sigma)$ is a {\it modular pair
in involution} in the sense of Khalkhali-Rangipour~\cite[(1)]{Khalkhali-Rangipour:newcyclic} if and only if $t_1^2=id_K$.

Therefore to prove part i) of this Proposition, it suffices to show that
$\tau_1=t_1^\vee$. This will be proved in the proof of ii) below.

(Denote by $K^{op,cop}$ the Hopf algebra
with the opposite multiplication, the opposite diagonal and the same antipode
~\cite[Remark 4.2.10]{Dascalescu-Nastasescu-Raianu:hopfalgebras},
since the convolution product $\star$ on $\text{Hom}(K^{op,cop},K^{op,cop})$
is the opposite of the convolution product $\star$ on $\text{Hom}(K,K)$,
note that a modular pair in involution for $K$ in the sense
of Khalkhali-Rangipour is the same as a modular pair in involution
for $K^{op,cop}$ in the sense of Connes-Moscovici.)

ii) Let $B(K)^{(\Character,\sigma)}$ be the usual simplicial module
defining the Bar construction except that
$d_{n+1}:K^{\otimes n+1}\rightarrow K^{\otimes n}$
is given by (~\cite[Theorem 2.1]{Khalkhali-Rangipour:newcyclic}
or~\cite[2.1]{Taillefer:cyclicHopf})
$d_{n+1}(k_1\otimes\dots\otimes k_{n+1})=k_1\otimes\dots\otimes k_{n}\Character(k_{n+1})
$. It is well known~\cite[Lemma XVIII.7.3]{Kassel:quantumgrps} that
$\psi_*$ is an isomorphism of cosimplicial modules from the usual Cobar construction on $K^\vee$, $\Omega(K^\vee)_{(ev_\sigma,\varepsilon)}$, to the dual of the usual Bar construction on $K$,
$(B(K)^{(\varepsilon, \sigma)})^\vee$.
Obviously, $\psi_{n+1}\circ \delta_{n+1}=d_{n+1}^\vee\circ\psi_{n}$.
Therefore $\psi_*:\Omega(K^\vee)_{(ev_\sigma,\Character)}\buildrel{\cong}\over\rightarrow (B(K)^{(\Character, \sigma)})^\vee$ is an isomorphism of cosimplicial modules even if $\Character\neq \varepsilon$.

Denote by $\sigma S:K\rightarrow K$ the linear map defined by $(\sigma S)(k)=\sigma S(k)$, $k \in K$.
The transposition map $\text{Hom}(K,K)\rightarrow \text{Hom}(K^\vee,K^\vee)$,
mapping a linear map $f:K\rightarrow K$ to its dual  $f^\vee:K^\vee\rightarrow
K^\vee$ is a morphism of algebras with respect to the convolution products $\star$. Since  $\sigma S$ can be written as the convolution product
$(\sigma\circ\varepsilon) \star S$ of the composite
$K\buildrel{\varepsilon}\over\rightarrow \mathbb{F}\buildrel{\sigma}\over\rightarrow K$ and of the antipode $S$, its dual  $(\sigma S)^\vee$ is equal to
$(\varepsilon^\vee\circ\sigma^\vee) \star S^\vee=(\varepsilon \circ ev_\sigma)\star S^\vee$ which is the twisted antipode $\widetilde{S}$ on $K^\vee$ associated
to the modular pair $(ev_\sigma,\Character)$.

The cyclic operator $t_n:K^{\otimes n}\rightarrow K^{\otimes n}$
can be written as the composite
$$
K^{\otimes n}\buildrel{\Delta_{K^{\otimes n}}}\over\rightarrow K^{\otimes n}\otimes K^{\otimes n}\buildrel{\mu^{(n-1)}\otimes K^{\otimes n}}\over\rightarrow K\otimes K^{\otimes n}\buildrel{\sigma S\otimes K^{\otimes n-1}\otimes \Character}\over\rightarrow
  K\otimes K^{\otimes n-1}\otimes \mathbb{F}.
$$
Here $\mu^{(n-1)}:K^{\otimes n}\rightarrow K$ is the iterated product on $K$
and $\Delta_{K^{\otimes n}}$ is the diagonal of $K^{\otimes n}$.
The cocyclic operator $\tau_n:H^{\otimes n}\rightarrow H^{\otimes n}$
can be written as the composite
$$
H\otimes H^{\otimes n-1}\otimes \mathbb{F}
\buildrel{\widetilde{S}\otimes H^{\otimes n-1}\otimes \sigma}\over\rightarrow
H\otimes H^{\otimes n}
\buildrel{\Delta^{(n-1)}\otimes H^{\otimes n}}\over\rightarrow H^{\otimes n}\otimes H^{\otimes n}\buildrel{\mu_{H^{\otimes n}}}\over\rightarrow H^{\otimes n}\
$$
Here $\Delta^{(n-1)}:H\rightarrow H^{\otimes n}$ is the iterated diagonal on $H$
and $\mu_{H^{\otimes n}}$ is the multiplication of $H^{\otimes n}$.
We saw that the twisted antipode $\widetilde{S}$ on $K^\vee$ associated to
$(ev_\sigma,\Character)$ was $(\sigma S)^\vee$, the dual of $\sigma S$.
Therefore $\psi_n\circ \tau_n=t_n^\vee\circ\phi_n$. In particular when $n=1$, since $\psi_1$ is the identity, $\tau_1=t_1^\vee$.
So finally, $\psi_*:\Omega(K^\vee)_{(ev_\sigma,\Character)}\buildrel{\cong}\over\rightarrow (B(K)^{(\Character, \sigma)})^\vee$ is an isomorphism of cocyclic modules.
\end{proof}
\begin{corollary}\label{dualite operad cyclic bar cobar}
Let $K$ be a finite dimensional
Hopf algebra equipped with a group-like element $\sigma$ such that $\forall k\in K$,
$S\circ S(k)=\sigma^{-1}k\sigma$.
Then
$\psi_*:\Omega(K^\vee)_{(ev_\sigma,\varepsilon)}\buildrel{\cong}\over\rightarrow (B(K)^{(\varepsilon, \sigma)})^\vee$
is an isomorphism of cyclic operads with multiplication.
In particular, $H^*(\psi_*):\text{Cotor}_{K^\vee}^*(\mathbb{F},\mathbb{F})\buildrel{\cong}\over\rightarrow\text{Ext}_K^*(\mathbb{F},\mathbb{F})$ is an isomorphism of Batalin-Vilkovisky algebras
and 
$\psi_*$ induces an isomorphism of graded Lie algebras
$HC^*_{(ev_\sigma,\varepsilon)}(K^\vee)\buildrel{\cong}\over\rightarrow
\widetilde{HC}^*_{(\varepsilon,\sigma)}(K)$.
\end{corollary}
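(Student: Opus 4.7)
The plan is to combine Proposition~\ref{dualite Connes-Moscovici Khalkhali-Rangipour} with the operadic compatibility implicit in Lemma~\ref{comparaison bar cobar hochschild}, after which the BV-algebra and Lie-algebra assertions follow formally from Theorems~\ref{Theoreme BV algebre} and~\ref{crochet sur cohomologie cyclique lambda}.

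First I would verify that the hypothesis translates exactly to $(\varepsilon,\sigma)$ being a modular pair in involution on $K$ in the sense of Khalkhali-Rangipour. For $\Character=\varepsilon$ the cyclic operator of that theory simplifies to $t_1(k)=\sigma S(k)$, and a short computation using that $\sigma$ is group-like (so $S(\sigma)=\sigma^{-1}$) gives $t_1^2(k)=\sigma S^2(k)\sigma^{-1}$. This equals $k$ precisely under the hypothesis $S^2(k)=\sigma^{-1}k\sigma$. By Proposition~\ref{dualite Connes-Moscovici Khalkhali-Rangipour}(i) the pair $(ev_\sigma,\varepsilon)$ is then a modular pair in involution on $K^\vee$ in the sense of Connes-Moscovici.

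Next I would assemble the cyclic operad with multiplication structure on both sides. By Corollary~\ref{Cobar algebre de Hopf BV} (via the computation cited from Section~5 of~\cite{MenichiL:BValgaccoHa}), $\Omega(K^\vee)_{(ev_\sigma,\varepsilon)}$ is a cyclic operad with multiplication. Proposition~\ref{dualite Connes-Moscovici Khalkhali-Rangipour}(ii) gives that $\psi_*$ is an isomorphism of cocyclic modules. The missing ingredient is that the operadic composition $\circ_i$ is transported by $\psi_*$; but $\circ_i$ only uses the bialgebra structure of $K^\vee$ (cf.\ formula~(\ref{Composition operad cobar})) and is insensitive to the modular twist, so this is exactly Lemma~\ref{comparaison bar cobar hochschild} applied with $C=K^\vee$ (using the finite-dimensionality hypothesis to get an isomorphism), or equivalently the commutative square of linear operads with multiplication at the end of the proof of Proposition~\ref{dualite entre inclusions Gerstenhaber}. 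Transporting the cyclic structure along $\psi_*$ therefore endows $(B(K)^{(\varepsilon,\sigma)})^\vee$ with a cyclic operad with multiplication structure making $\psi_*$ an isomorphism of such.

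The two remaining statements are then formal. Theorem~\ref{Theoreme BV algebre} turns both cohomologies into Batalin-Vilkovisky algebras and $H^*(\psi_*)$ into an isomorphism of BV algebras; recognizing $H^*(\Omega K^\vee)=\text{Cotor}^*_{K^\vee}(\mathbb{F},\mathbb{F})$ and $H^*((BK)^\vee)=HH^*(K,\mathbb{F})=\text{Ext}^*_K(\mathbb{F},\mathbb{F})$ yields the BV isomorphism. Similarly, Theorem~\ref{crochet sur cohomologie cyclique lambda} gives Lie brackets of degree $-1$ on both cyclic cohomologies, and since $\psi_*$ is an iso of cocyclic modules it restricts to an iso on the $\lambda$-cyclic subcomplexes, yielding the claimed isomorphism of graded Lie algebras. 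The only delicate point throughout is the clean decoupling between the operadic $\circ_i$ and the cyclic operator $\tau_n$, which is what allows one to combine the untwisted operadic compatibility (Lemma~\ref{comparaison bar cobar hochschild}) with the twisted cyclic compatibility (Proposition~\ref{dualite Connes-Moscovici Khalkhali-Rangipour}) without any further calculation.
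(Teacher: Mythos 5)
Your proposal is correct and follows essentially the same route as the paper: the operadic compatibility of $\psi_*$ comes from the untwisted duality $\phi$ (Lemma~\ref{comparaison bar cobar hochschild}, i.e.\ the commutative square in the proof of Proposition~\ref{dualite entre inclusions Gerstenhaber}, combined with the bidual identification $K\cong K^{\vee\vee}$), the cocyclic compatibility is Proposition~\ref{dualite Connes-Moscovici Khalkhali-Rangipour}~ii), and the BV and Lie statements then follow formally from Theorems~\ref{Theoreme BV algebre} and~\ref{crochet sur cohomologie cyclique lambda}. Your explicit verification that the hypothesis $S^2(k)=\sigma^{-1}k\sigma$ is the Khalkhali--Rangipour involution condition for $(\varepsilon,\sigma)$, hence gives a Connes--Moscovici modular pair in involution $(ev_\sigma,\varepsilon)$ on $K^\vee$, is left implicit in the paper but is exactly the right justification for invoking Corollary~\ref{Cobar algebre de Hopf BV} on the source.
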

\begin{proof}
The canonical injection of $K$ into its bidual $K^{\vee\vee}$,
$\nu:K\hookrightarrow K^{\vee\vee}$, is an isomorphism of bialgebras.
Let $C:=K^\vee$ be the dual bialgebra.
In the proof of Proposition~\ref{dualite entre inclusions Gerstenhaber},
we saw that $\phi:\Omega C\buildrel{\cong}\over\rightarrow B(C^\vee)^\vee$
is an isomorphism of linear operads with multiplication.
Therefore the composite
$
\Omega (K^\vee)\buildrel{\phi}\over\rightarrow B(K^{\vee\vee})^\vee
\buildrel{B(\nu)^\vee}\over\rightarrow B(K)^\vee$ is also
an isomorphism of linear operads with multiplication.
But this composite coincides with the isomorphism of cocyclic
modules $\psi_*:\Omega(K^\vee)_{(ev_\sigma,\varepsilon)}\buildrel{\cong}\over\rightarrow (B(K)^{(\varepsilon, \sigma)})^\vee$ given by part
ii) of Proposition~\ref{dualite Connes-Moscovici Khalkhali-Rangipour}.
\end{proof}
\begin{theorem}\label{ext BV algebre}
Let $K$ be a Hopf algebra equipped with a group-like element $\sigma$ such
that for all $k\in K$, $S^2(k)=\sigma^{-1}k\sigma$.
Let $t_n:K^{\otimes n}\rightarrow K^{\otimes n}$ be the linear map
defined by
$$t_n(k_1\otimes\dots\otimes k_n)=\sigma S(k_1^{(1)}\dots k_{n-1}^{(1)}k_n)\otimes
k_1^{(2)}\otimes\dots\otimes k_{n-1}^{(2)}.$$
The dual of the Bar construction on $K$, $B(K)^\vee$ is a cyclic operad
with multiplication.
In particular, the Gerstenhaber algebra given by
Theorem~\ref{ext sous algebre de Gerstenhaber de Hochschild},
$\text{Ext}_K^*({\Bbbk},{\Bbbk})$, is in fact a Batalin-Vilkovisky algebra
and the cyclic cohomology of $K$, $\widetilde{HC}^*_{(\varepsilon,\sigma)}(K)$
has a Lie bracket of degre $-1$.
\end{theorem}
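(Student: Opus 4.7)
The plan is to verify that $B(K)^\vee$, equipped with the linear operad with multiplication structure from the proof of Theorem~\ref{ext sous algebre de Gerstenhaber de Hochschild} and with the operators $\tau_n := t_n^\vee$, forms a cyclic operad with multiplication in the sense of Definition~\ref{operade cyclique avec multiplication}. Once this is established, the conclusion is immediate: Theorem~\ref{Theoreme BV algebre} applied to this cyclic operad gives the Batalin-Vilkovisky structure on $H^*(B(K)^\vee)=\text{Ext}_K^*(\Bbbk,\Bbbk)$, while Theorem~\ref{crochet sur cohomologie cyclique lambda} yields the degree $-1$ Lie bracket on $HC^*_\lambda(B(K)^\vee)=\widetilde{HC}^*_{(\varepsilon,\sigma)}(K)$.

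The first step is to recognize that the $t_n$ in the statement are precisely the Khalkhali-Rangipour/Taillefer cyclic operators associated to the modular pair $(\varepsilon,\sigma)$: when the character is the counit, the twisted antipode reduces to $S$ itself, and the modular pair in involution condition $t_1^2=\text{id}$ becomes $(\sigma S)^2=\text{id}$, i.e. $S^2(k)=\sigma^{-1}k\sigma$. Thus, by~\cite[Theorem~2.1]{Khalkhali-Rangipour:newcyclic}, $B(K)^{(\varepsilon,\sigma)}$ is a cyclic module; dualizing, $B(K)^\vee$ with the $\tau_n=t_n^\vee$ is a cocyclic module and in particular $\tau_n^{n+1}=\text{id}$.

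The second step, the heart of the argument, is to verify the two compatibility axioms~(\ref{deux cyclic}) and~(\ref{trois cyclic}) relating $\tau_n$ with the operadic compositions $\circ_i$, together with the multiplicative condition $\tau_2\mu=\mu$. For the latter, the operadic multiplication is $\mu=\varepsilon\circ\mu_K\in(K\otimes K)^\vee$ and the condition reduces, via the antipode axiom $S(k^{(1)})k^{(2)}=\varepsilon(k)1$, to a short direct computation. For the compatibility axioms, I would work on the comodule side $\mathcal{E}nd_{K\text{-comod}}(K)$ where the operadic composition $\circ_i$ is given by ordinary substitution of morphisms, and transport $\tau_n$ along the isomorphism $\text{lift}$; the formula for $\circ_i$ on $(K^{\otimes m})^\vee\otimes(K^{\otimes n})^\vee$ is the explicit one derived in the proof of Property~\ref{ext en degre un egal dual indecomposable comme algebre de Lie}, involving Sweedler expansions of the $K$-comodule structure.

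The main obstacle will be this second step: a bare-hands verification of~(\ref{deux cyclic}) and~(\ref{trois cyclic}) is essentially a cyclic symmetry computation in Sweedler notation, mixing the iterated coproduct, the antipode, the group-like $\sigma$, and the counit. The case $2\leq i\leq m$ of~(\ref{trois cyclic}) should fall out directly since $t_n$ acts on the ``tail'' of the tensor and $\circ_i$ for $i\geq 2$ does not touch the leftmost slot singled out by the antipode; the case $i=1$ in~(\ref{deux cyclic}) is the subtle one, where one must push the antipode past an operadic composition and the hypothesis $S^2(k)=\sigma^{-1}k\sigma$ enters in an essential way (as it does in establishing the involution). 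An alternative shortcut is available when $K$ is finite dimensional: Corollary~\ref{dualite operad cyclic bar cobar} already provides an isomorphism of cyclic operads with multiplication $\Omega(K^\vee)_{(\text{ev}_\sigma,\varepsilon)}\cong (B(K)^{(\varepsilon,\sigma)})^\vee$, so everything reduces to the cyclic operad structure on the cobar side established in the proof of Corollary~\ref{Cobar algebre de Hopf BV}; for general $K$ one is forced into the direct verification.
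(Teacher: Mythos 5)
Your proposal is correct and follows essentially the same route as the paper: show that $B(K)^\vee$, with the operad-with-multiplication structure from the proof of Theorem~\ref{ext sous algebre de Gerstenhaber de Hochschild} and the cocyclic operators $\tau_n=t_n^\vee$ (which are indeed the Khalkhali--Rangipour operators for the modular pair $(\varepsilon,\sigma)$), is a cyclic operad with multiplication, and then conclude by Theorems~\ref{Theoreme BV algebre} and~\ref{crochet sur cohomologie cyclique lambda}. The only difference is how the cyclic-operad axioms~(\ref{deux cyclic}), (\ref{trois cyclic}) and $\tau_2\mu=\mu$ are discharged: the paper cites the computations dual to~\cite[Proof of Theorem 1.1]{MenichiL:BValgaccoHa}, with Corollary~\ref{dualite operad cyclic bar cobar} making the duality explicit in the finite-dimensional case, whereas you propose carrying out the Sweedler-notation verification directly, which is exactly that dual computation and goes through as you describe.
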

\begin{proof}
Corollary~\ref{dualite operad cyclic bar cobar} explains in details
that this Theorem is the dual of
Corollaries~\ref{Cobar algebre de Hopf BV} and~\ref{crochet sur Hopf cyclic cohomologie}. Therefore, the computations dual to~\cite[Proof of Theorem 1.1]{MenichiL:BValgaccoHa} show that the operad with multiplication 
$B(K)^\vee$ given in the proof of
Theorem~\ref{ext sous algebre de Gerstenhaber de Hochschild}
together with the cyclic operators $t_n$ defines a cyclic operad with
multiplication.
Using Theorems~\ref{Theoreme BV algebre}
and~\ref{crochet sur cohomologie cyclique lambda}, we conclude.
\end{proof}
\section{Characteristic maps}
\begin{lemma}\label{morphisme de Gerstenhaber de cotor vers cohomologie de Hochschild}
Let $H$ be a bialgebra.
Let $A$ be a left module algebra over $H$ (in the sense of~\cite[Definition V.6.1]{Kassel:quantumgrps}).
Then the application $\Phi:H^{\otimes n}\rightarrow \text{Hom}_{{\Bbbk}-mod}(A^{\otimes n},A)$
mapping $h_1\otimes\dots\otimes h_n$ to $f:A^{\otimes n}\rightarrow A$ defined by
$f(a_1\otimes\dots\otimes a_n)=(h_1.a_1)\dots(h_n.a_n)$
defines a morphism of linear operads with multiplication from the coendomorphism operad of $H$,
$\mathcal{C}o\mathcal{E}nd_{H-mod}(H)$, to the endomorphism operad of $A$, $\mathcal{E}nd_{{\Bbbk}-mod}(A)$.
In particular, $\Phi$ induces a morphism of Gerstenhaber algebras
$H^*(\Phi):\text{Cotor}_H^*({\Bbbk},{\Bbbk})\rightarrow HH^*(A,A)$.
\end{lemma}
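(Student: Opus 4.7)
The plan is to verify directly that $\Phi$ is a morphism of linear operads with multiplication, going from the identification $\mathcal{C}o\mathcal{E}nd_{H\text{-mod}}(H)(n)\cong H^{\otimes n}$ established in the proof of Theorem~\ref{cotor sous algebre de Gerstenhaber de Hochschild} to $\mathcal{E}nd_{\Bbbk\text{-mod}}(A)(n)=\text{Hom}(A^{\otimes n},A)$. Once this is done, applying Theorem~\ref{Theoreme Gerstenhaber} and recognizing the associated cochain complexes as $\Omega H$ on one side (cohomology $\text{Cotor}^*_H(\Bbbk,\Bbbk)$) and $\mathcal{C}^*(A,A)$ on the other (cohomology $HH^*(A,A)$) immediately yields the desired morphism of Gerstenhaber algebras $H^*(\Phi)$.

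First I would dispose of the structural data. Under $ev$, the multiplication of the source operad is $1_H\otimes 1_H\in H^{\otimes 2}$ and the unit is $1_\Bbbk\in H^{\otimes 0}$, while the identity in degree~$1$ is $1_H\in H$. Since $1_H$ acts as the identity on $A$ (module algebra axiom), $\Phi(1_H\otimes 1_H)$ sends $a_1\otimes a_2$ to $(1_H.a_1)(1_H.a_2)=a_1a_2=\mu_A$, while defining $\Phi_0(1_\Bbbk):=1_A$ respects the unit and $\Phi(1_H)=id_A$. So $\Phi$ matches identity, multiplication and unit.

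The main step is the compatibility of $\Phi$ with the partial compositions $\circ_i$. Using formula~(\ref{Composition operad cobar}) for $x=h_1\otimes\dots\otimes h_m$ and $y=k_1\otimes\dots\otimes k_n$, one has
\[
\Phi(x\circ_i y)(a_1\otimes\dots\otimes a_{m+n-1})=(h_1.a_1)\cdots(h_{i-1}.a_{i-1})\prod_{j=1}^{n}\bigl((h_i^{(j)}k_j).a_{i+j-1}\bigr)(h_{i+1}.a_{i+n})\cdots(h_m.a_{m+n-1}),
\]
where $h_i^{(1)}\otimes\dots\otimes h_i^{(n)}=\Delta^{n-1}(h_i)$. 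On the other hand, $\Phi(x)\circ_i\Phi(y)$ plugs $\Phi(y)(a_i\otimes\dots\otimes a_{i+n-1})=(k_1.a_i)\cdots(k_n.a_{i+n-1})$ into the $i$-th slot of $\Phi(x)$, producing a factor $h_i.\bigl((k_1.a_i)\cdots(k_n.a_{i+n-1})\bigr)$. The module algebra axiom iterated $n-1$ times rewrites this factor as $\prod_{j=1}^{n}\bigl(h_i^{(j)}.(k_j.a_{i+j-1})\bigr)$, and the action axiom $(gh).a=g.(h.a)$ collapses each $h_i^{(j)}.(k_j.a_{i+j-1})$ into $(h_i^{(j)}k_j).a_{i+j-1}$. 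The two expressions therefore agree.

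The hard part is just this last computation: it is not difficult, but it requires careful bookkeeping of Sweedler indices and the combination of the two module algebra axioms (action associativity and compatibility with the product of $A$), which is precisely the content of $A$ being a module algebra. Having checked that $\Phi$ is a morphism of operads with multiplication, Theorem~\ref{Theoreme Gerstenhaber} produces the induced morphism of Gerstenhaber algebras $H^*(\Phi):\text{Cotor}^*_H(\Bbbk,\Bbbk)\to HH^*(A,A)$.
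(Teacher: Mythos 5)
Your proposal is correct and follows essentially the same route as the paper's own proof: identify $\mathcal{C}o\mathcal{E}nd_{H\text{-}mod}(H)(n)$ with $H^{\otimes n}$, compute $\Phi(x\circ_i y)$ via formula~(\ref{Composition operad cobar}), compare with $\Phi(x)\circ_i\Phi(y)$ using the iterated module algebra axiom $h.(ab)=(h^{(1)}.a)(h^{(2)}.b)$ together with $(gh).a=g.(h.a)$, and check the identity, multiplication and unit, after which Theorem~\ref{Theoreme Gerstenhaber} gives the Gerstenhaber morphism. No gaps.
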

\begin{proof}
Since $1_H.a_1=a_1$, $\Phi(1_H)=id_A$.
Let $h_1\otimes\dots\otimes h_m\in H^{\otimes m}$, $k_1\otimes\dots\otimes k_m\in H^{\otimes n}$ and
$a_1\otimes\dots\otimes a_{m+n-1}\in A^{\otimes m+n-1}$.
Using~(\ref{Composition operad cobar}), we have that
$\Phi[(h_1\otimes\dots\otimes h_m)\circ_i(k_1\otimes\dots\otimes k_n)]
$ evaluated on $a_1\otimes\dots\otimes a_{m+n-1}$ is equal to the product
$$(h_1. a_1)\dots (h_{i-1}. a_{i-1})(h_{i}^{(1)}k_1. a_{i})\dots
(h_{i}^{(n)}k_n. a_{i+n-1})(h_{i+1}. a_{i+n})\dots (h_m.a_{m+n-1}).
$$
On the other hand, using example~\ref{operade endomorphisme},
$\Phi(h_1\otimes\dots\otimes h_m)\circ_i\Phi(k_1\otimes\dots\otimes k_n)$
evaluated on $a_1\otimes\dots\otimes a_{m+n-1}$ is equal to the product
$$(h_1. a_1)\dots (h_{i-1}. a_{i-1})
\left(h_{i}.[(k_1. a_{i})\dots(k_n. a_{i+n-1})]\right)
(h_{i+1}. a_{i+n})\dots (h_m.a_{m+n-1}).
$$
Since for any $h\in H$, $a$ and $b\in A$,
$h.(ab)=(h^{(1)}.a)(h^{(2)}.a)$, the previous two products are equal.
So $\Phi$ is a morphism of operads.
Now $\Phi$ is a morphism of operads with multiplication,
since $\Phi(1_{\Bbbk})$ is the unit map $\eta:{\Bbbk}\rightarrow A$
and since $\Phi(1_H\otimes 1_H)$ is the multiplication $\mu:A\otimes A\rightarrow A$
of $A$.
\end{proof}
The following Lemma is a variant of the previous lemma if $H$ is finite dimensional,
since in this case, $A$ is a left module algebra over $H$
if and only if $A$ be a right comodule algebra over the dual of $H$, $H^\vee$.
\begin{lemma}\label{morphisme de Gerstenhaber de Ext vers cohomologie de Hochschild}
Let $H$ be a bialgebra.
Let $A$ be a right comodule algebra over $H$ (in the sense of~\cite[Definition III.7.1]{Kassel:quantumgrps}).
Then the application $\Phi:(H^{\otimes n})^\vee\rightarrow \text{Hom}_{{\Bbbk}-mod}(A^{\otimes n},A)$ mapping $f:H^{\otimes n}\rightarrow {\Bbbk}$
to $F:A^{\otimes n}\rightarrow A$ defined by
$F(a_1\otimes\dots\otimes a_n)=
a_1^{(1)} \dots a_n^{(1)}
f(a_1^{(2)}\otimes\dots\otimes a_n^{(2)}$).
defines a morphism of linear operads with multiplication from the endomorphism operad of $H$,
$\mathcal{E}nd_{H-comod}(H)$, to the endomorphism operad of $A$, $\mathcal{E}nd_{{\Bbbk}-mod}(A)$.
In particular, $\Phi$ induces a morphism of Gerstenhaber algebras
$H^*(\Phi):\text{Ext}_H^*({\Bbbk},{\Bbbk})\rightarrow HH^*(A,A)$.
\end{lemma}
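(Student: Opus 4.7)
The plan is to verify directly that $\Phi$ is a morphism of linear operads with multiplication, dualizing the argument for Lemma~\ref{morphisme de Gerstenhaber de cotor vers cohomologie de Hochschild}. The source operad $\mathcal{E}nd_{H-comod}(H)$ is identified in the proof of Theorem~\ref{ext sous algebre de Gerstenhaber de Hochschild} (via the inverse isomorphisms $proj$ and $lift$) with the operad $\mathcal{O}$ on the modules $(H^{\otimes n})^\vee$, whose composition $\circ_i$ is written down explicitly in the proof of Property~\ref{ext en degre un egal dual indecomposable comme algebre de Lie}. Writing the right coaction of $H$ on $A$ as $\Delta_A(a) = a^{0} \otimes a^{1}$, it is convenient to observe that $\Phi(f)$ is the composite
$$A^{\otimes n} \xrightarrow{\Delta_A^{\otimes n}} (A \otimes H)^{\otimes n} \cong A^{\otimes n} \otimes H^{\otimes n} \xrightarrow{\mu_A^{(n-1)} \otimes f} A \otimes {\Bbbk} = A.$$

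First I would dispose of the easy axioms. The identity $\varepsilon \in H^\vee = \mathcal{O}(1)$ is sent to $\Phi(\varepsilon)(a) = a^{0}\varepsilon(a^{1}) = a$ by the counit axiom of the coaction; the multiplication element $\varepsilon \circ \mu_H \in \mathcal{O}(2)$ is sent to $\Phi(\varepsilon \circ \mu_H)(a \otimes b) = a^{0} b^{0} \varepsilon(a^{1})\varepsilon(b^{1}) = ab$; and the unit $id_{\Bbbk} \in \mathcal{O}(0)$ is sent to $\eta_A$.

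The main computation is $\Phi(f \circ_i g) = \Phi(f) \circ_i \Phi(g)$ for $f \in \mathcal{O}(m)$ and $g \in \mathcal{O}(n)$. On $a_1 \otimes \cdots \otimes a_{m+n-1}$, the right-hand side requires the coaction of $b := \Phi(g)(a_i \otimes \cdots \otimes a_{i+n-1}) = a_i^{0} \cdots a_{i+n-1}^{0} \cdot g(a_i^{1} \otimes \cdots \otimes a_{i+n-1}^{1})$. Since the scalar factor $g(\cdots)$ coacts trivially and since $\Delta_A$ is a morphism of algebras (the comodule algebra axiom), one gets $\Delta_A(b) = (a_i^{0})^{0} \cdots (a_{i+n-1}^{0})^{0} \otimes (a_i^{0})^{1} \cdots (a_{i+n-1}^{0})^{1} \cdot g(a_i^{1} \otimes \cdots \otimes a_{i+n-1}^{1})$. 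Applying the coassociativity identity $(a^{0})^{0} \otimes (a^{0})^{1} \otimes a^{1} = a^{0} \otimes (a^{1})_{(1)} \otimes (a^{1})_{(2)}$ for each $j$ rewrites this as $a_i^{0} \cdots a_{i+n-1}^{0} \otimes (a_i^{1})_{(1)} \cdots (a_{i+n-1}^{1})_{(1)} \cdot g((a_i^{1})_{(2)} \otimes \cdots \otimes (a_{i+n-1}^{1})_{(2)})$. Substituting $b^{0}$ and $b^{1}$ into $\Phi(f)(\cdots \otimes b \otimes \cdots)$, and collapsing the product of the $a_j^{0}$'s in $A$ by associativity, matches exactly $\Phi(f \circ_i g)(a_1 \otimes \cdots \otimes a_{m+n-1})$ when the $\mathcal{O}$-composition formula is expanded.

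The main obstacle is purely notational: handling the four layers of Sweedler indices (two for the $H$-coproduct, two for the $A$-coaction) without confusion; no conceptual difficulty arises, and in fact this verification is precisely the dual (not requiring finite dimensionality of $H$) of the module-algebra calculation from Lemma~\ref{morphisme de Gerstenhaber de cotor vers cohomologie de Hochschild}. Once $\Phi$ is shown to be a morphism of operads with multiplication, Theorem~\ref{Theoreme Gerstenhaber} delivers the induced morphism of Gerstenhaber algebras $H^*(\Phi): \text{Ext}_H^*({\Bbbk}, {\Bbbk}) \to HH^*(A, A)$.
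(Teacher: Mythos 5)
Your proposal is correct and follows essentially the route the paper intends: the paper only remarks that the proof is ``a computation similar to the proof of Lemma~\ref{morphisme de Gerstenhaber de cotor vers cohomologie de Hochschild}'', and you carry out exactly that dual computation, using the identification of $\mathcal{E}nd_{H\text{-comod}}(H)$ with the operad $(H^{\otimes\bullet})^\vee$ via $proj$/$lift$ and verifying compatibility with $\circ_i$, the identity, the multiplication $\varepsilon\circ\mu_H$ and the unit, with the comodule-algebra and coassociativity axioms used in the right places.
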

Note that in the case $A=H$, $H^*(\Phi)$ coincides with the inclusion
of Gerstenhaber algebras given by
Theorem~\ref{ext sous algebre de Gerstenhaber de Hochschild}.
The proof of Lemma~\ref{morphisme de Gerstenhaber de Ext vers cohomologie de Hochschild} is a computation similar to the proof of Lemma~\ref{morphisme de Gerstenhaber de cotor vers cohomologie de Hochschild}.
\begin{theorem}\label{characteristic de Khalkhali-Rangipour morphisme de Lie}
Let $H$ be a Hopf algebra equipped with a group-like element $\sigma\in H$
such that $\forall h\in H$, $S^2(h)=\sigma^{-1}h\sigma$.
Let $A$ be a right comodule algebra over $H$.
Let $\tau:A\rightarrow {\Bbbk}$ be a non degenerate $1$-trace, i. e.
the morphism of left $A$-modules
$\Theta:A\buildrel{\cong}\over\rightarrow A^\vee$, mapping $b\in A$
to $\varphi:A\rightarrow {\Bbbk}$ given by
$\varphi(a)=\tau(ab)$, is an isomorphism of $A$-bimodules.
Suppose that $\tau$ is $\sigma$-invariant in the sense of~\cite[Definition 3.1]{Khalkhali-Rangipour:newcyclic}: $\forall a,b\in A$, $\tau(a^{(1)})a^{(2)}=\tau(a)\sigma$.
Then 

1) the morphism 
$H^*(\Phi):\text{Ext}_H^*({\Bbbk},{\Bbbk})\rightarrow HH^*(A,A)$
given by
Lemma~\ref{morphisme de Gerstenhaber de cotor vers cohomologie de Hochschild},
is a morphism of Batalin-Vilkovisky algebras,

2) the characteristic map defined by Khalkhali-Rangipour~\cite[(10)]{Khalkhali-Rangipour:newcyclic} $\gamma^*:\widetilde{HC}_{(\varepsilon,\sigma)}^*(H)\rightarrow HC^*_\lambda(A)$
is a morphism of graded Lie algebras.
\end{theorem}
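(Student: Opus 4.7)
The plan is to recognize both the source and the target of $\Phi$ as cohomologies of cyclic operads with multiplication, and then to upgrade the morphism of operads with multiplication $\Phi\colon \mathcal{E}nd_{H\text{-comod}}(H) \rightarrow \mathcal{E}nd_{{\Bbbk}\text{-mod}}(A)$ from Lemma~\ref{morphisme de Gerstenhaber de Ext vers cohomologie de Hochschild} to a morphism of \emph{cyclic} operads with multiplication. On the source side, Theorem~\ref{ext BV algebre} applied to $K=H$ with group-like element $\sigma$ (using the hypothesis $S^2(h)=\sigma^{-1}h\sigma$) provides the cyclic structure on $B(H)^\vee \cong \mathcal{E}nd_{H\text{-comod}}(H)$. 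On the target side, the non-degenerate $1$-trace $\tau$ with $\Theta\colon A\cong A^\vee$ an $A$-bimodule isomorphism is precisely the assumption that $A$ is symmetric Frobenius, so by the proof of Corollary~\ref{Cohomologie Hochschild algebre symetrique BV}, $\mathcal{C}^*(A,A)$ carries the cyclic operators $\tau_n$ obtained via the isomorphism $\mathrm{Ad}\circ \mathcal{C}^*(A,\Theta)\colon \mathcal{C}^*(A,A)\cong \mathcal{C}_*(A,A)^\vee$.

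The key step is the cyclic compatibility $\Phi \circ t_n^\vee = \tau_n \circ \Phi$ at the level of cochains, where $t_n^\vee$ is the cyclic operator on $B(H)^\vee$ dual to the $t_n$ of Theorem~\ref{ext BV algebre}. Transferring both sides through $\mathrm{Ad}\circ \mathcal{C}^*(A,\Theta)$, they can each be rewritten as an element of ${\Bbbk}$ obtained by pairing $\tau$ with a single monomial in $A$ assembled from iterated coactions, the antipode, and the multiplication of $A$. Matching the two monomials amounts to combining three ingredients: the right comodule-algebra axiom $\Delta_A(ab) = a^{(1)}b^{(1)}\otimes a^{(2)}b^{(2)}$, which allows coactions to pass through products; the involution condition $S^2(h) = \sigma^{-1}h\sigma$, which is needed to deal with the double antipodes that appear when cyclically rotating the arguments; and the $\sigma$-invariance $\tau(a^{(1)})a^{(2)} = \tau(a)\sigma$, which is exactly what absorbs the trailing factor $\sigma$ appearing in the definition of $t_n$ and converts it into the cyclic rotation on the Hochschild side. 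This is the natural $A$-side enhancement of the computation carried out in~\cite[Section 5]{MenichiL:BValgaccoHa} and in the proof of Theorem~\ref{ext BV algebre}, and the main obstacle is purely one of bookkeeping, making sure that the iterated Sweedler indices on the $H$-side and on the $A$-side line up correctly.

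Once this identity is established, $\Phi$ is a morphism of cyclic operads with multiplication and both conclusions follow from the general machinery of Section~\ref{algebres de Batalin-Vilkovisky}. Theorem~\ref{Theoreme BV algebre} implies that $H^*(\Phi)\colon \text{Ext}_H^*({\Bbbk},{\Bbbk}) \rightarrow HH^*(A,A)$ commutes with the Connes coboundary $B$ on both sides, hence is a morphism of Batalin-Vilkovisky algebras, which is (1). Theorem~\ref{crochet sur cohomologie cyclique lambda} implies that $\Phi$ restricts to a map of $\lambda$-cyclic subcomplexes that respects the Lie bracket of degree $-1$. Finally, comparing the explicit formula for $\Phi$ in Lemma~\ref{morphisme de Gerstenhaber de Ext vers cohomologie de Hochschild} with the definition~\cite[(10)]{Khalkhali-Rangipour:newcyclic} of the characteristic map (via the same identification $\mathrm{Ad}\circ\mathcal{C}^*(A,\Theta)$) identifies the map induced by $\Phi$ on $\lambda$-cyclic cohomology with $\gamma^*$, yielding (2).
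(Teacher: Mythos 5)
Your proposal is correct and follows essentially the same route as the paper: recognize $B(H)^\vee\cong\mathcal{E}nd_{H\text{-comod}}(H)$ (via Theorem~\ref{ext BV algebre}) and $\mathcal{C}^*(A,A)\cong\mathcal{C}_*(A,A)^\vee$ (via $\mathrm{Ad}\circ\mathcal{C}^*(A,\Theta)$, Corollary~\ref{Cohomologie Hochschild algebre symetrique BV}) as cyclic operads with multiplication, show $\Phi$ is a morphism of cyclic operads with multiplication, and conclude with Theorems~\ref{Theoreme BV algebre} and~\ref{crochet sur cohomologie cyclique lambda}. The only cosmetic difference is that where you propose to check the commutation with the cyclic operators by a direct Sweedler-index computation, the paper identifies $\Phi$ transported through $\mathrm{Ad}\circ\mathcal{C}^*(A,\Theta)$ with the dual $\gamma^\vee$ of the Khalkhali--Rangipour map $\gamma$ and cites its cyclicity, which simultaneously yields the identification with $\gamma^*$ needed for part~2.
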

\begin{proof}
Recall from the proof of
Corollary~\ref{Cohomologie Hochschild algebre symetrique BV} that
$$\mathcal{C}^*(A,A)\build\rightarrow_\cong^{\mathcal{C}^*(A,\Theta)}
\mathcal{C}^*(A,A^\vee)\build\rightarrow_\cong^{Ad}
\mathcal{C}_*(A,A)^\vee$$
is a cyclic operad with multiplication.
By Lemma~\ref{morphisme de Gerstenhaber de Ext vers cohomologie de Hochschild},
$\Phi:B(H)^\vee\rightarrow \mathcal{C}^*(A,A)$ is a morphism of linear operads with
multiplication. Let $\gamma:\mathcal{C}_*(A,A)\rightarrow B(H)$
be the morphism of cyclic modules defined
by~\cite[Proposition 3.1]{Khalkhali-Rangipour:newcyclic}
$$\gamma(a_0\otimes a_1\otimes\dots\otimes a_n)=\tau(a_0 a_1^{(1)}\dots a_n^{(1)})(a_1^{(2)}\otimes\dots\otimes a_n^{(2)}).$$
Here the coaction of $a_i$, $\Delta a_i=a_i^{(1)}\otimes a_i^{(2)}$.
A straightforward calculation shows that the composite
$$B(H)^\vee\buildrel{\Phi}\over\rightarrow\mathcal{C}^*(A,A)\build\rightarrow_\cong^{\mathcal{C}^*(A,\Theta)}
\mathcal{C}^*(A,A^\vee)\build\rightarrow_\cong^{Ad}
\mathcal{C}_*(A,A)^\vee$$
is the dual of $\gamma$, $\gamma^\vee:\mathcal{C}_*(A,A)^\vee\rightarrow B(H)^\vee$. Since $\gamma^\vee$ is a morphism of cocyclic modules,
$\Phi:B(H)^\vee\rightarrow \mathcal{C}^*(A,A)$ is a morphism of linear cyclic operads with multiplication. By applying Theorem~\ref{Theoreme BV algebre},
$H(\Phi)$ is a morphism of Batalin-Vilkovisky algebras between the
Batalin-Vilkovisky algebras given by Theorem~\ref{ext BV algebre}
and Corollary~\ref{Cohomologie Hochschild algebre symetrique BV}. This is 1).
By applying Theorem~\ref{crochet sur cohomologie cyclique lambda}, 
we obtain 2).
\end{proof}
Using this time, Lemma~\ref{morphisme de Gerstenhaber de cotor vers cohomologie de Hochschild} and the cocyclic map $\gamma$ defined by Connes and
Moscovici~\cite[Theorem 6]{Connes-Moscovici:symmetry},
we obtain easily the following variant of the previous Theorem.
\begin{theorem}\label{characteristic de Connes-Moscovici morphisme de Lie}
Let $H$ be a Hopf algebra endowed with a modular pair in
involution $(\Character,1)$. Let $A$ be a module algebra over $H$.
Let $\tau:A\rightarrow {\Bbbk}$ be a non degenerate $1$-trace, i. e.
the morphism of left $A$-modules
$\Theta:A\buildrel{\cong}\over\rightarrow A^\vee$, mapping $b\in A$
to $\varphi:A\rightarrow {\Bbbk}$ given by
$\varphi(a)=\tau(ab)$, is an isomorphism of $A$-bimodules.
Suppose that $\tau$ is $\Character$-invariant.
Then 

1) the morphism 
$H^*(\Phi):\text{Cotor}_H^*({\Bbbk},{\Bbbk})\rightarrow HH^*(A,A)$
given by
Lemma~\ref{morphisme de Gerstenhaber de cotor vers cohomologie de Hochschild},
is a morphism of Batalin-Vilkovisky algebras,

2) the characteristic map defined by Connes and Moscovici
$\chi_{\tau}:HC_{(\Character,1)}^*(H)\rightarrow HC^*_\lambda(A)$
is a morphism of graded Lie algebras.
\end{theorem}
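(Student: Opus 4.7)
The plan is to promote the morphism $\Phi$ from Lemma~\ref{morphisme de Gerstenhaber de cotor vers cohomologie de Hochschild} from a morphism of linear operads with multiplication to a morphism of \emph{cyclic} operads with multiplication. On the source side, the operad with multiplication $\mathcal{C}o\mathcal{E}nd_{H-mod}(H)\cong\Omega H$ acquires a cyclic structure via the Connes--Moscovici cocyclic operators that were already used in the proof of Corollary~\ref{Cobar algebre de Hopf BV}. On the target side, the operad with multiplication $\mathcal{E}nd_{{\Bbbk}-mod}(A)\cong\mathcal{C}^*(A,A)$ acquires a cyclic structure from the Frobenius trace $\tau$, as recalled in the proof of Corollary~\ref{Cohomologie Hochschild algebre symetrique BV}. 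Once $\Phi$ is shown to be compatible with these cyclic operators, part~1) follows from Theorem~\ref{Theoreme BV algebre} and part~2) follows from Theorem~\ref{crochet sur cohomologie cyclique lambda}, exactly as at the end of the proof of Theorem~\ref{characteristic de Khalkhali-Rangipour morphisme de Lie}.

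To check the cyclic compatibility, I would form the composite
$$\Omega H\build\rightarrow_{}^{\Phi}\mathcal{C}^*(A,A)\build\rightarrow_\cong^{\mathcal{C}^*(A,\Theta)}\mathcal{C}^*(A,A^\vee)\build\rightarrow_\cong^{Ad}\mathcal{C}_*(A,A)^\vee$$
and verify by direct computation that it sends $h_1\otimes\dots\otimes h_n$ to the linear form on $A^{\otimes (n+1)}$ mapping $a_0\otimes a_1\otimes\dots\otimes a_n$ to $\tau\bigl(a_0(h_1.a_1)\cdots (h_n.a_n)\bigr)$. This is precisely Connes and Moscovici's cocyclic characteristic map $\chi_\tau:\Omega H\rightarrow \mathcal{C}_*(A,A)^\vee$ constructed in~\cite[Theorem 6]{Connes-Moscovici:symmetry}, which under the hypotheses that $(\Character,1)$ be a modular pair in involution and that $\tau$ be a $\Character$-invariant $1$-trace is proved there to be a morphism of cocyclic modules. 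Since the last two arrows in the display above are isomorphisms of cocyclic modules (this is the content of the proof of Corollary~\ref{Cohomologie Hochschild algebre symetrique BV}), $\Phi$ itself must then respect the cocyclic structures.

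Having established that $\Phi$ is a morphism of cyclic operads with multiplication, Theorem~\ref{Theoreme BV algebre} yields statement 1) and Theorem~\ref{crochet sur cohomologie cyclique lambda} yields statement 2). The main obstacle is the bookkeeping required to identify $(Ad\circ\mathcal{C}^*(A,\Theta))\circ\Phi$ with Connes--Moscovici's $\chi_\tau$: this rests on the $H$-module algebra axiom $h\cdot(ab)=(h^{(1)}\cdot a)(h^{(2)}\cdot b)$ together with the $\Character$-invariance of $\tau$ (used to handle the last coface $\delta_{n+1}$ and the cyclic operator $\tau_n$ simultaneously). Once the identification with $\chi_\tau$ is in place, however, the cocyclicity is borrowed from Connes and Moscovici rather than recomputed, so no genuinely new cyclic calculation is needed.
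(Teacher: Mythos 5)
Your proposal is correct and follows essentially the same route as the paper: the paper proves this theorem as a ``variant'' of Theorem~\ref{characteristic de Khalkhali-Rangipour morphisme de Lie}, whose proof likewise identifies the composite $(Ad\circ\mathcal{C}^*(A,\Theta))\circ\Phi$ with the (co)cyclic characteristic map of Connes--Moscovici (resp.\ Khalkhali--Rangipour), borrows its cocyclicity, concludes that $\Phi$ is a morphism of cyclic operads with multiplication, and then invokes Theorems~\ref{Theoreme BV algebre} and~\ref{crochet sur cohomologie cyclique lambda}. The only difference is bookkeeping, not substance.
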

In~\cite[Theorem 6]{Connes-Moscovici:symmetry} or\cite[Section 4.4]{Skandalis:geononcom}, Connes and Moscovici have defined more generally a characteristic
map $\chi_{\tau}:HC_{(\Character,\sigma)}^*(H)\rightarrow HC^*_\lambda(A)$
without assuming that

i) the group-like element $\sigma$ is the unit $1$ of $H$,

\noindent and without assuming that

ii) the $\sigma$-trace $\tau$ is non degenerated.

But we need i) to have a Lie bracket on $HC_{(\Character,\sigma)}^*(H)$
(Corollary~\ref{crochet sur Hopf cyclic cohomologie})
and we need i) and ii) to have a Lie bracket on  $HC^*_\lambda(A)$
(Corollary~\ref{crochet sur la Cohomologie cyclic rationelle}).
However, note that in their first construction of the characteristic map
in~\cite{Connes-Mosco:hopfacctit}, Connes and Moscovici were assuming i) like us. We believe that ii) can be weakened, since the Batalin-Vilkovisky
algebra on $HH^*(A,A^\vee)$ can be defined for non counital symmetric
Frobenius algebras, i. e ``unital associative algebras with an invariant
co-inner product''~\cite[p. 61-2]{Tradler-Zeinalian:algebraicstringoperations}.
In particular, as Tradler explained us, $A$ does not need to be finite
dimensional.
\section{Hopf algebras that are symmetric Frobenius}\label{algebre de Hopf symmetrique}
In this section, we work over an arbitrary field $\mathbb{F}$.
We want to consider in Theorem~\ref{characteristic de Khalkhali-Rangipour morphisme de Lie}, the case
 where the the comodule algebra $A$ over $H$ is the Hopf algebra $H$ itself.
We remark that for a finite dimensional Hopf algebra $H$, there is a close relationship between being
a symmetric Frobenius algebra and being equipped with a modular pair in involution of the form $(\varepsilon,u)$
(Theorem~\ref{symmetrique ssi unimodulaire}). Therefore
(Theorem~\ref{inclusion BV ext hochschild}), for Hopf algebras which are symmetric Frobenius algebras, often we have an inclusion of Batalin-Vilkovisky
algebras $\text{Ext}^*_H(\mathbb{F},\mathbb{F})\hookrightarrow 
HH^*(H,H)$ and in some cases, the characteristic map
$\widetilde{HC}_{(\varepsilon,\sigma)}^*(H)\rightarrow HC^*_\lambda(H)$
is injective.

First, we recall the notion of (symmetric) Frobenius algebra
and that the Nakayama automorphisms of a symmetric Frobenius algebra
are all inner automorphisms. Then we recall that an augmented symmetric
Frobenius algebra is always unimodular.
Specializing to Hopf algebras, we recall that 
finite dimensional Hopf
algebras are always Frobenius algebras and that
the square $S\circ S$
of the antipode of an unimodular Hopf algebra
is a particular Nakayama automorphism.
Finally, we can recall Theorem~\ref{symmetrique ssi unimodulaire}
due to Oberst and Schneider~\cite{Oberst-Schneider},
which explains when a Hopf algebra is a symmetric Frobenius algebra.
In the proof of Theorem~\ref{symmetrique ssi unimodulaire},
we recall the construction of a non-degenerated trace $\tau$ on $H$.
Checking that the diagonal of $H$ is compatible with this trace $\tau$,
we obtain Theorem~\ref{inclusion BV ext hochschild}.
\subsection{Frobenius algebras}
Let $A$ be an algebra.
The morphism of right $A$-modules $\Theta:A\rightarrow A^\vee$,
mapping $1$ to the form $\phi$, is an isomorphism (of $A$-bimodules)
if and only if $A$ is finite dimensional and the bilinear form
$<\quad,\quad >:A\otimes A\rightarrow \mathbb{F}$ defined
by $<a,b>:=\phi(ab)$ is non degenerate (and symmetric).
\begin{definition}\label{def algebre de Frobenius}
An algebra $A$ is a {\it (symmetric) Frobenius algebra}
if there exists an isomorphism  $\Theta:A\buildrel{\cong}\over\rightarrow A^\vee$ of right $A$-modules (respectively of $A$-bimodules).
We call $\phi:=\Theta(1)$ a {\it Frobenius form}.
\end{definition}
\begin{example}
Let $G$ be a finite group then its group algebra $\mathbb{F}[G]$ is a non commutative symmetric Frobenius algebra. By definition, the group ring $\mathbb{F}[G]$ admits the set $\{g\in G\}$ as a basis.
Denote by $\delta_g$ the dual basis in $\mathbb F[G]^\vee$.
The linear isomorphism $\Theta:\mathbb{F}[G]\rightarrow \mathbb{F}[G]^\vee$,
sending $g$ to $\delta_{g^{-1}}$ is an isomorphism of $\mathbb{F}[G]$-bimodules.
\end{example}
Let $A$ be a Frobenius algebra with Frobenius form $\phi$.
By definition~\cite[(16.42)]{Lam:lecturemodulesrings}, the {\it Nakayama automorphism}
of $\phi$ is the unique automorphism of algebras
$\sigma:A\buildrel{\cong}\over\rightarrow A$ such that
for all $a$ and $b\in A$, $\phi(ab)=\phi(\sigma(b)a)$.
Let $\sigma$ and $\sigma'$ be two Nakayama automorphisms of a Frobenius
algebra $A$. Then, by~\cite[(16.43)]{Lam:lecturemodulesrings}, there exists
an invertible element $u\in A$ such that for all $x\in A$,
$\sigma'(x)=u\sigma(x)u^{-1}$. In particular, if $A$ is a symmetric
Frobenius algebra, the identity map of $A$, $id_A:A\rightarrow A$ 
is a particular Nakayama automorphism of $A$. And all the other
Nakayama automorphisms are inner
automorphisms~\cite[p. 483 Lemma (b)]{Lorenz:representationHopf}.
\begin{definition}\label{definition integrale unimodular}
Let $(A,\mu,\eta,\varepsilon)$ be an augmented algebra.
A left (respectively right) {\it integral} in $A$ is an element $l$ of $A$ such
that $\forall h\in A$, $h\times l=\varepsilon (h) l$ (respectively $l\times h=\varepsilon(h) l$).
The augmented algebra $A$ is {\it unimodular} if the set of left integrals
in $A$ coincides with the set of right integrals in $A$.
\end{definition}
\begin{remark}\label{Haar integral bar acyclic}
An element $l$ of $A$ is a right integral in $A$ such that $\varepsilon(l)=1$
if and only if $1_A-l$ is a left unit in $\overline{A}$, the augmentation ideal of $A$.
Suppose that there exists a  right integral $l$ in $A$ such that $\varepsilon(l)=1$.
Then $l$ defines a morphism of right $A$-modules $s:\mathbb{F}\rightarrow A$
such that $\varepsilon \circ s=id_\mathbb{F}$. Therefore $\mathbb{F}$ is a right projective
$A$-module and $\text{Ext}^*_A(\mathbb{F},\mathbb{F})$ is concentrated in degre $0$
(Compare with~\cite[Proof of Proposition 5.4]{Crainic:cylcohhopfalg}).
\end{remark}
The set of right (respectively left) integrals in an augmented Frobenius algebra
is a $\mathbb{F}$-vector space of dimension $1$~\cite[Proposition 6.1]{Kadison:Frobeniusextension}.
Let $A$ be an augmented algebra and let $\Theta:A\buildrel{\cong}\over\rightarrow A^\vee$
be an isomorphism of left (respectively right) $A$-modules.
Then $\Theta^{-1}(\varepsilon)$ is non-zero left (respectively right)
integral in $A$~\cite[just above Proposition 6.1]{Kadison:Frobeniusextension}.
In particular, if $\Theta:A\buildrel{\cong}\over\rightarrow A^\vee$ is an isomorphism of $A$-bimodules, $\Theta^{-1}(\varepsilon)$ is both a non-zero left et right
integral in $A$. Therefore a symmetric Frobenius algebra with an augmentation
is always unimodular.

Let $A$ be a Frobenius algebra with an augmentation. Let $t$ be any non-zero
left integral in $A$.
The {\it distinguished group-like element} or left modular function
~\cite[(6.2)]{Kadison:Frobeniusextension} in $A^\vee$ is the unique morphism
of algebras $\alpha:A\rightarrow\mathbb{F}$ such that for all $h\in
A$,  $t\times h=\alpha(h) t$ (\cite[2.2.3]{Montgomery:Hopfalgactring} or~\cite[p. 590]{Radford:traceHopf}).
Note that $A$ is unimodular if and only if the distinguished group-like element in $A^\vee$ 
is $\varepsilon$ the augmentation of $A$.
\subsection{Hopf algebras}
Let $(H,\mu,\eta,\Delta,\varepsilon,S)$ be a finite dimensional Hopf algebra.
Its dual is also a Hopf algebra $(H^\vee, \Delta^\vee, \varepsilon^\vee,
\mu^\vee,\eta^\vee,S^\vee)$.
In particular, a form $\lambda$ on $H$ is 
a left (respectively right) integral in $H^\vee$ if and only if
for every $\varphi\in H^\vee$ and $k\in H$,
$\sum \varphi(k^{(1)})\lambda(k^{(2)})=\varphi(1_H)\lambda(k)$
(respectively 
$\sum \lambda(k^{(1)})\varphi(k^{(2)})=\varphi(1_H)\lambda (k)$).
Here $\Delta k=\sum k^{(1)}\otimes k^{(2)}$.
\begin{example}\label{exemples d'algebres de Frobenius}\cite[2.1.2]{Montgomery:Hopfalgactring}
If $G$ is a finite group,
$\sum_{g\in G} g$ is both a left and right integral in the group algebra
${\mathbb F}[G]$. And $\delta_1$, the form mapping $g\in G$ to $1$ if $g=1$
and $0$ otherwise, is both a left and right integral in ${\mathbb F}[G]^\vee$.
Since $\delta_1(1)=1$, by Remark~\ref{Haar integral bar acyclic},
$\text{Cotor}_{{\mathbb F}[G]}^*(\mathbb{F},\mathbb{F})$ and
$\text{Ext}_{{\mathbb F}[G]^\vee}^*(\mathbb{F},\mathbb{F})$ are both concentrated in degre $0$
(Note that here the product on $G$ is not used and that $G$ does not need to be finite
~\cite[4. p. 97]{Khalkhali:lecturesnoncommgeom}).
\end{example}
The set of left (respectively right)
integrals in the dual Hopf algebra $H^\vee$
is a ${\mathbb F}$-vector space of dimension $1$~\cite[Corollary 5.1.6 2)]{Sweedler:livre}.
So let $\lambda$ be any non-zero left (respectively right) integral in $H^\vee$.
The morphism of left (respectively right) $H$-modules, $H\buildrel{\cong}\over\rightarrow H^{\vee}$,
sending $g$ to the form, denoted~\cite[p. 95]{Sweedler:livre} $g\rightharpoonup\lambda$, mapping $h$ to $\lambda (hg)$
(respectively to the form mapping $h$ to $\lambda(gh)$), is an
isomorphism~\cite[Proof of Corollary 5.1.6 2)]{Sweedler:livre}.
So a finite dimensional Hopf algebra is always a Frobenius algebra,
but not always a symmetric Frobenius algebra as Theorem~\ref{symmetrique ssi unimodulaire} will show.
\begin{lemma}\label{nakayama right integral}(\cite[Theorem 3(a)]{Radford:traceHopf}
or \cite[(6.8)]{Kadison:Frobeniusextension})
Let $H$ be a finite dimensional Hopf algebra.
Let $\lambda$ be a non-zero right integral in $H^\vee$.
Let $\alpha$ be the distinguished group-like element in $H^\vee$.
Then for all $a$ and $b\in H$,

\noindent i) $
\lambda(ab)=\lambda(S^2(b\leftharpoonup \alpha)a)
$
where $b\leftharpoonup \alpha=\sum \alpha(b^{(1)})b^{(2)}$(\cite[p. 95]{Sweedler:livre}, \cite[p. 585]{Radford:traceHopf}
or \cite[p. 55]{Kadison:Frobeniusextension}),

\noindent ii) In the case $H$ is unimodular, $
\lambda(ab)=\lambda(S^2(b)a)
$~\cite[Proposition 8]{Larson-Sweedler:assorthobil}.
\end{lemma}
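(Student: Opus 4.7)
The plan is to identify the Nakayama automorphism of the Frobenius structure on $H$ induced by the right integral $\lambda$, and show it equals $S^2\circ(-\leftharpoonup\alpha)$. By the discussion just preceding the lemma, the map $\Theta:H\buildrel{\cong}\over\rightarrow H^\vee$, $g\mapsto g\rightharpoonup\lambda$ (with $(g\rightharpoonup\lambda)(h)=\lambda(hg)$) is an isomorphism of right $H$-modules, so $\lambda$ is a Frobenius form. Let $\sigma:H\rightarrow H$ be its Nakayama automorphism, defined by $\lambda(ab)=\lambda(\sigma(b)a)$. Proving (i) amounts to showing $\sigma(b)=S^2(b\leftharpoonup\alpha)$.

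First I would establish, as a preliminary identity, the relationship
$$
\lambda\circ S = \alpha\rightharpoonup\lambda,
$$
where $(\alpha\rightharpoonup\lambda)(h)=\sum\alpha(h^{(1)})\lambda(h^{(2)})$, or an equivalent reformulation. This comes from the fact that $\lambda\circ S$ is a \emph{left} integral in $H^\vee$ (obtained by applying $S$ to both sides of the right-integral equation $\sum\lambda(h^{(1)})h^{(2)}=\lambda(h)1_H$), combined with the uniqueness of left integrals up to scalar and the definition of $\alpha$ via $t\times h=\alpha(h)t$. This is the bridge that brings $\alpha$ into the picture.

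Next I would perform the main Sweedler-notation calculation. Starting from $\lambda(ab)$, I insert the antipode identity $\sum b^{(2)}S(b^{(3)})b^{(4)}=b$ to rewrite
$$
\lambda(ab)=\sum\lambda\bigl(ab^{(1)}S(b^{(2)})b^{(3)}\bigr),
$$
then use the right-integral property $\sum\lambda(x^{(1)})x^{(2)}=\lambda(x)1_H$ on the factor $ab^{(1)}$ to move the tail $S(b^{(2)})b^{(3)}$ across. Applying the preliminary identity to convert the remaining $\lambda\circ S$ into $\alpha\rightharpoonup\lambda$, and using the antipode axiom a second time, the surviving expression collapses to $\lambda(S^2(b\leftharpoonup\alpha)a)$, which is exactly (i). Part (ii) is then immediate: if $H$ is unimodular, $\alpha=\varepsilon$, so $b\leftharpoonup\alpha=\sum\varepsilon(b^{(1)})b^{(2)}=b$.

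The main obstacle is the intermediate identity relating $\lambda\circ S$ to $\alpha$, and in particular getting the direction of the twist correct: depending on conventions one obtains $\alpha$, $\alpha^{-1}$, or $\alpha\circ S$, and only a careful check using both the defining property of $\alpha$ and the fact that $\alpha$ is group-like (hence invertible in $H^\vee$ with $\alpha^{-1}=\alpha\circ S$) pins down the correct expression. Once this is right, the rest is a manipulation of Sweedler indices that is long but mechanical.
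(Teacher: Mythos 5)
There is a genuine gap, and it sits exactly at the step you yourself identify as "the bridge that brings $\alpha$ into the picture". Your proposed preliminary identity $\lambda\circ S=\alpha\rightharpoonup\lambda$, with $(\alpha\rightharpoonup\lambda)(h)=\sum\alpha(h^{(1)})\lambda(h^{(2)})$, cannot be correct: the right-hand side is just the convolution product $\alpha\cdot\lambda$ in $H^\vee$, and since left multiplication preserves the one-dimensional space of right integrals, $\alpha\cdot\lambda=\alpha(g)\,\lambda$ is a \emph{scalar multiple of} $\lambda$, where $g\in H$ is the distinguished group-like element of $H$ (the modular element of $H^\vee$, defined by $\varphi\cdot\lambda=\varphi(g)\lambda$). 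On the other hand $\lambda\circ S$ is a \emph{left} integral in $H^\vee$, which is proportional to the right integral $\lambda$ only when $H^\vee$ is unimodular; for instance for the Sweedler $4$-dimensional Hopf algebra (self-dual and non-unimodular, as recalled in Section 8) the two sides are genuinely different functionals. The derivation you sketch also conflates two distinct objects: uniqueness of left integrals \emph{in $H^\vee$} only tells you $\lambda\circ S$ is proportional to some fixed left integral of $H^\vee$, while $\alpha$ is defined through integrals \emph{in $H$} (via $t\times h=\alpha(h)t$); no amount of adjusting conventions ($\alpha$, $\alpha^{-1}$, $\alpha\circ S$) fixes this, because any character of $H$ convolved with $\lambda$ again gives a scalar multiple of $\lambda$. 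The correct twist relating $\lambda\circ S$ to $\lambda$ involves $g\in H$, not $\alpha\in H^\vee$; consequently your main Sweedler computation has no legitimate entry point for $\alpha$ and cannot "collapse" to $\lambda(S^2(b\leftharpoonup\alpha)a)$ as asserted (that final collapse is in any case only claimed, not carried out). In the actual proofs in the literature, $\alpha$ enters through a nonzero left integral $\Lambda\in H$, satisfying $\Lambda h=\alpha(h)\Lambda$ and $\lambda(\Lambda)\neq 0$, together with the Frobenius-inverse formulas expressing $\Theta^{-1}$ in terms of $\Lambda$; some such ingredient is indispensable.

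For comparison: the paper does not reprove part i) at all — it quotes it from Radford (Theorem 3(a)) and Kadison ((6.8)) — and its only argument is the observation you also make for part ii), namely that unimodularity forces $\alpha=\varepsilon$, hence $b\leftharpoonup\alpha=b$, so ii) follows from i). Your treatment of ii) is therefore fine and matches the paper; it is the attempted self-contained proof of i) that is broken at the key identity. To repair it you would either cite i) as the paper does, or redo the computation of the Nakayama automorphism of $\lambda$ using the left integral $\Lambda\in H$ and the identity $\sum\lambda(a^{(1)}b)a^{(2)}=\sum\lambda(ab^{(1)})S(b^{(2)})$, which is where the standard arguments actually run.
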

We have seen that if $H$ is unimodular, then $ \alpha =\varepsilon $.
Therefore ii) follows from i).
Note that Kadison's distinguished group-like element~\cite[(6.2) or p. 57]{Kadison:Frobeniusextension}
$m$ in $H^\vee$ is $\alpha^{-1}=\alpha\circ S$, the inverse of ours
(\cite[2.2.3]{Montgomery:Hopfalgactring} or\cite[p. 57]{Kadison:Frobeniusextension}),
since he uses right integrals to define it and we use left integrals.
Lemma~\ref{nakayama right integral} means that the Nakayama automorphism $\sigma$ of any non-zero right
integral $\lambda$ in $H^\vee$ is given by $\sigma(b)=S^2(b\leftharpoonup \alpha)$
for any $b\in H$.
\begin{theorem}~\cite{Oberst-Schneider,Farnsteiner,Lorenz:representationHopf,Humphreys:symHopf}\label{symmetrique ssi unimodulaire}
A finite dimensional Hopf algebra $H$ is a symmetric Frobenius algebra if and only if
$H$ is unimodular and its antipode $S$ satisfies $S^2$ is an inner
automorphism of $H$.
\end{theorem}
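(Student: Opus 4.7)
The plan is to establish the two implications separately, in both cases exploiting Lemma~\ref{nakayama right integral} ii), which identifies $S^2$ as the Nakayama automorphism of a nonzero right integral $\lambda\in H^\vee$ whenever $H$ is unimodular.

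For the forward direction, I would begin by observing that unimodularity is already recorded in the preceding exposition: if $\Theta:H\buildrel{\cong}\over\rightarrow H^\vee$ is an $H$-bimodule isomorphism, then $\Theta^{-1}(\varepsilon)$ is simultaneously a nonzero left and right integral. To see that $S^2$ is inner, pick any nonzero right integral $\lambda\in H^\vee$; by Lemma~\ref{nakayama right integral} ii) its Nakayama automorphism is $S^2$. On the other hand, any symmetric Frobenius form $\phi$ admits $\mathrm{id}_H$ as a Nakayama automorphism, since by definition $\phi(ab)=\phi(ba)$. As any two Nakayama automorphisms of a Frobenius algebra differ by conjugation by an invertible element (Lam~\cite[(16.43)]{Lam:lecturemodulesrings}), $S^2$ must itself be inner.

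For the converse, I would assume $H$ is unimodular and that $S^2(h)=u h u^{-1}$ for some invertible $u\in H$, pick a nonzero right integral $\lambda\in H^\vee$, and define $\tau:H\rightarrow\mathbb{F}$ by $\tau(h):=\lambda(uh)$. Non-degeneracy of the bilinear form $(a,b)\mapsto\tau(ab)$ is immediate: it coincides with the non-degenerate form $(a,b)\mapsto\lambda(ab)$ precomposed with the linear bijection $a\mapsto ua$. The trace property $\tau(ab)=\tau(ba)$ is the key calculation, and follows by applying Lemma~\ref{nakayama right integral} ii) with $x=ua$ and $y=b$:
\[
\tau(ab)=\lambda((ua)b)=\lambda(S^2(b)\,ua)=\lambda(u b u^{-1}\,u a)=\lambda(uba)=\tau(ba).
\]
Thus $\tau$ exhibits $H$ as a symmetric Frobenius algebra.

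The main, though still short, technical point is the construction of $\tau$ in the converse direction and the simultaneous verification of its symmetry and non-degeneracy; this is where the hypothesis that $S^2$ is implemented by an \emph{invertible} element gets used in an essential way. The forward implication is essentially a repackaging of standard facts (uniqueness of the Nakayama automorphism up to inner conjugation, and automatic unimodularity of augmented symmetric Frobenius algebras), so the real content of the proof lies in recognising that Lemma~\ref{nakayama right integral} ii) is precisely the tool that converts the inner-automorphism hypothesis on $S^2$ into the bilinear-form identity expressing symmetry of the twisted integral $\tau$.
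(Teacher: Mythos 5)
Your proof is correct and follows essentially the same route as the paper: the forward direction combines automatic unimodularity of an augmented symmetric Frobenius algebra with Lemma~\ref{nakayama right integral}~ii) and the fact that Nakayama automorphisms are unique up to inner conjugation, and the converse twists a nonzero right integral $\lambda$ by the invertible element $u$ implementing $S^2$, exactly as in the paper's form $\beta(h,k)=\lambda(uhk)$. No gaps.
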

\begin{proof}
Suppose that $H$ is a symmetric Frobenius algebra. Then we saw that $H$ is unimodular
and that all its Nakayama automorphisms are inner automorphisms.
By ii) of Lemma~\ref{nakayama right integral}, $S^2$ is a Nakayama automorphism of $H$.

Conversely, assume that $H$ is unimodular and that
$S^2$ is an inner automorphism of $H$.
Let $u$ be an invertible element of $H$
such that
$\forall h\in H$, $S^2(h)=uhu^{-1}$.
Let $\lambda$ be any non-zero right integral in $H^\vee$.
We saw above that $\lambda(ab)$ is a non-degenerate bilinear form on $H$.
By ii) of Lemma~\ref{nakayama right integral}, 
 $
\lambda(ab)=\lambda(S^2(b)a)=\lambda(ubu^{-1}a)
$.
Therefore $\beta(h,k):=\lambda (uhk)$ is a non-degenerate symmetric
bilinear form~\cite[p. 487 proof of Proposition]{Lorenz:representationHopf}.
\end{proof}
\begin{example} Let $G$ be a finite group.
Since $S^2=Id$ and since $\delta_1$ is a right integral for
${\mathbb F}[G]^\vee$, we recover that the
the linear isomorphism $\mathbb{F}[G]\rightarrow\mathbb{F}[G]^\vee$, sending $g$ to
$\delta_1(g-)=\delta_{g^{-1}}$ is an isomorphism of $\mathbb{F}[G]$-bimodules.
\end{example}
The Sweedler algebra is an example of non unimodular Hopf algebra over
any field of characteristic different from $2$~\cite[2.1.2]{Montgomery:Hopfalgactring}.
Notice that a cocommutative Hopf algebra over a field of characteristic different from $0$
can be non unimodular~\cite[p. 487-8, Remark and Examples (1) and (4)]{Lorenz:representationHopf}.

The square of the antipode of every quasi-cocommutative Hopf algebra with bijective antipode
is an inner automorphism
(\cite[Proposition VIII.4.1]{Kassel:quantumgrps} or~\cite[10.1.4]{Montgomery:Hopfalgactring}).
Therefore by Theorem~\ref{symmetrique ssi unimodulaire}, every braided (also called
quasitriangular) unimodular finite dimensional Hopf algebra
is a symmetric Frobenius algebra. In particular, the Drinfeld double $D(H)$ of any
finite dimensional Hopf algebra is a symmetric Frobenius algebra~\cite[Theorem 6.10]{Kadison:Frobeniusextension}.
\begin{theorem}\label{inclusion BV ext hochschild}
Let $H$ be a finite dimensional unimodular (Definition~\ref{definition integrale unimodular})
Hopf algebra equipped with a group-like element $\sigma$ such that $\forall h\in H$,
$S^2(h)=\sigma^{-1}h\sigma$.
Then 1) $H^*(\Phi):\text{Ext}_H^*({\mathbb{F}},{\mathbb{F}})\hookrightarrow HH^*(H,H)$
is an inclusion of Batalin-Vilkovisky algebras.

2) Suppose moreover that $H$ is cosemisimple.
Then $\gamma^*:\widetilde{HC}_{(\varepsilon,\sigma)}^*(H)\rightarrow HC^*_\lambda(H)$
is an inclusion of graded Lie algebras.
\end{theorem}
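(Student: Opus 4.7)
The plan is to apply Theorem~\ref{characteristic de Khalkhali-Rangipour morphisme de Lie} in the case $A=H$, where $H$ is viewed as a right comodule algebra over itself via the coproduct $\Delta$. The key preliminary step is to produce a non-degenerate $\sigma$-invariant $1$-trace $\tau:H\rightarrow\mathbb{F}$. By the hypotheses (unimodular, $S^2$ inner by the group-like $\sigma$), Theorem~\ref{symmetrique ssi unimodulaire} and its proof say that $H$ is a symmetric Frobenius algebra and give the explicit form $\tau(h):=\lambda(\sigma^{-1}h)$, where $\lambda\in H^\vee$ is any non-zero right integral. This $\tau$ is then automatically a non-degenerate symmetric trace; the only thing left to check is the $\sigma$-invariance $\sum\tau(h^{(1)})h^{(2)}=\tau(h)\sigma$, which follows from a short computation: since $\sigma$ is group-like, $\Delta(\sigma^{-1}h)=\sigma^{-1}h^{(1)}\otimes\sigma^{-1}h^{(2)}$, and applying the right-integral identity $\sum\lambda(k^{(1)})k^{(2)}=\lambda(k)1_H$ with $k=\sigma^{-1}h$ and then multiplying by $\sigma$ on the left produces exactly the required formula.

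With these hypotheses verified, Theorem~\ref{characteristic de Khalkhali-Rangipour morphisme de Lie} immediately yields that $H^*(\Phi)$ is a morphism of Batalin-Vilkovisky algebras (the BV-morphism part of (1)) and that $\gamma^*$ is a morphism of graded Lie algebras (the Lie-morphism part of (2)). For the injectivity in part (1), I would identify $H^*(\Phi)$ with the inclusion of Theorem~\ref{ext sous algebre de Gerstenhaber de Hochschild} via Lemma~\ref{morphisme de Gerstenhaber de Ext vers cohomologie de Hochschild}: the proof of that theorem exhibits the chain-level identity $\mathcal{C}^*(H,\varepsilon)\circ\Phi=\mathrm{id}_{B(H)^\vee}$, so $\Phi$ is split injective on cohomology, settling (1) completely.

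For part (2), the cosimplicial retract $\mathcal{C}^*(H,\varepsilon)$ is not a morphism of cocyclic modules, so the injectivity cannot follow by the same argument. The plan is instead to exploit the normalized right integral $\lambda_0\in H^\vee$ provided by cosemisimplicity (so $\lambda_0(1)=1$ and $\sum\lambda_0(h^{(1)})h^{(2)}=\lambda_0(h)1_H$) to build a chain-level section of $\Phi$ that respects the Connes operator $B$. Equivalently, one can compare Connes' SBI long exact sequences of the two cocyclic modules through $\Phi$: on the Hochschild axis the vertical maps $H^*(\Phi):\mathrm{Ext}^n_H\hookrightarrow HH^n(H,H)$ are injective by part (1), so a five-lemma induction on $n$ reduces injectivity of $\gamma^*$ on $\widetilde{HC}^n_{(\varepsilon,\sigma)}(H)$ to injectivity on $\widetilde{HC}^{n-2}_{(\varepsilon,\sigma)}(H)$ together with a surjectivity control on the $B$-connecting morphism; cosemisimplicity is expected to supply this through the averaging operator built from $\lambda_0$.

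The main obstacle is precisely this last point: checking that the averaged retract built from $\lambda_0$ is cyclic-compatible (or equivalently that the $B$-connecting morphism commutes appropriately with $\gamma^*$) is the only step not supplied directly by the quoted theorems, and it requires an explicit combinatorial verification using the integral property of $\lambda_0$ together with the defining identity $S^2(h)=\sigma^{-1}h\sigma$. Since the normalized integral allows one to split $\eta:\mathbb{F}\hookrightarrow H$ as right $H$-comodules, the expected outcome is that the combinatorics do close up, but the care must be in tracking how the cyclic operators from Khalkhali-Rangipour (built from the twisted antipode and $\sigma$) match the cyclic operators on $\mathcal{C}^*(H,H)$ coming from the Frobenius form $\tau$.
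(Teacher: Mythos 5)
Your treatment of part 1) matches the paper's: you construct $\tau(a)=\lambda(\sigma^{-1}a)$ from a non-zero right integral $\lambda\in H^\vee$, verify non-degeneracy via Theorem~\ref{symmetrique ssi unimodulaire} and $\sigma$-invariance by the same computation with $k=\sigma^{-1}a$, apply part 1) of Theorem~\ref{characteristic de Khalkhali-Rangipour morphisme de Lie} with $A=H$, and get injectivity from the identification of $H^*(\Phi)$ with the split inclusion of Theorem~\ref{ext sous algebre de Gerstenhaber de Hochschild}. That part is correct and is essentially the paper's argument.

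Part 2), however, is not proved by your proposal, and the gap you flag yourself is genuine. The four/five-lemma scheme along the SBI sequences cannot work as stated: to deduce injectivity of $\gamma^*$ on $\widetilde{HC}^n$ from the ladder
$HH^{n-1}\rightarrow \widetilde{HC}^{n-2}\rightarrow \widetilde{HC}^{n}\rightarrow HH^{n}$
you would need the map on the Hochschild term $HH^{n-1}$ to be \emph{surjective}, whereas $H^*(\Phi):\text{Ext}^*_H(\mathbb{F},\mathbb{F})\hookrightarrow HH^*(H,H)$ is only a (generally proper) split injection; and the ``cyclic-compatible averaged retract'' that would replace this surjectivity control is exactly what you leave unconstructed. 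The paper closes part 2) by a much shorter route that bypasses any such diagram chase: since right integrals in $H^\vee$ form a one-dimensional space and cosemisimplicity provides one with $t(1)=1$, every non-zero right integral satisfies $\lambda(1)\neq 0$, hence $\tau(\sigma)=\lambda(\sigma^{-1}\sigma)=\lambda(1)\neq 0$; and the non-vanishing (invertibility) of $\tau(\sigma)$ is precisely the hypothesis of Khalkhali--Rangipour's injectivity criterion \cite[Theorem 3.1]{Khalkhali-Rangipour:newcyclic} for the characteristic map of a $\sigma$-invariant trace, which directly gives that $\gamma^*$ is injective. So the missing idea in your proposal is that the relevant splitting already exists in the literature at the level of the cyclic modules, governed solely by the invertibility of $\tau(\sigma)$, and no new combinatorial verification involving $S^2(h)=\sigma^{-1}h\sigma$ is required at this stage.
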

Remark that by~\cite[Exercice 5.5.10]{Dascalescu-Nastasescu-Raianu:hopfalgebras},
a finite dimensional cosemisimple Hopf algebra is always unimodular.
\begin{proof}
By Theorem~\ref{ext sous algebre de Gerstenhaber de Hochschild},
$H^*(\Phi):\text{Ext}^*_H(\mathbb{F},\mathbb{F})\hookrightarrow
HH^*(H,H)$ is an inclusion of Gerstenhaber algebras.
By Theorem~\ref{ext BV algebre}
(or Corollary~\ref{dualite operad cyclic bar cobar}),
$\text{Ext}^*_H(\mathbb{F},\mathbb{F})$ is a Batalin-Vilkovisky algebra
and $\widetilde{HC}_{(\varepsilon,\sigma)}^*(H)$ has a Lie bracket of degre $-1$.
By Theorem~\ref{symmetrique ssi unimodulaire}, $H$ is a symmetric Frobenius
algebra. Therefore by
Corollary~\ref{Cohomologie Hochschild algebre symetrique BV},
$HH^*(H,H)$ is a Batalin-Vilkovisky algebra.
And by Corollary~\ref{crochet sur la Cohomologie cyclic rationelle},
$HC^*_\lambda(H)$ has a Lie bracket of degree $-1$.

More precisely, let $\lambda$ be any non-zero right integral in $H^\vee$.
Let $\tau:H\rightarrow \mathbb{F}$ given by $\tau(a)=\lambda(\sigma^{-1}a)$
for all $a\in H$. In the proof of Theorem~\ref{symmetrique ssi unimodulaire},
we saw that $\tau$ is a non degenerate $1$-trace.
Since $\lambda$ is right integral in $H^\vee$, using the canonical
injection of $H$ into its bidual, for every $k\in H$, $ \lambda(k^{(1)})k^{(2)}=\lambda (k)1_H$.
Here $\Delta k=k^{(1)}\otimes k^{(2)}$.
By taking $k=\sigma^{-1}a$, since $\sigma^{-1}$ is a group-like element,
for all $a\in H$, 
$\tau(a^{(1)})\sigma^{-1}a^{(2)}=\lambda(\sigma^{-1}a^{(1)})\sigma^{-1}a^{(2)}
=\lambda(\sigma^{-1}a)1_H=\tau(a)1_H$. This means that $\tau$ is
$\sigma$-invariant in the sense of~\cite[Definition 3.1]{Khalkhali-Rangipour:newcyclic}. Therefore by applying part 1) of
Theorem~\ref{characteristic de Khalkhali-Rangipour morphisme de Lie}
in the case $A=H$, we obtain that 
$H^*(\Phi):\text{Ext}_H({\mathbb{F}},{\mathbb{F}})\hookrightarrow HH^*(H,H)$
is a morphism of Batalin-Vilkovisky algebras. This is 1).

By~\cite[2.4.6]{Montgomery:Hopfalgactring} or~\cite[Exercice 5.5.9]{Dascalescu-Nastasescu-Raianu:hopfalgebras}, $H$ is cosemisimple means that there exists a right integral
$t$ in $H^\vee$ such that $t (1)=1$.
Since the set of right integrals in $H^\vee$ is a $\mathbb{F}$-vector space
of dimension $1$,
any non-zero right integral $\lambda$ in $H^\vee$ satisfies
$\lambda(1)\neq 0$.
Since $\tau(\sigma)=\lambda(\sigma^{-1}\sigma)=\lambda(1)$ is different
from zero, by~\cite[Theorem 3.1]{Khalkhali-Rangipour:newcyclic},
the morphism of graded Lie algebras given by part 2) of
Theorem~\ref{characteristic de Khalkhali-Rangipour morphisme de Lie},
$\gamma^*:\widetilde{HC}_{(\varepsilon,\sigma)}^*(H)\rightarrow HC^*_\lambda(H)$
is injective. So 2) is proved.
\end{proof}
Note that by Theorem~\ref{symmetrique ssi unimodulaire}, any Hopf algebra
satisfying the hypotheses of Theorem~\ref{inclusion BV ext hochschild}
is a symmetric Frobenius algebra.
On the contrary, any Hopf algebra which is also a symmetric
Frobenius algebra does not necessarily satisfies the hypotheses of Theorem~\ref{inclusion BV ext hochschild}. Indeed, in a symmetric Frobenius Hopf algebra,
$S^2$ is an inner automorphism, not necessarily given by a group-like
element $\sigma$. But in order, to apply Connes-Moscovici (or more precisely
its dual Khalkhali-Rangipour-Taillefer) Hopf cyclic cohomology,
we have to suppose that $\sigma$ is a group-like element.

\section{The Batalin-Vilkovisky algebra $\text{Cotor}^*_{UL}(\mathbb{Q},\mathbb{Q})$}
Let $A$ be a Gerstenhaber algebra. Then $A^1$ is a Lie algebra.
This forgetful functor from Gerstenhaber algebras to Lie algebras,
has a left adjoint (\cite[Theorem 5
p. 67]{Gerstenhaber-Schack:algbqgad}
or~\cite[beginning of Section 4]{Gaudens-Menichi}), namely $L\mapsto \Lambda^* L$
where $\Lambda^* L$ is the exterior algebra on the Lie algebra $L$ equipped
with the {\it Schouten bracket}:
for $x_1\wedge\dots\wedge x_p\in\Lambda^p L$ and
$y_1\wedge\dots\wedge y_q\in\Lambda^q L$,
\begin{multline}\label{Schouten bracket}
\{x_1\wedge\dots\wedge x_p,y_1\wedge\dots\wedge y_q\}=\\
\sum_{1\leq i\leq p,1\leq j\leq q}\pm \{x_i,y_j\}\wedge x_1\wedge\dots\wedge\widehat{x_i}\wedge\dots\wedge
x_p\wedge y_1\wedge\dots\wedge\widehat{y_j}\wedge\dots\wedge y_q.
\end{multline}
Here the symbol $\widehat{\;}$ denotes omission and $\pm$ is the sign
$(-1)^{i+j+(p+1)(q-1)}$.
A tedious calculation shows that more generally in any Gerstenhaber algebra $A$, for $x_1,\dots,x_p, y_1,\dots,y_q\in A$,
\begin{multline*}
\{x_1\dots x_p,y_1\dots y_q\}=\\
\sum_{1\leq i\leq p,1\leq j\leq q}\pm \{x_i,y_j\} x_1\dots\widehat{x_i}\dots
x_p y_1\dots\wedge\widehat{y_j}\dots y_q.
\end{multline*}
where here $\pm$ is the sign
$(-1)^{\vert x_i\vert \vert x_1\dots x_{i-1}\vert+\vert y_j\vert \vert y_1\dots y_{j-1}\vert+(\vert x_1\dots x_p\vert+1)\vert y_1\dots\widehat{y_j}\dots y_q\vert }$.

In particular for any bialgebra $C$, the inclusion
of Lie algebras $P(C)\hookrightarrow\text{Cotor}^*_C({\Bbbk},{\Bbbk})$
given by Property~\ref{cotor en degre un egal primitif comme algebre
  de Lie}
induces an unique morphism of Gerstenhaber algebras
$\varphi_C:\Lambda^* P(C)\rightarrow \text{Cotor}^*_C({\Bbbk},{\Bbbk})$.  
\begin{proposition}~\cite[Theorem 8
  p. 70]{Gerstenhaber-Schack:algbqgad}\label{iso gerstenhaber cotor UL}
Let $L$ be a Lie algebra over the rationals $\mathbb{Q}$.
Consider the universal envelopping algebra $UL$ with its canonical
bialgebra
structure. Then the morphism of Gerstenhaber algebras
$\varphi_{UL}:\Lambda^* L\rightarrow
\text{Cotor}^*_{UL}(\mathbb{Q},\mathbb{Q})$
is an isomorphism.
\end{proposition}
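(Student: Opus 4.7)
The plan is to prove that $\varphi_{UL}$ is an isomorphism of graded commutative algebras; since it is a morphism of Gerstenhaber algebras by construction, and both the Schouten bracket on $\Lambda^* L$ and the Gerstenhaber bracket on $\text{Cotor}^*_{UL}(\mathbb{Q},\mathbb{Q})$ are determined by the Poisson rule from their restrictions to the degree-one generators, this will imply the full statement.

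First, I would verify that $\varphi_{UL}$ is an isomorphism in cohomological degree $1$. By Property~\ref{cotor en degre un egal primitif comme algebre de Lie}, the Lie algebra $\text{Cotor}^1_{UL}(\mathbb{Q},\mathbb{Q})$ coincides with the Lie algebra of primitives $P(UL)$. Over a field of characteristic zero, the Cartier-Milnor-Moore theorem gives $P(UL) = L$ as Lie algebras, and the restriction of $\varphi_{UL}$ to the generators $L \subset \Lambda^* L$ is precisely this identification. Hence $\varphi_{UL}$ is an isomorphism of Lie algebras in degree one.

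Next, I would compute $\text{Cotor}^*_{UL}(\mathbb{Q},\mathbb{Q})$ as a graded commutative algebra. By Poincar\'e-Birkhoff-Witt (in characteristic zero), the symmetrization map $\beta: S(L) \to UL$ sending $x_1 \cdots x_n$ to $\frac{1}{n!}\sum_\sigma x_{\sigma(1)} \cdots x_{\sigma(n)}$ is an isomorphism of coalgebras, where $S(L)$ carries its standard cocommutative bialgebra structure with $L$ primitive. Since both the cobar differential and its cup product depend only on the coalgebra structure, $\beta$ induces an isomorphism of graded commutative algebras $\text{Cotor}^*_{S(L)}(\mathbb{Q},\mathbb{Q}) \cong \text{Cotor}^*_{UL}(\mathbb{Q},\mathbb{Q})$. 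I would then identify $\text{Cotor}^*_{S(L)}(\mathbb{Q},\mathbb{Q})$ with $\Lambda^* L$ via the antisymmetrization map
$$
\alpha: \Lambda^* L \longrightarrow \Omega(S(L)), \quad x_1 \wedge \cdots \wedge x_n \longmapsto \frac{1}{n!} \sum_\sigma \text{sgn}(\sigma)\, x_{\sigma(1)} \otimes \cdots \otimes x_{\sigma(n)}.
$$
Since the $x_i$ are primitive, the reduced diagonal vanishes on them and $\alpha$ lands in cocycles. A short computation shows that the cup product of antisymmetrizations differs from the antisymmetrization of the wedge product by an explicit coboundary (for instance in degree two, $x\otimes y - \alpha(x\wedge y) = \tfrac{1}{2} d(xy)$), so $\alpha$ becomes a morphism of graded algebras after passing to cohomology.

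The final step is to show that $\alpha$ is a quasi-isomorphism, which is essentially Koszul duality between the symmetric and exterior algebras. A clean verification filters $\Omega(S(L))$ by polynomial degree in $S(L)$; the associated graded is a Koszul-type complex whose $E_1$-page identifies with $\Lambda^* L$ tensored with an acyclic factor, and the spectral sequence then collapses to give the required quasi-isomorphism. Once this is in hand, the composite $\Lambda^* L \xrightarrow{H(\alpha)} \text{Cotor}^*_{S(L)}(\mathbb{Q},\mathbb{Q}) \xrightarrow{H(\beta)} \text{Cotor}^*_{UL}(\mathbb{Q},\mathbb{Q})$ is an isomorphism of graded commutative algebras that agrees with $\varphi_{UL}$ on the degree-one generators $L$; since $\Lambda^* L$ is freely generated by these as a graded commutative algebra, the two maps coincide and $\varphi_{UL}$ is an isomorphism. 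The main obstacle is the quasi-isomorphism claim for $\alpha$: while classical, its verification in the cobar-construction formulation requires either the spectral-sequence argument above or a direct comparison with the Chevalley-Eilenberg complex, and some care is needed to identify the cup product on $\text{Cotor}^*_{S(L)}(\mathbb{Q},\mathbb{Q})$ with the wedge product as graded algebras, rather than merely as graded vector spaces.
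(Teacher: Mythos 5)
Your proposal is correct in outline, but it takes a genuinely different technical route from the paper. Both arguments share the same skeleton: reduce $UL$ to its associated graded via the Poincar\'e--Birkhoff--Witt \emph{coalgebra} isomorphism (your symmetrization $\beta:S(L)\to UL$, the paper's $PBW:UL\to\Lambda L$), invoke Koszul duality between the symmetric and exterior algebras to compute $\text{Cotor}_{S(L)}(\mathbb{Q},\mathbb{Q})\cong\Lambda^*(s^{-1}L)$, and conclude by comparing with $\varphi_{UL}$ on the degree-one generators, using that $\Lambda^* L$ is generated in degree $1$ as a commutative algebra. The difference is in how the Koszul duality step is implemented. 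The paper constructs a map \emph{out of} the cobar construction: it takes the shuffle quasi-isomorphism of coalgebras $\lambda_V:\Lambda(sV)\hookrightarrow\overline{B}(\Lambda V)$ (a consequence of the graded Koszul resolution, quoted from F\'elix--Halperin--Thomas), applies the cobar functor (a boundedness remark guarantees $\Omega$ preserves this quasi-isomorphism), and composes with the counit $\Omega\overline{B}A\to A$ to get a strictly multiplicative dg-algebra quasi-isomorphism $\nu_L:\Omega S(L)\to(\Lambda s^{-1}L,0)$; multiplicativity then costs nothing. You instead map \emph{into} the cobar construction by antisymmetrization $\alpha:\Lambda^*L\to\Omega S(L)$, which is only multiplicative up to coboundary (your degree-two identity is right up to the sign convention for the cobar differential) and whose quasi-isomorphism property you must establish separately by the weight filtration/Koszul-complex argument you sketch. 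Your route is more self-contained and elementary, at the price of having to carry out by hand exactly the two points you flag (the quasi-isomorphism of $\alpha$ and the identification of the cup product with the wedge product on cohomology), whereas the paper outsources these to standard rational-homotopy references and gets strict compatibility of products for free; both verifications you defer are classical and the plan you give for them is viable.
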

Since we have not be able to fully understand the proof of
Gerstenhaber
and Schack, we give our own detailed proof of this proposition.
\begin{proof}
Let $V$ be a graded $\mathbb{Q}$-vector space.
Let $\Lambda V$ be the free graded commutative algebra on $V$.
By~\cite[Proposition 22.7]{Felix-Halperin-Thomas:ratht} applied to
$V$ considered as a differentiel graded abelian Lie algebra,
the linear map $\lambda_V:\Lambda (sV),0\buildrel{\simeq}\over\hookrightarrow
\overline{B}(\Lambda V)$, mapping $v_1\wedge\dots\wedge v_n$
to the shuffle product $[v_1]\star \dots\star [v_n]$,
is an injective quasi-isomorphism of differential graded coaugmented
coalgebras (This is a consequence of the (graded) Koszul resolution).

Let $A$ be an augmented algebra. Denote by
$\overline{B}A$ the normalized reduced Bar construction.
Let $\tau_A:s^{-1}\overline{B}A\rightarrow A$ be the linear map
of degre $0$, mapping $s^{-1}[sa_1|\dots|sa_n]$ to $a_1$ if $n=1$
and to $0$ otherwise.
Then the unique morphism of graded algebras
$\Omega \overline{B}A\buildrel{\simeq}\over\rightarrow A$,
extending $\tau_A$ is a quasi-isomorphism~\cite[Proposition
2.14]{Felix-Halperin-Thomas:Adamce}.
Suppose that $V$ is concentrated in negative (lower) degre,
the both $\Lambda sV$ and $\overline{B}(\Lambda V)$
are concentrated in non positive degre.
Therefore, by~\cite[Remark 2.3]{Felix-Halperin-Thomas:Adamce},
the morphism of differential graded algebras
$$\Omega\lambda_V:\Omega(\Lambda (sV),0)\buildrel{\simeq}\over\rightarrow
\Omega\overline{B}(\Lambda V)$$ is a quasi-isomorphism.
Therefore, by composing, we obtain a quasi-isomorphism
of differential graded algebras $$\nu_{sV}:\Omega\Lambda
sV\buildrel{\simeq}\over\rightarrow
\Omega\overline{B}(\Lambda V)\buildrel{\simeq}\over\rightarrow
(\Lambda V,0).$$
(This is a particular case of~\cite[line above Lemma
8.3]{MenichiL:cohrfl}.) Note that if $V$ is finite dimensional,
$\nu_V$ is the dual of $\lambda {sV^\vee}$.

Since $L$ is ungraded, $V=s^{-1}L$ is concentrated in degre $-1$
and therefore, we have the quasi-isomorphism of differential graded
algebras
$\nu_L:\Omega\Lambda L\buildrel{\simeq}\over\rightarrow (\Lambda
s^{-1}L,0)$.
Poincar\'e-Birkoff-Witt gives an isomorphism of coalgebras
$PBW:UL\buildrel{\cong}\over\rightarrow \Lambda L$ which restricts to the
identity map on the primitives.
By definition, $\nu_L:\Omega\Lambda L\rightarrow \Lambda (s^{-1}L)$
extends the composite
$$s^{-1}\Lambda L\buildrel{s^{-1}\lambda_{s^{-1}L}}\over\longrightarrow s^{-1}\overline{B}\Lambda
s^{-1}L\buildrel{\tau_{\Lambda s^{-1}L}}\over\longrightarrow\Lambda s^{-1}L  
$$
which maps $s^{-1}(l_1\wedge\dots\l_n)$ to $s^{-1}l_1$ if $n=1$ and to
$0$ otherwise.
Therefore, we have the commutative diagram
\[\xymatrix
{
\Omega UL\ar[r]^{\Omega(PBW)}_\cong
&\Omega \Lambda L\ar[r]^{\nu_L}_\simeq
& \Lambda s^{-1}L\\
s^{-1}PUL\ar[r]\ar[u]
&s^{-1}P\Lambda L\ar[r]\ar[u]
&s^{-1}L\ar[u]
}\]
where the vertical arrows are the canonical inclusions
and the bottom horizontal maps are the identity maps.
Therefore the inverse of the algebra isomorphism
$$
H_*(\Omega UL)\buildrel{H_*(\Omega PBW)}\over\longrightarrow
H_*(\Omega \Lambda L)\buildrel{H_*(\nu_L)}\over\longrightarrow
\Lambda s^{-1}L
$$
coincides with $\varphi_{UL}$.
\end{proof}
Let $L$ be a Lie algebra. A {\it character} of $L$ is by definition~\cite[Example 5.5]{Crainic:cylcohhopfalg} a morphism of Lie algebras $\delta:L\rightarrow {\Bbbk}$.
Let $A$ be a connected Batalin-Vilkovisky algebra.
Then $B=A^1\rightarrow A^0={\Bbbk}$ is a character for the Lie algebra $A^1$.
Indeed for $a$, $b\in A^1$, since $\{1,b\}=0$ and $\{a,1\}=0$,
by~\cite[Proposition 1.2]{Getzler:BVAlg},
$$B\{a,b\}=\{Ba,b\}\pm \{a,Bb\}=0\pm 0=0.$$
This forgetful functor from connected Batalin-Vilkovisky algebras to Lie algebras equipped with a character has a left adjoint (Compare with~\cite[Freely generated $BV_n$ algebras in Section 4]{Gaudens-Menichi}):

Let $L$ be a Lie algebra equipped with a character
$\delta:L\rightarrow {\Bbbk}$. Then $\lambda.x:=\delta (x) \lambda$ for
$x\in L$ and $\lambda\in {\Bbbk}$ defines a right (and also left) Lie action of $L$
on ${\Bbbk}$.
The differential $d:\Lambda^n L\rightarrow \Lambda^{n+1} L$
of the Chevalley-Eilenberg complex $C_*(L,{\Bbbk})$ is given
by~(\cite[(10.1.3.1)]{LodayJ.:cych} with the opposite differential
or~\cite[7.7.1]{Weibel:inthomalg} for exactly the same differential)
\begin{multline*}
d(x_1\wedge\dots\wedge x_n)=\sum_{i=1}^n(-1)^{i-1}\delta(x_i)
x_1\wedge\dots\wedge\widehat{x_i}\wedge\dots\wedge x_n\\
+\sum_{1\leq i<j\leq n}(-1)^{i+j}\{x_i,x_j\}\wedge x_1\wedge\dots\wedge\widehat{x_i}\wedge\dots\wedge\widehat{x_j}\wedge\dots\wedge x_n.
\end{multline*}
Here the symbol $\widehat{\;}$ denotes omission.
A direct calculation shows that
\begin{multline*}
$$(-1)^p(d(x_1\wedge\dots\wedge x_p\wedge y_1\wedge\dots\wedge y_q)\\
-d(x_1\wedge\dots\wedge x_p) y_1\wedge\dots\wedge y_q\\
(-1)^p x_1\wedge\dots\wedge x_pd(y_1\wedge\dots\wedge y_q)
)
\end{multline*}
is the Schouten bracket on the Gerstenhaber algebra $\Lambda^* L$ defined
by equation~(\ref{Schouten bracket}).
Therefore the Gerstenhaber algebra  $\Lambda^* L$ equipped with the operator $d$
is a Batalin-Vilkovisky algebra.
By induction, one can check that in any Batalin-Vilkovisky algebra $A$,
for $x_1,\dots,x_n\in A$,
\begin{multline*}
B(x_1\dots x_n)=\sum_{i=1}^n(-1)^{\vert x_1\vert+\dots+\vert x_{i-1}\vert}
x_1\dots B(x_i)\dots x_n\\
+\sum_{1\leq i<j\leq n}(-1)^{\vert x_1\vert+\dots+\vert x_{j-1}\vert+\vert x_1\dots x_{i-1}\vert\vert x_i\vert}
\{x_i,x_j\} x_1\dots \widehat{x_i}\dots\widehat{x_j}\dots x_n.
\end{multline*}
It follows easily that
the inclusion of Lie algebras with character $$(L,\delta)\rightarrow
(\Lambda^1 L, d:\Lambda^1 L\rightarrow \Lambda^0 L={\Bbbk})$$
is universal.

Let $C$ be a Hopf algebra endowed with a modular pair in involution of the
form $(\delta,1)$. By~\cite[(2.19) or (2.22)]{Connes-Moscovici:symmetry},
the operator $$B:\text{Cotor}_C^1({\Bbbk},{\Bbbk})=P(C)\rightarrow
\text{Cotor}_C^0({\Bbbk},{\Bbbk})={\Bbbk}$$
coincides with the restriction of $\delta$,
$\delta_{|P(C)}:P(C)\rightarrow {\Bbbk}$.
Since $\delta$ is a character for the associative algebra $C$,
$\delta_{|P(C)}$ is a character for the Lie algebra of primitive elements $P(C)$.
By universal property, the morphism of Gerstenhaber algebra
$\varphi_C:(\Lambda^*P(C),d)\rightarrow \text{Cotor}_C^*({\Bbbk},{\Bbbk})$
is a morphism of Batalin-Vilkovisky algebras between the free Batalin-Vilkovisky
algebra generated by the Lie algebra with character $P(C)$ and the Batalin-Vilkovisky algebra recalled in Corollary~\ref{Cobar algebre de Hopf BV}.
As an immediate consequence of Proposition~\ref{iso gerstenhaber cotor UL},
we obtain the following theorem:
\begin{theorem}\label{calcul BV-algebre cotor algebre envellopante sur Q}
Let $L$ be a Lie algebra over the rationals $\mathbb{Q}$.
Let $\delta:L\rightarrow \mathbb{Q}$ be a character for $L$.
Extend $\delta$ to a character $\delta:UL\rightarrow \mathbb{Q}$ for $UL$.
The morphism of Batalin-Vilkovisky algebras 
$$\varphi_{UL}:(\Lambda^* L,d)\rightarrow \text{Cotor}_{UL}^*(\mathbb{Q},\mathbb{Q})$$
is an isomorphism.
\end{theorem}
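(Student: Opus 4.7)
The plan is to reduce the theorem to Proposition~\ref{iso gerstenhaber cotor UL} plus a purely formal compatibility check on the Batalin-Vilkovisky operator. Proposition~\ref{iso gerstenhaber cotor UL} already supplies the underlying isomorphism of Gerstenhaber algebras $\varphi_{UL}:\Lambda^* L\rightarrow \text{Cotor}^*_{UL}(\mathbb{Q},\mathbb{Q})$, so once I know that $\varphi_{UL}$ intertwines the two operators $B$, the conclusion is automatic: a $\mathbb{Q}$-linear bijection between Gerstenhaber algebras that commutes with $B$ is an isomorphism of Batalin-Vilkovisky algebras.

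For the compatibility, I would invoke the universal property recorded in the discussion immediately preceding the theorem: $(\Lambda^* L,d)$ is the free connected Batalin-Vilkovisky algebra on the Lie algebra with character $(L,\delta)$, the counit of the adjunction being the inclusion $L \hookrightarrow \Lambda^1 L$ with $d:\Lambda^1 L\rightarrow \Lambda^0 L=\mathbb{Q}$ restricting to $\delta$. On the target side, the Batalin-Vilkovisky structure on $\text{Cotor}^*_{UL}(\mathbb{Q},\mathbb{Q})$ provided by Corollary~\ref{Cobar algebre de Hopf BV} yields a Lie algebra with character on $\text{Cotor}^1_{UL}(\mathbb{Q},\mathbb{Q})$, and by Property~\ref{cotor en degre un egal primitif comme algebre de Lie} together with the Connes-Moscovici formulas for $B$ in degree $1$ recalled just above the theorem, this Lie algebra with character is exactly $(P(UL),\delta_{|P(UL)})=(L,\delta)$ (using $P(UL)=L$ for the universal enveloping algebra over a field of characteristic $0$). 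By the universal property there is then a unique morphism of Batalin-Vilkovisky algebras $(\Lambda^* L,d)\rightarrow \text{Cotor}^*_{UL}(\mathbb{Q},\mathbb{Q})$ extending the identity of $L$; both $\varphi_{UL}$ and this universal morphism agree on the generators $L$, so they coincide.

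Combining the two halves, $\varphi_{UL}$ is a morphism of Batalin-Vilkovisky algebras that is simultaneously an isomorphism of the underlying Gerstenhaber algebras, hence an isomorphism of Batalin-Vilkovisky algebras. The only substantive ingredient is Proposition~\ref{iso gerstenhaber cotor UL}, whose proof was already given by a cobar/Koszul argument via $\Omega\Lambda L\xrightarrow{\simeq}(\Lambda s^{-1}L,0)$; the rest of the argument is formal once one records that $B$ acts as $\delta$ in degree $1$, which is the only genuinely BV-theoretic input and is the point I would want to double-check carefully before writing out the proof.
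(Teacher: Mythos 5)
Your proposal is correct and takes essentially the same route as the paper: the theorem is obtained as an immediate consequence of Proposition~\ref{iso gerstenhaber cotor UL}, with the compatibility of $\varphi_{UL}$ with the operators $B$ deduced from the universal property of the free Batalin-Vilkovisky algebra on a Lie algebra with character, once one knows that $B$ on $\text{Cotor}^1_{UL}(\mathbb{Q},\mathbb{Q})=P(UL)=L$ is $\delta_{|L}$. That degree-one identification, the point you flagged for double-checking, is exactly what the paper cites from Connes--Moscovici (formulas (2.19) or (2.22)), so nothing further is needed.
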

In~\cite[Theorem 4.4]{Gaudens-Menichi}, G\'erald Gaudens and the author
showed that the rational homology $H_*(\Omega^2 X;\mathbb{Q})$
of the double loop space of a $2$-connected space  $X$ is isomorphic as
Batalin-Vilkovisky algebras to 
$\left(\Lambda (\pi_*(\Omega X)\otimes\mathbb{Q}),d\right)$,
the free Batalin-Vilkovisky algebra generated by the (graded) Lie algebra
$\pi_*(\Omega X)\otimes\mathbb{Q}$ equipped with the trivial character.
The graded version of Theorem~\ref{calcul BV-algebre cotor algebre envellopante sur Q}
shows that $\left(\Lambda (\pi_*(\Omega X)\otimes\mathbb{Q}),d\right)$ is isomorphic
as Batalin-Vilkovisky algebras to $\text{Cotor}_{H_*(\Omega X;\mathbb{Q})}^*(\mathbb{Q},\mathbb{Q})$.
So finally, we obtain an isomorphism of Batalin-Vilkovisky algebras
$$
H_*(\Omega^2 X;\mathbb{Q})\cong \text{Cotor}_{H_*(\Omega X;\mathbb{Q})}^*(\mathbb{Q},\mathbb{Q}).
$$
Of course, such isomorphism must be compared with our
Conjecture~\ref{conjecture iso homologie lacets double cotor}.
\bibliography{Bibliographie}
\bibliographystyle{amsplain}
\end{document}